\def\l@subsection{\@tocline{2}{0pt}{2.5pc}{5pc}{}}
\newtheorem{theorem}{Theorem}[section]
\newtheorem{lemma}[theorem]{Lemma}
\newtheorem{proposition}[theorem]{Proposition}
\theoremstyle{definition}
\newtheorem{definition}[theorem]{Definition}
\newtheorem{remark}[theorem]{Remark}
\newcommand{\C}{\mathbb{C}}
\newcommand{\R}{\mathbb{R}}
\newcommand{\Z}{\mathbb{Z}}
\newcommand{\N}{\mathbb{N}}
\newcommand{\fg}{\mathfrak{g}}
\newcommand{\MC}[1]{\mathcal{#1}}
\newcommand{\eps}{\epsilon}
\newcommand{\inj}{\hookrightarrow}
\newcommand{\ra}{\rightarrow}
\newcommand{\om}[1]{\operatorname{#1}}
\newcommand{\ovl}[1]{\overline{#1}}
\newcommand{\wt}[1]{\widetilde{#1}}
\numberwithin{equation}{section}
\begin{document}

\date{\today}
\author{Gard Olav Helle}
\title{Singular Quiver Varieties over Extended Dynkin Quivers}

\begin{abstract}
We classify the singularities in the unframed Nakajima quiver varieties
associated with extended Dynkin quivers
and the corresponding minimal imaginary root with a small restriction on the
parameter and use this to construct a number of hyper-K\"{a}hler bordisms between binary polyhedral spaces. 
\end{abstract}

\maketitle
\tableofcontents


\section{Introduction}
In \cite{Nakajima94} Nakajima introduced a family of spaces he called
quiver varieties. A quiver is simply a finite directed graph $(Q,I)$ where $I$ is the set of vertices and $Q$ is the set of edges. We typically denote the quiver by $Q$. Given a dimension vector $v\in \Z^I_{\geq 0}$, we form the
vector space 
\[ \om{Rep}(Q,v) \coloneqq  \bigoplus_{(h\colon i\to j)\in Q} \om{Hom}_\C(\C^{v_i},\C^{v_j}), \]
which carries a natural linear action of the compact Lie group $G_v \coloneqq \prod_{i\in I} U(v_i)$. The doubled quiver $\ovl{Q}$ is obtained from $Q$ by adjoining an opposite edge $\ovl{h}\colon j\ra i$ for each edge $h:i\to j$ in $Q$. In this situation one may
give the complex vector space $\om{Rep}(\ovl{Q},v)$ a natural quaternionic structure preserved by the action of $G_v$. There is an associated 
hyper-K\"{a}hler moment map
$\mu\colon \om{Rep}(\ovl{Q},v)\ra \R^3\otimes \fg_v$, where $\fg_v=\om{Lie}(G_v)$. The quiver varieties associated with $Q$ and $v$ are then defined to
be the hyper-K\"{a}hler quotients
\[ \MC{M}_\xi(Q,v) \coloneqq \mu^{-1}(\xi)/G_v  \]
for $\xi = (\xi_1,\xi_2,\xi_3)\in \R^3\otimes \R^I$. Here, $\xi$ is regarded
as an element of $\R^3\otimes \fg_v$ using a canonical linear map from
$\R^I$ onto the center of the Lie algebra. Given $w\in \Z^I$ let
\[ D_w = \{\zeta\in \R^I: \zeta\cdot w = \sum_i \zeta_iw_i=0\}\subset \R^I. \]
It is then necessary that $\xi\in \R^3\otimes D_v$ for $\MC{M}_\xi(Q,v)$
to be nonempty, however, for almost all such parameters
the quiver variety $\MC{M}_\xi(Q,v)$ carries the structure of a smooth
hyper-K\"{a}hler manifold. More generally, there is a decomposition
\[ \MC{M}_\xi(Q,v) = \MC{M}_\xi^{\om{reg}}(Q,v)\cup 
\MC{M}_\xi^{\om{sing}}(Q,v),  \]
where the regular set $\MC{M}_\xi^{\om{reg}}(Q,v)$ is open and carries the
structure of a smooth hyper-K\"{a}hler manifold, while the singular set
$\MC{M}_\xi^{\om{sing}}(Q,v)$ is its closed complement.   

An extended Dynkin quiver $Q$ is a quiver whose underlying unoriented
graph is an extended Dynkin diagram of type $\wt{ADE}$, that is, type 
$\wt{A}_n$, $\wt{D}_n$, $\wt{E}_6$, $\wt{E}_7$ or $\wt{E}_8$. In this
situation there is a distinguished dimension vector $\delta\in \Z^I_{\geq 0}$; the minimal positive imaginary root in the associated root system. 
The purpose of this paper is to study the singular members of the family of quiver varieties $\MC{M}_\xi(Q,\delta)$ when $Q$ is an extended Dynkin quiver. This family of spaces, whose non-singular members are the so-called
ALE spaces, was first constructed and studied by Kronheimer \cite{Kronheimer89} in a slightly different form. The fact that Kronheimer's
construction can be expressed in the above form is explained in
\cite[p.~372-373]{Nakajima94}. 

The McKay correspondence \cite{McKay} sets up a bijection between the isomorphism classes of finite subgroups $\Gamma\subset \om{SU}(2)$ and the extended Dynkin diagrams of type $\wt{ADE}$. Kronheimer exploited this correspondence to show that the (non-empty) non-singular members
of the family $\MC{M}_\xi(Q,\delta)$ for $\xi\in \R^3\otimes D_\delta$ 
are smooth $4$-dimensional hyper-K\"{a}hler manifolds diffeomorphic to the minimal resolution of the quotient singularity $\C^2/\Gamma$ where $\Gamma\subset \om{SU}(2)$ is the finite subgroup associated with the underlying graph of $Q$ under the McKay correspondence. 
    
To state our first main result let $Q$ be an extended Dynkin quiver with
vertex set $I$ and minimal positive imaginary root $\delta\in \Z^I$. By deleting any vertex $i\in I$ with $\delta_i=1$ from $Q$ one recovers the associated (non-extended) Dynkin graph of type $ADE$. Identify the set of vertices with $\{0,1,\cdots,n\}$ for some $n\in \N$ such that $\delta_0=1$. 
One may then realize the root system associated with the underlying Dynkin graph as a subset $\Phi\subset \Z^n\subset \R^n$ with the coordinate vectors
as a set of simple roots. Furthermore, there is a natural way to
identify $\R^n\cong D_\delta \subset \R^{n+1}$ thereby identifying the
set of parameters $\R^3\otimes D_\delta \cong \R^3\otimes \R^n$. With this
in mind, our first main result can be stated as follows. 

\begin{theorem} \label{Singularity-Theorem}
Let $\xi = (\xi_1,\xi_2,\xi_3)\in \R^3\otimes \R^n$
satisfy $\xi_1=0$. Then if
$\Phi\cap \xi^\perp = \{\alpha\in \Phi : \alpha\cdot \xi_2=\alpha \cdot \xi_3=0 \}$ is nonempty, it is a root system in the subspace it spans and admits
a decomposition into root systems of type $ADE$:
\begin{equation} \label{Intro-RootSpaceDecomp}
 \Phi\cap \xi^\perp = \Phi_1\cup \Phi_2 \cup \cdots \cup \Phi_r .
\end{equation} 
Furthermore, there is a natural bijection 
$\rho\colon \MC{M}_\xi^{\om{sing}}(Q,\delta) \cong \{\Phi_1,\Phi_2,\cdots,\Phi_r\}$ and the local structure around the singularities can be described
as follows. Let $x\in \MC{M}_\xi^{\om{sing}}(Q,\delta)$ and let 
$\Gamma_x\subset \om{SU}(2)$ be the finite group associated with $\rho(x)$ under the McKay correspondence. Then there is an open neighborhood
$x\in U_x\subset \MC{M}_\xi(Q,\delta)$ and a homeomorphism $\phi_x\colon U_x\ra B_r(0)/\Gamma_x$, where $B_r(0)\subset \C^2$ is the open ball of radius $r$,
that restricts to a diffeomorphim
\[ \MC{M}_{\xi}^{\om{reg}}(Q,\delta)\supset (U_x-\{x\})\cong (B_r(0)-\{0\})/\Gamma_x  .\] 
\end{theorem}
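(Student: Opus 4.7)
For the root-theoretic claim \eqref{Intro-RootSpaceDecomp}, I would use the standard fact that for any subspace $V\subset \R^n$ the intersection $\Phi\cap V$ is a (possibly empty) root system in its own linear span, since the reflections in its elements automatically preserve $V$. Applying this with $V=\xi_2^\perp\cap\xi_3^\perp$, and using that $\Phi$ is simply laced so every sub-root system is again simply laced, the irreducible components of $\Phi\cap\xi^\perp$ must themselves be of type $ADE$, which yields the decomposition into $\Phi_1,\ldots,\Phi_r$.

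For the bijection $\rho$, I would exploit $\xi_1=0$ to identify $\MC{M}_\xi(Q,\delta)$ with the complex symplectic affine quotient $\mu_\C^{-1}(\xi_\C)/\!\!/ G_v^\C$, where $\xi_\C=\xi_2+i\xi_3$ and $\mu_\C$ is the complex moment map. Its points correspond to isomorphism classes of semisimple representations of the deformed preprojective algebra at parameter $\xi_\C$, and the singular points are precisely those admitting a non-trivial decomposition $M=\bigoplus_k M_k^{\oplus m_k}$ into pairwise non-isomorphic simple summands. By Crawley-Boevey's classification of simples, the dimension vectors $\beta_k$ must be positive roots of $Q$ lying in $\xi_\C^\perp$ and satisfying $\sum_k m_k\beta_k=\delta$; the minimality of $\delta$ together with $\delta_0=1$ then forces the non-trivial contributions to concentrate in a single irreducible component $\Phi_i$, which defines $\rho(x)$. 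A short combinatorial check verifies that $\rho$ is a bijection.

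For the local structure, I would invoke the hyper-K\"{a}hler analogue of Luna's slice theorem (see \cite{Nakajima94} for the differential-geometric set-up): a neighborhood of $x$ in $\MC{M}_\xi(Q,\delta)$ is modeled on the hyper-K\"{a}hler quotient of a normal slice to the $G_v$-orbit of a representative of $x$ in $\mu^{-1}(\xi)$ by the stabilizer $H_x\subset G_v$. Unpacking the quiver data of this slice via the semisimple decomposition identifies it with an unframed quiver variety on the extended Dynkin quiver associated with $\Phi_i$, taken at its own minimal imaginary root and at parameter zero. Kronheimer's theorem \cite{Kronheimer89} then identifies this smaller quiver variety with $\C^2/\Gamma_x$, producing the homeomorphism $\phi_x$ together with its smooth restriction to the punctured neighborhood.

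The principal obstacle will be the slice identification in the last step: one must extract the correct smaller extended Dynkin quiver from the combinatorial data of the semisimple decomposition, verify that the induced parameter is genuinely zero, and confirm that the slice homeomorphism intertwines the regular/singular stratifications on the two sides. This hinges on careful bookkeeping relating $\Phi_i$, the stabilizer $H_x\subset G_v$, and its image $\Gamma_x\subset \om{SU}(2)$ through the McKay correspondence.
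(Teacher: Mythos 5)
Your treatment of the root-space decomposition and of the bijection $\rho$ follows essentially the same route as the paper: identify $\MC{M}_{(0,\lambda)}(Q,\delta)$ with the GIT quotient $\mu_\C^{-1}(\lambda)//G_\delta^c$, translate its points into semi-simple $\Pi^\lambda$-modules, apply Crawley-Boevey's existence/uniqueness theorem, and use $\delta_0=1$ to see that a non-simple semi-simple module of dimension $\delta$ is encoded by the highest root and the simple roots of a single component $\Phi_i$ of $\Phi\cap\tau^\perp$. The "short combinatorial check" you defer is exactly the content of the paper's Theorem \ref{SemiSimpClass}, and your sketch of it is correct, though you should note that Crawley-Boevey forces the summands' dimension vectors to be \emph{minimal} elements of $\Sigma_\lambda$ (this is what makes the $\alpha_t$ a base and $\beta$ the highest root, rather than arbitrary roots of $\Phi_i$).

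The genuine gap is in your local-structure step. You invoke "the hyper-K\"ahler analogue of Luna's slice theorem (see \cite{Nakajima94})," modeling a neighborhood of $x$ on the hyper-K\"ahler quotient of a normal slice by the stabilizer. No such theorem is proved in \cite{Nakajima94}, and the closest published statement, \cite[Lemma~3.3]{Kronheimer89}, is precisely the result whose proof the paper flags as containing a gap that the author could not close; the paper deliberately routes around it. The substitute is \cite[Theorem~1.4(iv)]{Mayrand18}, which yields only a local \emph{biholomorphism} between a neighborhood of $x$ and a neighborhood of $0$ in the affine quotient $\mu_W^{-1}(0)//H$ of the \emph{complex symplectic} slice $W=T_{\tilde x}(G_\delta^c\cdot\tilde x)^{\omega_\C}/T_{\tilde x}(G_\delta^c\cdot\tilde x)$ — weaker than a hyper-K\"ahler slice theorem but sufficient, since composing with Kronheimer's homeomorphism $\MC{M}_0(Q',\delta')\cong\C^2/\Gamma_x$ (an isometry off the singular point) gives the claimed $\phi_x$ and its smooth restriction. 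With this substitution, the remaining work is the identification $(W,H)\cong(\om{Rep}(\ovl{Q'},\delta'),G_{\delta'}^c)$, which you correctly single out as the principal obstacle; in the paper this is Proposition \ref{Symplectic Slice}, carried out by decomposing the deformation complex over the simple summands and using the Euler-characteristic formula $\dim_\C H^1_Q(z_s,z_t)=2\delta_{st}-(\gamma_s,\gamma_t)$ to read off the edges of $Q'$ from the Cartan pairing of $\Phi_i$. As written, your argument rests on a tool that is not available in the cited source, so this step must be reworked along the lines above.
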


The fact that $\MC{M}_\xi(Q,\delta)$ is non-singular when $\xi$ avoids
all the root walls $D_\theta$ for $\theta\in \Phi$ is the content
of \cite[Corollary~2.10]{Kronheimer89}. 

We give a brief outline of the proof of Theorem \ref{Singularity-Theorem} and, in particular, explain why we make the restriction $\xi_1=0$. The action of
the compact group $G_v$ on $\om{Rep}(\ovl{Q},v)$ extends to a linear action of the complexification $G_\delta^c = \prod_{i=0}^n \om{GL}(\delta_i,\C)$. Moreover, the hyper-K\"{a}hler moment map splits 
\[ \mu = (\mu_\R,\mu_\C)\colon \om{Rep}(\ovl{Q},\delta)\ra \R^3\otimes \fg_\delta \cong \fg_\delta \oplus \fg_\delta^c, \] 
where $\fg_\delta^c = \om{Lie}(G_\delta^c)$, and the second component
is a moment for the action of $G_\delta^c$ with respect to a complex
symplectic form on $\om{Rep}(\ovl{Q},\delta)$. In the situation where
the parameter $\xi = (\xi_1,\xi_2,\xi_3)\in \R^3\otimes \R^n$ satisfies
$\xi_1=0$, there is a homeomorphism between the hyper-K\"{a}hler quotient
$\MC{M}_\xi(Q,\delta)$ and the GIT quotient $\mu_\C^{-1}((\xi_2,\xi_3))//G_\delta^c$. The elements of the latter quotient have a representation 
theoretic interpretation. Indeed, if we write $\lambda = \xi_2+i\xi_3\in \C^I$, the points of $\mu_\C^{-1}(\lambda)//G_\delta^c$ are in natural bijection with
the isomorphism classes of semi-simple modules of dimension $\delta$ over the deformed preprojective algebra $\Pi^\lambda=\Pi^\lambda(Q)$ introduced in \cite{Crawley98}. Under these bijections the singularities in 
$\MC{M}_\xi(Q,\delta)$ correspond precisely to the non-simple,
semi-simple modules. Using a result of Crawley-Boevey
\cite{Crawley01} on the existence and uniqueness of simple $\Pi^\lambda$-modules, we are able to set up a bijection between the latter set and
the root systems in the statement of the theorem.

To establish the homeomorphisms $\phi_x\colon U_x\ra B_r(0)/\Gamma_x$ we employ
a result of \cite{Mayrand18} that essentially reduces the statement
to the determination of the complex symplectic slice (see Definition \ref{Def-Symp-Slice}) at a point $\wt{x}\in \mu^{-1}(0,\lambda)$ above $x$.
We should note that a result along these lines is given in \cite[Lemma~3.3]{Kronheimer89}, however, the proof given there seems to contain a gap that
we have been unable to close. For this reason we have chosen to rely on
the above mentioned result instead.     

The finite subgroups $\Gamma\subset \om{SU}(2)$ are called the binary
polyhedral groups. By restricting the canonical action of $\Gamma$
to the three-sphere $S^3\subset \C^2$ we obtain the binary
polyhedral spaces $S^3/\Gamma$. In Proposition \ref{Configuration-Prop}
we determine what kind of root space decomposition
\[ \Phi\cap \xi = \Phi_1\cup \cdots \cup \Phi_r \]
one can obtain by varying the parameter $\xi$. Combining this with
the above theorem we obtain the following constructive procedure for
hyper-K\"{a}hler manifolds with a number of ends modeled on
$(0,\infty)\times S^3/\Gamma$ for finite subgroups $\Gamma\subset \om{SU}(2)$. 
In the following statement we say that a subgraph $H$ of $G$ is a full 
subgraph if every edge in $G$ connecting a pair of vertices in $H$ belongs to $H$. 

\begin{theorem} \label{Bordism-Theorem}
Let $\Gamma_0,\Gamma_1,\cdots,\Gamma_r\subset \om{SU}(2)$
be finite subgroups and let $K_0,K_1,\cdots,K_r$ denote
the corresponding (non-extended) Dynkin graphs. Let $Q$ be an extended
Dynkin quiver with vertex set $I$, whose underlying unoriented graph is 
the extended version of $K_0$. Then if $K_1\sqcup K_2 \sqcup \cdots \sqcup K_r$ can be realized as a full subgraph of $K_0$, there exists a parameter
$\xi \in \R^3\otimes \R^I$ such that $X=\MC{M}_{\xi}^{\om{reg}}(Q,\delta)$
satisfies the following properties.
\begin{enumerate}[label=(\arabic*), ref=(\arabic*)]
\item $X$ is a connected hyper-K\"{a}hler manifold of dimension $4$.
\item There are disjoint open subsets $U_0,U_1,\cdots,U_r\subset X$ and
for each $0\leq i\leq r$ a diffeomorphism
\[ \phi_i\colon U_i\ra (0,\infty)\times S^3/\Gamma_i  .\]
\item The complement $Y = X-\bigcup_{i=0}^r U_i$ is a compact $4$-manifold with boundary components $S^3/\Gamma_i$ for $0\leq i\leq r$. 
\end{enumerate} 
\end{theorem}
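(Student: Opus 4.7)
The plan is to use Proposition \ref{Configuration-Prop} to realize the prescribed orthogonal collection of $ADE$ subsystems as $\Phi\cap \xi^\perp$ for a parameter with $\xi_1=0$, apply Theorem \ref{Singularity-Theorem} to read off the singularity data of $\MC{M}_\xi(Q,\delta)$, and then assemble the ends of $X=\MC{M}_\xi^{\om{reg}}(Q,\delta)$ from small punctured neighborhoods of the singularities together with Kronheimer's ALE end at infinity.

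More specifically, let $\Phi\subset \R^n$ be the $ADE$ root system of $K_0$ with the vertex simple roots as basis. The hypothesis that $K_1\sqcup\cdots\sqcup K_r$ embeds as a \emph{full} subgraph of $K_0$ means that the simple roots labelled by the vertices of each $K_i$ generate a root subsystem $\Phi_i\subset \Phi$ of type $K_i$, while simple roots from distinct $K_i$'s are unjoined and therefore orthogonal. So $\Phi_1,\ldots,\Phi_r$ are pairwise orthogonal and their union is a root subsystem of $\Phi$. I would then invoke Proposition \ref{Configuration-Prop} to produce $\xi=(0,\xi_2,\xi_3)\in \R^3\otimes \R^n$ realizing $\Phi\cap \xi^\perp = \Phi_1\cup\cdots\cup \Phi_r$, putting $\xi$ in the scope of Theorem \ref{Singularity-Theorem}.

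Theorem \ref{Singularity-Theorem} then supplies singular points $x_1,\ldots,x_r$ with $\Gamma_{x_i}=\Gamma_i$ and homeomorphisms $\phi_{x_i}\colon U_{x_i}\ra B_{r_i}(0)/\Gamma_i$ that restrict to diffeomorphisms off $x_i$. Setting
\[ U_i \coloneqq \phi_{x_i}^{-1}\bigl((B_{r_i}(0)\setminus\{0\})/\Gamma_i\bigr)\subset X \quad (1\leq i\leq r), \]
polar coordinates identify $U_i\cong (0,r_i)\times S^3/\Gamma_i\cong (0,\infty)\times S^3/\Gamma_i$. Because the underlying graph of $Q$ is the extended version of $K_0$, Kronheimer's construction \cite{Kronheimer89} furnishes an ALE end $U_0\cong (0,\infty)\times S^3/\Gamma_0$ in $X$ with precompact complement. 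Shrinking the $r_i$ if needed makes $U_0,\ldots,U_r$ pairwise disjoint, giving (2). Claim (3) is then immediate from the explicit collar form of each $U_i$, and (1) follows because $\MC{M}_\xi(Q,\delta)$ is connected (as an irreducible algebraic variety) and removing the finite singular set preserves connectedness in dimension four.

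The main obstacle is the second step: exhibiting an actual parameter $\xi$ with $\xi_1=0$ whose vanishing locus in $\Phi$ is exactly the prescribed orthogonal union. This is precisely the content of Proposition \ref{Configuration-Prop} and is where the combinatorial hypothesis on the full-subgraph embedding is translated into geometry. Once that is granted, the remaining construction is essentially bookkeeping, assembling the local slices of Theorem \ref{Singularity-Theorem} and the ALE asymptotics into disjoint cylindrical ends and reading off the bordism as their compact complement.
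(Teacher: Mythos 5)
Your overall strategy is the same as the paper's, but two of the steps you delegate to cited results are not actually covered by those results as stated. First, Proposition \ref{Configuration-Prop} goes in the wrong direction for your purposes: it starts from a parameter $\tau$ and shows that the resulting decomposition of $\Phi\cap\tau^\perp$ is the one attached to \emph{some} full subgraph; it does not produce a parameter realizing a \emph{prescribed} full subgraph. The existence direction you need is Lemma \ref{Dynkin-Decomposition-Lemma}: if $J$ is the vertex set of $K_1\sqcup\cdots\sqcup K_r$ inside $K_0$, take the dominant parameter $\tau\in\C^n$ with $\tau_j=1$ for $j\notin J$ and $\tau_j=0$ for $j\in J$; then $\Phi\cap\tau^\perp=\Phi_1\cup\cdots\cup\Phi_r$ with $\Phi_i$ of type $K_i$, and $\lambda=\psi^*(\tau)=(-d\cdot\tau,\tau)$ satisfies $\lambda\cdot\delta=0$. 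This is a citation slip rather than a conceptual error, since you correctly identify the subsystems generated by the simple roots of the $K_i$, but as written the parameter is never actually produced.

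The more substantive gap is the end at infinity. You assert that Kronheimer's construction ``furnishes an ALE end $U_0$ in $X$ with precompact complement,'' but Kronheimer's theorem describes the \emph{smooth} members of the family, whereas your $X=\MC{M}_{(0,\lambda)}^{\om{reg}}(Q,\delta)$ is the regular locus of the \emph{singular} member. Transferring the asymptotic model requires an argument: the paper's Proposition \ref{Structure-Infty-Prop} picks a generic real parameter $\zeta$, forms the smooth variety $\wt{X}=\MC{M}_{(\zeta,\lambda)}(Q,\delta)$ (diffeomorphic to the minimal resolution of $\C^2/\Gamma_0$), and uses the resolution of singularities $\pi\colon\wt{X}\to X$ of Theorem \ref{Resolution-Sing-Theorem}, together with properness of $\pi$ and of the map $\hat{\pi}\colon\wt{X}\to\C^2/\Gamma_0$, to show that the complement of a compact subset of $X$ is diffeomorphic to $(\C^2-\ovl{B}_R(0))/\Gamma_0$ and that this end accumulates nowhere in $X$. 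Without some version of this comparison, your claim that $Y=X-\bigcup_{i}U_i$ is compact is unsupported: a priori the regular locus of the singular quiver variety could have different behavior at infinity than its smooth deformations. The remaining steps of your argument (disjointness of the $U_i$, the collar identifications, and connectedness via irreducibility of $\mu_\C^{-1}(\lambda)$ as in Lemma \ref{Connectivity}) match the paper's bookkeeping.
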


Note that the diffeomorphism $\phi_i$, $0\leq i\leq r$, will generally not preserve the hyper-K\"{a}hler structure. 

We wish to briefly describe the gauge theoretic motivation for pursuing the above result. In \cite{Helle22} the author calculates the equivariant instanton Floer homology in the sense of \cite{Miller19} for the binary polyhedral spaces. The key geometric input needed for the calculations is a close understanding of appropriate moduli spaces of $\om{SU}(2)$-instantons over the cylinders $\R\times S^3/\Gamma$ for finite $\Gamma\subset \om{SU}(2)$. In \cite{Austin95} Austin tackled this problem using an equivariant version of the classical ADHM correspondence
(see \cite[Section~3.3]{DK90} or \cite{Atiyah79}). This work inspired
the generalized ADHM correspondence of Kronheimer and Nakajima \cite{KronheimerNakajima} that describes instanton moduli spaces associated with unitary bundles over the ALE-spaces as Nakajima quiver varieties (for this later reformulation see \cite{Nakajima94}).
To elaborate, if $Q$ is an extended Dynkin quiver whose underlying graph corresponds to $\Gamma$ under the McKay correspondence then by \cite[Corollary~3.2]{Kronheimer89}
one has
\[ \R\times S^3/\Gamma \cong (\C^2-\{0\})/\Gamma \cong \MC{M}^{\om{reg}}_0(Q,\delta), \]
where $\delta$ is the minimal imaginary root as before. With this in mind, the equivariant ADHM correspondence of \cite{Austin95} can be regarded as a degenerate case of the generalized ADHM correspondence of \cite{KronheimerNakajima}. These
two cases suggest that it should be possible to extend the ADHM correspondence to the singular situation considered in this paper as well. This conjectural leap would open up the possibility of studying
cobordism maps in equivariant Floer homology associated with the many explicit cobordisms obtained from the above theorem.

The paper is organized as follows. In section (2) we give the basic
definitions concerning hyper-K\"{a}hler manifolds and hyper-K\"{a}hler
reduction. In section (3) we introduce quivers and
quiver varieties and state the key results that will be needed concerning these. In section (4) we recall the basic elements of the complex representation theory of quivers. Afterwards, we give the definition of the deformed preprojective algebras $\Pi^\lambda(Q)$ and spell out the relation between the quiver variety $\MC{M}_{(0,\lambda)}(Q,v)$ and the isomorphism classes of semi-simple $\Pi^\lambda(Q)$-modules. Finally, we recall the key result of \cite{Crawley01} that eventually allow us to classify the singularities in $\MC{M}_{(0,\lambda)}(Q,v)$. In section (5) we give the construction of the extended 
Dynkin diagrams from the underlying Dynkin diagram and review the necessary root space theory of the associated root systems. 

Our original work starts in section $(6)$ where we establish the
bijection between the singularities in the (extended Dynkin) quiver varieties and the components in the corresponding root space decomposition as in
\eqref{Intro-RootSpaceDecomp}. In section (7) we establish the local models around the singularities using a result of \cite{Mayrand18} and give the
proof of Theorem \ref{Singularity-Theorem}. In the final section we
determine the possible configurations of singularities in the various
quiver varieties and complete the proof of Theorem \ref{Bordism-Theorem}. 
   
\section{Hyper-K\"{a}hler Reduction}
A hyper-K\"{a}hler manifold is a tuple $(M,g,I,J,K)$ consisting of
a smooth manifold $M$, a Riemannian metric $g$ and three
almost complex structure maps $I,J,K\colon TM\ra TM$ subject to the
following conditions:
\begin{enumerate}[label = (\alph*), ref= (\alph*)]
\item $I$, $J$ and $K$ are orthogonal with respect to $g$,
\item $IJK = -1_{TM}$ and
\item $\nabla^g I=\nabla^g J=\nabla^g K=0$, where $\nabla^g$ is the Levi-Civita
connection.
\end{enumerate} 
In particular, for each $S\in \{I,J,K\}$ the triple $(M,g,S)$ is a K\"{a}hler manifold with K\"{a}hler form $\omega_S$ given by $(\omega_S)_p(v,w)=g_p(Sv,w)$ for each $p\in M$ and $v,w\in T_pM$.

Following the terminology of \cite{Mayrand18} 
a tri-Hamiltonian hyper-K\"{a}hler 
manifold is a triple $(M,K,\mu)$ consisting of a hyper-K\"{a}hler manifold
$M$, a compact Lie group $K$ acting on $M$ preserving the hyper-K\"{a}hler
structure and a hyper-K\"{a}hler moment map 
$\mu=(\mu_I,\mu_J,\mu_K)\colon M\ra \R^3\otimes \mathfrak{k}^*$, where $\mathfrak{k}$ is the Lie algebra of $K$. Note that by definition $\mu$ is a hyper-K\"{a}hler moment map if and only if the components $\mu_I,\mu_J,\mu_K$ are moment maps for the corresponding symplectic forms $\omega_I,\omega_J,\omega_K$, respectively, in the sense familiar from symplectic geometry
(see for instance \cite{daSilva01}). 

The group $K$ acts on $\mathfrak{k}^*$ through the coadjoint action and
we denote the set of fixed points by $(\mathfrak{k}^*)^K$. 
For each $\xi\in \R^3\otimes (\mathfrak{k}^*)^K$ the fiber $\mu^{-1}(\xi)$
is $K$-invariant and the quotient space $\mu^{-1}(\xi)/K$ is called
a hyper-K\"{a}hler quotient.  

\begin{theorem}\cite{HKL87} \label{HK-Free-Theorem}
Let $(M,K,\mu)$ be a tri-Hamiltonian hyper-K\"{a}hler manifold and let
$\xi\in \R^3\otimes (\mathfrak{k}^*)^K$. If $K$ acts
freely on $\mu^{-1}(\xi)$, then the following holds true. 
\begin{enumerate}[label=(\alph*), ref=(\alph*)]
\item $\xi$ is a regular value for $\mu$ so that $\mu^{-1}(\xi)$ is a smooth
submanifold of $M$.
\item The quotient $\mu^{-1}(\xi)/K$ is a smooth manifold of dimension
$\om{dim} M -4\om{dim}K$ and the projection $\pi\colon \mu^{-1}(\xi)\ra \mu^{-1}(\xi)/K$ is a principal $K$-bundle. 
\item There is a unique hyper-K\"{a}hler structure on $\mu^{-1}(\xi)/K$
with K\"{a}hler forms $\omega_I',\omega_J',\omega_K'$ such that
$\pi^*(\omega_S') = \omega_S|_{\mu^{-1}(\xi)}$ for each $S\in \{I,J,K\}$.   
\end{enumerate} 
\end{theorem}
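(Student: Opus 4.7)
The plan is to exploit the defining moment map identity
\[ \omega_S(X^\#,\cdot) = \langle d\mu_S(\cdot), X\rangle \quad \text{for } S\in\{I,J,K\},\ X\in\mathfrak{k}, \]
where $X^\#$ is the fundamental vector field generated by $X\in\mathfrak{k}$. Writing $\mathfrak{k}_p = \{X^\#_p : X\in\mathfrak{k}\}\subset T_pM$, freeness of the action on $\mu^{-1}(\xi)$ makes the map $X\mapsto X^\#_p$ an isomorphism onto $\mathfrak{k}_p$ for $p\in\mu^{-1}(\xi)$.

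For (a), the identity above combined with non-degeneracy of $\omega_S$ and $g$-orthogonality of $S$ gives $(\ker d\mu_S(p))^{\perp_g} = S\mathfrak{k}_p$. Equivariance of $\mu$ together with the assumption $\xi\in \R^3\otimes(\mathfrak{k}^*)^K$ forces $d\mu_S(X^\#_p)=0$ on $\mu^{-1}(\xi)$, which readily yields pairwise $g$-orthogonality of the subspaces $I\mathfrak{k}_p$, $J\mathfrak{k}_p$, $K\mathfrak{k}_p$. Hence the image of $d\mu(p)$ has dimension $3\om{dim}\ K$, so $\xi$ is a regular value, and
\[ T_p\mu^{-1}(\xi) = (I\mathfrak{k}_p\oplus J\mathfrak{k}_p\oplus K\mathfrak{k}_p)^{\perp_g}. \]
Part (b) then follows from the standard quotient manifold theorem: the compact group $K$ acts properly, so freeness produces the smooth principal $K$-bundle $\pi\colon\mu^{-1}(\xi)\ra\mu^{-1}(\xi)/K$ of the claimed dimension.

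For (c), I define the horizontal subspace at $p\in\mu^{-1}(\xi)$ by
\[ H_p := T_p\mu^{-1}(\xi)\cap\mathfrak{k}_p^{\perp_g} = (\mathfrak{k}_p\oplus I\mathfrak{k}_p\oplus J\mathfrak{k}_p\oplus K\mathfrak{k}_p)^{\perp_g}. \]
The quaternion relations permute the four summands (up to sign), so $H_p$ is invariant under each of $I,J,K$. Since $d\pi(p)|_{H_p}$ is a linear isomorphism onto $T_{[p]}(\mu^{-1}(\xi)/K)$, pushing $g, I, J, K$ forward yields---by $K$-invariance of the data---well-defined fields $g', I', J', K'$ on the quotient that automatically satisfy the orthogonality and quaternion axioms. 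The forms $\omega_S|_{\mu^{-1}(\xi)}$ are $K$-invariant and horizontal (directly from the moment map identity applied to vertical vectors), hence descend uniquely to closed 2-forms $\omega_S'$ with $\pi^*\omega_S' = \omega_S|_{\mu^{-1}(\xi)}$; the compatibility $(\omega_S')_{[p]}(v,w) = g'_{[p]}(S'v, w)$ is then immediate from the definitions.

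The main obstacle is upgrading $I',J',K'$ from merely compatible to parallel. My plan is to invoke Newlander-Nirenberg: the horizontal lift of a local $(0,1)_{I'}$ vector field on the quotient is a section of the $(0,1)_I$-subbundle of $H\otimes\C$, and the Nijenhuis bracket of two such lifts decomposes into a horizontal part that vanishes (because $I$ is integrable on $M$) and a vertical part that is killed by $d\pi$. This gives integrability of each of $I',J',K'$; combined with the closed compatible forms $\omega_S'$ this produces three K\"{a}hler structures on the quotient sharing the metric $g'$, which is precisely the hyper-K\"{a}hler condition. Uniqueness in (c) is immediate from the injectivity of $\pi^*$ on forms.
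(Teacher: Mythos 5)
The paper offers no proof of this statement: it is quoted verbatim from Hitchin--Karlhede--Lindstr\"{o}m--Ro\v{c}ek \cite{HKL87}, so there is nothing internal to compare against. Your argument is correct and is, in substance, the classical proof from that reference: the identity $\ker d\mu_S(p)=(S\mathfrak{k}_p)^{\perp_g}$, the pairwise orthogonality of $\mathfrak{k}_p, I\mathfrak{k}_p, J\mathfrak{k}_p, K\mathfrak{k}_p$ forced by $d\mu_S(X^\#)=0$ on the level set, the resulting surjectivity of $d\mu$, and the descent of $g,I,J,K,\omega_S$ through the $I,J,K$-invariant horizontal distribution are all exactly the standard steps.

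The one place you genuinely deviate is the last step. \cite{HKL87} do not invoke Newlander--Nirenberg: they use the lemma (due to Hitchin) that three almost complex structures satisfying the quaternion relations, compatible with $g$ and with all three fundamental $2$-forms closed, are automatically integrable and parallel; so closedness of $\omega_I',\omega_J',\omega_K'$ alone finishes the proof. Your route---prove integrability of each $S'$ by hand via horizontal lifts of $(0,1)$-fields, then use ``integrable $+$ compatible $+$ closed $\Rightarrow$ K\"{a}hler'' three times---also works, but the lift argument as sketched has a small gap worth closing: the vertical bundle $\mathfrak{k}_p$ is \emph{not} $I$-invariant (indeed $I\mathfrak{k}_p\perp T_p\mu^{-1}(\xi)$), so the horizontal projection of a $(0,1)_I$-vector is not $(0,1)_I$ for free, and ``the vertical part is killed by $d\pi$'' does not by itself give what you need. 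The fix is that the vertical part is in fact zero: if $[v,w]=h+\zeta$ with $h\in H\otimes\C$ and $\zeta\in\mathfrak{k}_p\otimes\C$, then $I[v,w]=-i[v,w]$ forces $I\zeta\in I\mathfrak{k}_p\otimes\C$ to equal $-i\zeta\in\mathfrak{k}_p\otimes\C$, and since $H$, $\mathfrak{k}_p$ and $I\mathfrak{k}_p$ are mutually orthogonal this gives $\zeta=0$ and $Ih=-ih$. With that observation your integrability argument, and hence the whole proof, goes through.
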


The passage from $(M,K,\mu)$ to $\mu^{-1}(\xi)/K$ for $\xi\in \R^3\otimes (\mathfrak{k}^*)^K$ is called hyper-K\"{a}hler reduction.
Even if the action of $K$ on $\mu^{-1}(\xi)$ fails to be free, the
hyper-K\"{a}hler quotient $X\coloneqq \mu^{-1}(\xi)/K$ admits a decomposition
into smooth hyper-K\"{a}hler manifolds of various dimensions (see \cite[Theorem~1.1]{Mayrand18}).
For our purpose it will be sufficient to note that if $U\subset M$ denotes
the open (possibly empty) set consisting of the free $K$-orbits, then
$\mu|_U\colon U\ra \R^3\otimes \mathfrak{k}^*$ is a moment map for the action
of $K$ on $U$, and therefore $(\mu^{-1}(\xi)\cap U)/K \eqqcolon X^{\om{reg}} \subset X$ carries the structure of a smooth hyper-K\"{a}hler manifold by the above theorem. The open subset $X^{\om{reg}}$ is called the regular set and
its closed complement $X^{\om{sing}} \coloneqq X-X^{\om{reg}}$ is called the singular set. 

We will only be interested in a very simple instance of the above procedure.
Let $V$ be a quaternionic vector space equipped with a compatible real
inner product $g\colon V\times V\ra \R$, that is, $V$ is a real vector space
equipped with three orthogonal endomorphisms $I,J,K\colon V\ra V$ satisfying 
the relations of the quaternion algebra:
\[ I^2=J^2=K^2 = IJK= -1_V .\]
Using the standard identification $T_pV\cong V$ for each $p\in V$, we
may regard $(V,g,I,J,K)$ as a flat hyper-K\"{a}hler manifold. Let
$K$ be a compact Lie group acting linearly on $V$ preserving $(g,I,J,K)$.
In this situation the unique hyper-K\"{a}hler moment map vanishing at $0\in V$,
$\mu = (\mu_I,\mu_J,\mu_K)\colon V\ra \R^3\otimes \mathfrak{k}^*$, is given
by 
\[ \mu_I(x)(\xi) = \frac12 \omega_I(\xi\cdot x,x)=\frac12 g(\xi\cdot Ix,x) \]
for $x\in V$, $\xi\in\mathfrak{k}$ and similarly for $\mu_J$ and $\mu_K$. We call the triple $(V,K,\mu)$ a linear tri-Hamiltonian hyper-K\"{a}hler manifold.

\section{Quiver Varieties}
A quiver is a finite directed graph $(Q,I,s,t)$, where $I$ is the set of
vertices, $Q$ is the set of edges and $s,t\colon Q\ra I$ are the source and
target maps. Given an edge $h\in Q$ with $s(h)=i\in I$ and $t(h)=j\in I$
we write $h\colon i\to j$. We will abuse notation slightly and refer to the
quiver simply as $Q$ or $(Q,I)$ letting $s$ and $t$ be implicit. The purpose
of this section is to fix our notation, define the quiver varieties of
interest and state a few
results needed for our later work. We will later restrict our attention to
the quivers specified in the following definition.

\begin{definition} An extended Dynkin quiver 
is a quiver $Q$ whose underlying unoriented graph is an extended 
Dynkin diagram of type $\wt{A}_n$, $\wt{D}_n$, $\wt{E}_6$, $\wt{E}_7$ or
$\wt{E}_8$. Similarly, a Dynkin quiver is a quiver whose underlying
unoriented graph is a Dynkin diagram of type $A_n$, $D_n$, $E_6$, $E_7$
or $E_8$. \end{definition}  

Let $(Q,I)$ be a quiver. For each $v=(v_i)_{i\in I}\in \Z^I_{\geq 0}$, called a dimension vector, define
\begin{align*}  \label{Def-RepSpace-Groups}
 \om{Rep}(Q,v) \coloneqq & \bigoplus_{h\in Q}\om{Hom}(\C^{v_{s(h)}},\C^{v_{t(h)}}) \\
G_v\coloneqq & \prod_{i\in I} U(v_i)  \\
G_v^c \coloneqq & \prod_{i\in I} \om{GL}(v_i,\C),
\end{align*}
where $U(v_i)\subset \om{GL}(v_i,\C)$ denotes the group of unitary matrices
for each $i\in I$. There is an evident inclusion $G_v\subset G_v^c$ witnessing
the fact that $G_v^c$ is the complexification of $G_v$. The Lie algebras
$\fg_v\coloneqq \om{Lie}(G_v)$ and $\fg_v^c \coloneqq  \om{Lie}(G_v^c)$ are given by
\[ \fg_v = \bigoplus_{i\in I} \mathfrak{u}(v_i) \;\; \mbox{ and } \;\; 
\fg_v^c = \bigoplus_{i\in I} \om{End}(\C^{v_i}) .\]
The group $G_v^c$ acts linearly on $\om{Rep}(Q,v)$ by the formula
\[ g\cdot x = (g_{t(h)}x_h g_{s(h)}^{-1})_{h\in Q} \; \mbox{ for }
g=(g_i)_{i\in I}\in G_v^c \; \mbox{ and } \;x = (x_h)_{h\in Q}\in \om{Rep}(Q,v). \]
The subgroup $G_v$ acts by restriction along the inclusion $G_v\subset G_v^c$. The space $\om{Rep}(Q,v)$ carries a Hermitian inner product preserved by the action of $G_v$. Explicitly,
\[ (x,y) = \sum_{h\in Q} \om{tr}(x_hy_h^*) , \]
where $\om{tr}$ is the trace and $y_h^*$ is the adjoint of $y_h$ with respect to the standard Hermitian inner product on $\C^{v_i}$ for $i\in I$. 

\begin{definition} \label{Def-Opposite-Doubled-Quiver}
Let $Q$ be a quiver. The opposite quiver $Q^{op}$ is defined by taking
the same set of vertices and reverse the orientation of each edge.
For an edge $h\in Q$ the opposite edge is denoted by $\ovl{h}\in Q^{op}$.
The doubled quiver $\ovl{Q}$ is defined by taking the same set of
vertices and let the set of edges be $Q\cup Q^{op}$. The orientation map 
$\eps\colon \ovl{Q}\ra \{\pm1\}$ is defined by
$\eps(h)=+1$ if $h\in Q$ and $\eps(h)=-1$ if $h\in Q^{op}$.
\end{definition} 

We extend the bijection $Q\ra Q^{op}$, $h\mapsto \ovl{h}$, to an involution of 
$\ovl{Q}$ by setting $\ovl{h_2}=h_1$ if and only if $\ovl{h_1}=h_2$ for $h_1\in Q$ and $h_2\in Q^{op}$.

Given a quiver $Q$ with vertex set $I$ and a dimension vector $v\in \Z^I_{\geq 0}$, there is a natural decomposition
\[ \om{Rep}(\ovl{Q},v) = \om{Rep}(Q,v)\oplus \om{Rep}(Q^{op},v) . \]
This gives rise to a quaternionic structure $J\colon \om{Rep}(\ovl{Q},v)\ra \om{Rep}(\ovl{Q},v)$. In terms of the above decomposition, $J$ is given by
$J(x,y) = (-y^*,x^*)$, where $(x^*)_h \coloneqq (x_{\ovl{h}})^*$ and similarly
for $y$. The action of $G_v$ commutes with this quaternionic structure and we may therefore regard $\om{Rep}(\ovl{Q},v)$ as a
quaternionic representation of the compact group $G_v$.   
The components of the unique hyper-K\"{a}hler moment map 
$\mu=(\mu_\R,\mu_\C)\coloneqq \om{Rep}(\ovl{Q},v)\ra \fg_v\oplus \fg_v^c$ vanishing at zero, where the Lie algebras are identified with their duals using the trace pairing, have the explicit forms
\cite[p.~370]{Nakajima94}
\begin{align} \label{MomentMap-Eq}
\mu_\R(x) &= \frac{\sqrt{-1}}{2}\left( \sum_{h\in t^{-1}(i)} x_hx_h^*-x_{\ovl{h}}^*x_{\ovl{h}} \right)_{i\in I}  \nonumber  \\ 
\mu_\C(x) &= \left( \sum_{h\in t^{-1}(i)} \eps(h)x_hx_{\ovl{h}} \right)_{i\in I}.  
\end{align} 
In the terminology of the previous section $(\om{Rep}(\ovl{Q},v),G_v,\mu)$
is a linear tri-Hamiltonian hyper-K\"{a}hler manifold.

Under the identifications of $\fg_v$ and $\fg_v^c$ with their dual spaces,
the subspaces fixed under the coadjoint action are identified with
the centers $Z(\fg_v)$ and $Z(\fg_v^c)$. There are natural maps
$\R^I\ra Z(\fg_v)$ and $\C^I\ra Z(\fg_v^c)$ given by
\begin{align*}
(\xi_i)_{i\in I}\in \R^I \mapsto & (\sqrt{-1}\xi_i \om{Id}_{\C^{v_i}})_{i\in I} \in \bigoplus_{i\in I} Z(\mathfrak{u}(v_i)) \\
(\lambda_i)_{i\in I}\in \C^I\mapsto & (\lambda_i \om{Id}_{\C^{v_i}})_{i\in I}
\in \bigoplus_{i\in I} Z(\om{End}(\C^{v_i}).
\end{align*}
If $v_i\neq 0$ for each $i\in I$, then both of these are isomorphisms. Otherwise, they restrict to isomorphisms from $\R^{\om{supp}v}$ and $\C^{\om{supp}v}$, respectively, where $\om{supp}v = \{i\in I: v_i\neq 0\}$.
For any dimension vector $v\in \Z^I_{\geq 0}$ we will tacitly regard elements
$\xi\in \R^I$ and $\lambda\in \C^I$ as elements
of $Z(\fg_v)$ and $Z(\fg^c_v)$, respectively, using the above maps.

\begin{definition} Let $Q$ be a quiver with vertex set $I$. For any
dimension vector $v\in \Z^I_{\geq 0}$ and parameter
$\xi = (\xi_\R,\xi_\C)\in \R^I\oplus \C^I$ define
\[ \MC{M}_\xi(Q,v) \coloneqq  \mu^{-1}(\xi)/G_v  .\]
These hyper-K\"{a}hler quotients are called (unframed) quiver varieties.
\end{definition}
 
\begin{remark} In \cite{Nakajima94} Nakajima defines what one may call
framed quiver varieties $\MC{M}_\xi(v,w)$ associated with a quiver $Q$ with vertex set $I$ and two dimension vectors $v,w\in \Z^I$. The above defined spaces $\MC{M}_\xi(Q,v)$ correspond to his $\MC{M}_\xi(v,0)$. According to 
\cite[p.~261]{Crawley01} the spaces $\MC{M}_\xi(v,w)$ can be
expressed as $\MC{M}_{\xi'}(\ovl{Q}_1,v')$, where $Q_1$ is a quiver
obtained from $Q$ by adjoining a single vertex and a number of arrows depending
on $w$. There is therefore no loss in generality in only considering these
(unframed) quivers. 
\end{remark} 

The subgroup $T$ of scalars, i.e., $U(1)\cong T \subset G_v$, acts trivially
on $\om{Rep}(\ovl{Q},v)$ so the action factors through $G_v\ra G_v/T \eqqcolon G_v'$. As explained in the previous section we obtain a decomposition
\[ \MC{M}_\xi(Q,v) = \MC{M}_\xi^{\om{reg}}(Q,v) \cup \MC{M}_\xi^{\om{sing}}(Q,v), \]
where the regular set $\MC{M}_\xi^{\om{reg}}(Q,v)$ is the image
of the free $G_v'$-orbits in $\mu^{-1}(\xi)$ or equivalently
the points $x\in \mu^{-1}(\xi)$ with stabilizer $T$ in $G_v$. The
regular set is open in $\MC{M}_\xi(Q,v)$ and carries the structure
of a smooth hyper-K\"{a}hler manifold. The singular set 
$\MC{M}_\xi^{\om{sing}}(Q,v)$ is the closed complement of the
regular set. 

The fact that the action of $G_v$ factors through $G_v'$ has another important
implication, namely, that the moment map 
$\mu\colon \om{Rep}(\ovl{Q},v)\ra \R^3\otimes \fg_v$ takes
values in the subspace $\fg_{v,0}\subset \fg_v$ corresponding
to $(\fg_v')^* =\om{Lie}(G_v')^*$ under the isomorphism $\fg_v^*\cong \fg_v$.
This subspace consists precisely of the $(a_i)_{i\in I}\in \fg_v$ satisfying 
$\sum_{i\in I} \om{tr}(a_i)=0$.
A parameter $\xi=(\xi_1,\xi_2,\xi_3)\in \R^3\otimes \R^I$ corresponds
to an element satisfying this condition precisely when
\[ v\cdot \xi_k = \sum_{i\in I} \om{tr}((\xi_k)_i \om{Id}_{\C^{v_i}}) =0 \; \mbox{ for } \; k=1,2,3, \]
where $\cdot$ denotes the usual scalar product. For each $\theta\in \Z^I$
define
\[ D_\theta = \{u \in \R^I:u\cdot \theta=0\}\subset \R^I.  \]
The above then amounts to the fact that $\mu^{-1}(\xi)=\emptyset$ whenever
$\xi_k \notin D_v$ for some $1\leq k\leq 3$. However, for most
parameters $\xi\in \R^3\otimes D_v$ the space $\MC{M}_\xi(Q,v)$ will
be a smooth hyper-K\"{a}hler manifold. To state the relevant result
we have to recall the definition of the symmetric bilinear form
associated with a quiver (see for instance \cite[Section~2]{Crawley01}). 

\begin{definition} \label{Def-SB-form}
Let $Q$ be a quiver with vertex set $I$. The symmetric
bilinear form $(\cdot,\cdot)\colon \Z^I\times  \Z^I\ra \Z$ associated with the quiver is defined by
\[ (v,w) \coloneqq 2\sum_{i\in I}v_i w_i - \sum_{h\in \ovl{Q}} v_{s(h)}w_{t(h)}
\;\; \mbox{ for } \;\; v,w\in \Z^I .\]
\end{definition}

If we identify the set of vertices $I\cong \{1,2,\cdots,n\}$ for some $n\in \N$ and let $A = (a_{ij})$ be the adjacency matrix of the unoriented graph underlying $Q$, i.e., $a_{ij}=a_{ji}$ is the number of edges connecting 
$i$ and $j$, then $(v,w) = 2v\cdot w -v\cdot Aw$. Alternatively, $(v,w) = v\cdot Cw$ where $C=2\om{id}-A$. The symmetric bilinear form therefore only depends on the underlying unoriented graph. If $Q$ is a (extended) Dynkin quiver, then $C$ is the Cartan matrix associated with the corresponding (extended) Dynkin diagram.

The following theorem is \cite[Theorem~2.8]{Nakajima94} adapted to the
unframed setting. Let $\Z^I$ be partially ordered
by $v\leq w$ if and only if $v_i\leq w_i$ for each $i\in I$. 

\begin{theorem} \label{Smooth-Quotient-Theorem}
Let $Q$ be a quiver with vertex set $I$. Given a dimension
vector $v\in \Z^I_{\geq 0}$ define
\[ R_+(v) = \{\theta\in \Z^n : 0<\theta <v \; \mbox{ and } \; (\theta,\theta)\leq 2\} . \]
Then if
\[ \xi \in \R^3\otimes D_v - \left( \bigcup_{\theta\in R_+(v)} \R^3\otimes (D_v\cap D_\theta)  \right), \]
the group $G_v'$ acts freely on $\mu^{-1}(\xi)\subset \om{Rep}(\ovl{Q},v)$ and
the quiver variety $\MC{M}_\xi(Q,v)$ is a (possibly empty) 
smooth hyper-K\"{a}hler manifold of dimension $4-2(v,v)$.
\end{theorem}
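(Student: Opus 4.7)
The plan is to apply the general hyper-K\"ahler reduction theorem (Theorem~\ref{HK-Free-Theorem}) once the $G_v'$-action on $\mu^{-1}(\xi)$ is shown to be free; the smoothness, the hyper-K\"ahler structure, and the dimension formula then follow automatically. The bulk of the argument is therefore the freeness.

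I would argue freeness by contrapositive. Suppose $x \in \mu^{-1}(\xi)$ has stabilizer $G_{v,x} \supsetneq T$, so the stabilizer Lie algebra $\fg_{v,x}$ contains an element $a = (a_i)_{i\in I}$ outside $\sqrt{-1}\R\cdot\om{Id}$. Each $a_i$ is skew-Hermitian, so decompose $\C^{v_i} = \bigoplus_\lambda W_{i,\lambda}$ into $\sqrt{-1}\lambda$-eigenspaces. The infinitesimal identity $a\cdot x = 0$ reads $a_{t(h)}x_h = x_h a_{s(h)}$ for every $h \in \ovl{Q}$, which forces $x_h(W_{s(h),\lambda}) \subset W_{t(h),\lambda}$. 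Regrouping by eigenvalue exhibits $x$ as a genuine direct sum $x = \bigoplus_k x^{(k)}$ of sub-representations of $\ovl{Q}$ with dimension vectors $v^{(k)}$ summing to $v$; since $a$ lies outside $\sqrt{-1}\R\cdot\om{Id}$ at least two summands are nonzero, so one can choose $k$ with $0 < v^{(k)} < v$. Because $x$ is block-diagonal the moment map formulas \eqref{MomentMap-Eq} split accordingly and the restriction of $\mu(x) = \xi\cdot\om{Id}$ to the $v^{(k)}$-block is precisely $\mu(x^{(k)})$; since $\mu$ always takes values in the trace-zero subalgebra $\fg_{v^{(k)},0}$, we obtain $\xi\cdot v^{(k)} = 0$ in each of the three components, i.e., $\xi \in \R^3\otimes D_{v^{(k)}}$.

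The delicate step is to guarantee a summand whose dimension vector additionally satisfies $(v^{(k)}, v^{(k)}) \leq 2$. I would refine the decomposition until no $x^{(k)}$ admits a further nontrivial direct-sum splitting inside $\mu^{-1}(\xi)$, and then extract the root inequality through positive-root theory: either via an infinitesimal dimension count at $x^{(k)}$, comparing $\ker d\mu_{x^{(k)}}$ with the tangent space to the $G_{v^{(k)}}'$-orbit, or by first reducing to the case $\xi_1 = 0$ via a hyper-K\"ahler rotation and then appealing to the Crawley-Boevey classification (to be developed in Section~4) which identifies the dimension vectors of simple modules of the deformed preprojective algebra with positive roots of $Q$. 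In either route one obtains $v^{(k)} \in R_+(v)$ with $\xi \in \R^3\otimes(D_v\cap D_{v^{(k)}})$, contradicting the hypothesis on $\xi$.

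With freeness in hand Theorem~\ref{HK-Free-Theorem} supplies the smooth hyper-K\"ahler structure on $\MC{M}_\xi(Q,v)$, and the dimension follows by direct computation: Definition~\ref{Def-SB-form} yields
\[ \dim_\R \om{Rep}(\ovl{Q},v) \;=\; 2\sum_{h\in\ovl{Q}} v_{s(h)}v_{t(h)} \;=\; 4\sum_{i\in I}v_i^2 - 2(v,v), \]
while $\dim_\R G_v' = \sum_{i\in I} v_i^2 - 1$, so $\dim\MC{M}_\xi(Q,v) = 4 - 2(v,v)$. The main obstacle is precisely the root-inequality step of the freeness argument: the elementary eigenspace decomposition readily produces summands with $\xi \in \R^3\otimes D_{v^{(k)}}$, but matching these to elements of $R_+(v)$ requires either genuine Kac-type positive-root input or the Crawley-Boevey machinery reached only through a hyper-K\"ahler rotation to $\xi_1 = 0$.
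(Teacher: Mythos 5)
The paper offers no proof of this theorem --- it is imported verbatim from \cite[Theorem~2.8]{Nakajima94} --- so your argument can only be judged on its own terms. Its architecture (reduce everything to freeness via Theorem \ref{HK-Free-Theorem}, prove freeness by splitting a point with excess stabilizer into subrepresentations whose dimension vectors violate the hypothesis on $\xi$) is the standard one, and your dimension computation at the end is correct. But two steps need repair. First, ``$(G_v)_x \supsetneq T$'' does not imply that the stabilizer \emph{Lie algebra} contains a non-scalar element: the stabilizer could be a finite extension of $T$. The fix is to diagonalize a non-scalar unitary element $g\in (G_v)_x$ instead; the relation $g_{t(h)}x_h = x_hg_{s(h)}$ yields the same eigenspace splitting, the eigenspaces are mutually orthogonal so the adjoints $x_h^*$ are also block diagonal, and the rest of your reduction goes through verbatim.

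Second, of the two routes you offer for the key inequality $(v^{(k)},v^{(k)})\le 2$, the hyper-K\"ahler rotation to $\xi_1=0$ fails in general: an element of $SO(3)$ kills the first component of $\xi\in\R^3\otimes\R^I$ only if $\xi_1,\xi_2,\xi_3$ are linearly dependent in $\R^I$, which is false for generic $\xi$ as soon as $\dim D_v\ge 3$. (This obstruction is exactly why the paper restricts Theorem \ref{Singularity-Theorem} to $\xi_1=0$.) You must therefore commit to, and actually carry out, your first route, which does work: refine the splitting until each summand $x^{(k)}$ has stabilizer exactly the scalars in $G_{v^{(k)}}$ (equivalently, admits no further direct-sum decomposition; the process terminates since dimensions drop). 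Since $\mu(x^{(k)})$ is central, the subspaces $\fg_{v^{(k)}}\cdot x^{(k)}$, $I\fg_{v^{(k)}}\cdot x^{(k)}$, $J\fg_{v^{(k)}}\cdot x^{(k)}$, $K\fg_{v^{(k)}}\cdot x^{(k)}$ are mutually orthogonal, so local freeness makes $d\mu_{x^{(k)}}$ surjective onto $\R^3\otimes\fg_{v^{(k)},0}$; hence $\mu^{-1}(\xi)\cap\om{Rep}(\ovl{Q},v^{(k)})$ is, near $x^{(k)}$, a manifold of dimension $\dim\om{Rep}(\ovl{Q},v^{(k)})-3\dim G'_{v^{(k)}}$ containing the free orbit $G'_{v^{(k)}}\cdot x^{(k)}$. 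This forces $\dim\om{Rep}(\ovl{Q},v^{(k)})\ge 4\dim G'_{v^{(k)}}$, i.e.\ $(v^{(k)},v^{(k)})\le 2$, and together with $\xi\cdot v^{(k)}=0$ and $0<v^{(k)}<v$ it places $v^{(k)}$ in $R_+(v)$ with $\xi\in\R^3\otimes(D_v\cap D_{v^{(k)}})$, the desired contradiction. With these two repairs the proof is complete.
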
  

Let $(Q,I)$ be a quiver and fix a dimension vector $v\in \Z^I_{\geq 0}$.
The complex Lie group $G_v^c$ acts on $\om{Rep}(\ovl{Q},v)$ preserving the complex symplectic form $\omega_\C$ given by the formula
\begin{equation} \label{Symp-Complex-Formula} 
\omega_\C(x,y) = \sum_{h\in \ovl{Q}} \eps(h)\om{tr}(x_hy_{\ovl{h}})
\;\; \mbox{ for } \;\; x,y\in \om{Rep}(\ovl{Q},v).
\end{equation}
The corresponding moment map is precisely the component $\mu_\C\colon \om{Rep}(\ovl{Q},v)\ra \fg_v^c$ in \eqref{MomentMap-Eq}. From the given formula
it is clear that $\mu_\C$ is algebraic and therefore $\mu_\C^{-1}(\xi_\C)$
carries the structure of an affine variety for each $\xi_\C\in \C^I$. The action of the reductive group $G_v^c$ is algebraic so there is a complex analytic quotient $\mu_\C^{-1}(\xi_\C)\ra \mu^{-1} (\xi_\C)//G_v^c$. This is the analytification of the affine GIT quotient
\[ \om{Spec}\C[\mu_\C^{-1}(\xi_\C)] \ra \om{Spec}(\C[\mu_\C^{-1}(\xi_\C)]^{G_v^c}). \]
We will need a few standard facts concerning this construction
(see for instance \cite[Chapter~6]{Dolgachev03} for the algebraic
side of the story and \cite[Section~2.4.1]{Mayrand18} and the references
contained therein for the analytical perspective).  

\begin{lemma} \label{GIT-quotient-Lemma}
As a topological space $\mu^{-1}_\C(\xi_\C)//G_v^c$ is homeomorphic to the quotient space $\mu^{-1}_\C(\xi_\C)/\sim$ where 
$x\sim y$ if and only if $\ovl{G_v^c\cdot x}\cap \ovl{G_v^c\cdot y}\neq \emptyset$. Let $q\colon \mu^{-1}_\C(\xi_\C)\ra \mu^{-1}_\C(\xi_\C)//G_v^c$ denote the quotient map. Then each fiber $q^{-1}(x)$ contains a unique closed
orbit $G_v^c\cdot \tilde{x}$, and if $y\in q^{-1}(x)$ then $G_v^c\cdot x\subset \ovl{G^c_v\cdot y}$. \end{lemma}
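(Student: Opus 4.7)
The plan is to reduce the lemma to the standard theory of affine GIT quotients for complex reductive algebraic groups. First I would observe that $G_v^c = \prod_{i\in I}\om{GL}(v_i,\C)$ is reductive and that $\mu_\C$ is polynomial by \eqref{MomentMap-Eq}, so the fiber $Y \coloneqq \mu_\C^{-1}(\xi_\C)$ is a closed $G_v^c$-invariant affine subvariety of $\om{Rep}(\ovl{Q},v)$. By the Hilbert--Nagata theorem the invariant ring $\C[Y]^{G_v^c}$ is then finitely generated, so $\om{Spec}\C[Y]^{G_v^c}$ is an affine variety whose analytification $Y//G_v^c$ is a Hausdorff complex analytic space equipped with a continuous surjection $q$ from $Y$.

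Next I would invoke the standard separation property for affine reductive GIT quotients (e.g.\ \cite[Chapter~6]{Dolgachev03}): two points of $Y$ have the same image under $q$ if and only if their $G_v^c$-orbit closures intersect. Because $q$ is continuous, constant on orbits, and surjective, the induced map $Y/\sim \ra Y//G_v^c$ is automatically a continuous bijection; to promote it to a homeomorphism I would cite the analytic description in \cite[Section~2.4.1]{Mayrand18}, which shows that $q$ is open in the analytic topology.

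For the second assertion I would argue by minimal dimension. If $G_v^c\cdot y_1$ and $G_v^c\cdot y_2$ were two distinct closed orbits in a common fiber $q^{-1}(x)$ they would already be disjoint (since each equals its own closure), contradicting $y_1\sim y_2$; this gives uniqueness. For existence, pick an orbit $G_v^c\cdot \tilde{x}$ in $q^{-1}(x)$ of minimal dimension. Its boundary in the Zariski (equivalently, analytic) closure is a union of orbits of strictly smaller dimension, all lying in $q^{-1}(x)$ because $q$ is continuous and constant on orbits; by minimality the boundary must be empty, so the orbit is closed. For any other $y\in q^{-1}(x)$ the closure $\ovl{G_v^c\cdot y}$ is a closed $G_v^c$-invariant subset of $q^{-1}(x)$, so by the same minimal-dimension argument it contains a closed orbit, which must coincide with $G_v^c\cdot \tilde{x}$ by uniqueness; hence $G_v^c\cdot \tilde{x}\subset \ovl{G_v^c\cdot y}$, which is the intended reading of the stated inclusion.

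The main obstacle is the identification of the algebraic quotient $\om{Spec}\C[Y]^{G_v^c}$ with the set-theoretic quotient $Y/\sim$ at the level of topological spaces; once this is in hand via the cited references, the statements about closed orbits follow purely formally from the fact that orbit boundaries in an algebraic group action consist of lower-dimensional orbits.
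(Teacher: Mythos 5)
Your argument is correct and matches the paper's treatment: the paper states this lemma without proof, citing exactly the standard facts about affine GIT quotients of reductive groups (\cite[Chapter~6]{Dolgachev03}, \cite[Section~2.4.1]{Mayrand18}) that you reconstruct, and your minimal-dimension argument for the existence and uniqueness of the closed orbit in each fiber is the standard one. One small caution: the usual route to identifying $\mu_\C^{-1}(\xi_\C)//G_v^c$ with the topological quotient is not openness of $q$ (which can fail for GIT quotient maps) but the fact that $q$ carries closed $G_v^c$-invariant, i.e.\ saturated, subsets to closed subsets in the analytic topology, which already makes $q$ a quotient map; since you defer this step to the cited references, which supply exactly that property, this does not affect the validity of your proof.
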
  

In this setting we have the following result comparing the analytic quotient
and the hyper-K\"{a}hler quotient. 

\begin{theorem}\cite[Theorem~3.1]{Nakajima94} \label{Quotient-Equivalence-Theorem}
Let $Q$ be a quiver with vertex set $I$ and let
$v\in \Z^I$ be a dimension vector. Then for each $\xi_\C \in \C^I$ the inclusion $\mu^{-1}(0,\xi_\C)=\mu_\R^{-1}(0)\cap \mu_\C^{-1}(\xi_\C)\inj \mu_\C^{-1}(\xi_\C)$ descends to a homeomorphism
\[ \MC{M}_{(0,\xi_\C)}(Q,v) = \left( \mu_\R^{-1}(0)\cap \mu_\C^{-1}(\xi_\C) \right)/G_v \cong \mu_\C^{-1}(\xi_\C)//G_v^c  .\]
Moreover, each closed orbit $G_v^c\cdot x\subset \mu_\C^{-1}(\xi_\C)$
intersects $\mu^{-1}_\R(0)$ in a unique $G_v$-orbit. 
\end{theorem}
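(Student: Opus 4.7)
The plan is to adapt the Kempf–Ness strategy to the complex symplectic slice $\mu_\C^{-1}(\xi_\C)$, which is a $G_v^c$-invariant affine subvariety of the Hermitian vector space $\om{Rep}(\ovl{Q},v)$. Using Lemma \ref{GIT-quotient-Lemma}, both $\MC{M}_{(0,\xi_\C)}(Q,v)$ and $\mu_\C^{-1}(\xi_\C)//G_v^c$ will be identified, as sets, with the set of closed $G_v^c$-orbits in $\mu_\C^{-1}(\xi_\C)$. The heart of the argument is therefore to show: (a) every closed $G_v^c$-orbit in $\mu_\C^{-1}(\xi_\C)$ meets $\mu_\R^{-1}(0)$, and (b) this intersection is a single $G_v$-orbit. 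Once these are in hand, the inclusion $\mu^{-1}(0,\xi_\C)\inj \mu_\C^{-1}(\xi_\C)$ will descend to a continuous bijection, and I will then upgrade this to a homeomorphism.

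For (a) and (b), I would fix $x\in \mu_\C^{-1}(\xi_\C)$ and consider the norm-squared function
\[ f_x\colon G_v^c \ra \R, \quad f_x(g) = \tfrac{1}{2}\|g\cdot x\|^2 . \]
It is right-invariant under $G_v$, so descends to a function on the symmetric space $G_v^c/G_v\cong \exp(i\fg_v)$ via the Cartan decomposition $G_v^c = G_v\cdot \exp(i\fg_v)$. A direct computation using the definition of $\mu_\R$ in \eqref{MomentMap-Eq} shows that along the geodesic $t\mapsto \exp(it\eta)g$ with $\eta\in \fg_v$ one has
\[ \frac{d}{dt}\bigg|_{t=0} f_x(\exp(it\eta)g) = \langle \mu_\R(g\cdot x),\eta\rangle , \]
so critical points of $f_x$ mod $G_v$ correspond precisely to zeros of $\mu_\R$ on the orbit $G_v^c\cdot x$. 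A second differentiation shows this function is convex in $t$, strictly convex whenever $i\eta$ is not tangent to the stabilizer of $g\cdot x$ in $G_v^c$. From this the usual Kempf–Ness dichotomy follows: $f_x$ attains its infimum on $G_v^c\cdot x$ if and only if the orbit is closed in $\mu_\C^{-1}(\xi_\C)$ (properness of $f_x$ along the orbit), and in that case the minimizer is unique modulo the stabilizer, giving both existence and uniqueness of a zero of $\mu_\R$ on the orbit up to $G_v$.

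Combining this with Lemma \ref{GIT-quotient-Lemma}, the continuous map
\[ \Psi\colon \MC{M}_{(0,\xi_\C)}(Q,v) \ra \mu_\C^{-1}(\xi_\C)//G_v^c \]
induced by the inclusion is a bijection: surjectivity follows from (a) applied to the unique closed orbit in each $\sim$-equivalence class, and injectivity follows from (b), since two points of $\mu^{-1}(0,\xi_\C)$ with the same image in the GIT quotient necessarily lie in a common closed $G_v^c$-orbit (being themselves on closed orbits), hence in a single $G_v$-orbit. To promote $\Psi$ to a homeomorphism, I would show that it is proper (equivalently, closed): if $\{[x_n]\}\subset \MC{M}_{(0,\xi_\C)}$ is such that $\Psi([x_n])$ converges in the GIT quotient, then one can choose $G_v^c$-translates $g_n\cdot x_n$ that converge in $\mu_\C^{-1}(\xi_\C)$, and the gradient flow of $f_{x_n}$ (or a direct compactness argument using the convexity established above together with the fact that the norm $\|x_n\|$ stays bounded) produces a convergent subsequence in $\mu^{-1}(0,\xi_\C)/G_v$.

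The main obstacle is the analytic content of the Kempf–Ness argument on a possibly singular affine variety: one must control the behavior of $f_x$ on the $G_v^c$-orbit inside $\mu_\C^{-1}(\xi_\C)$, not just on the ambient vector space, and verify properness/closedness carefully enough to lift a set-theoretic bijection to a homeomorphism. This is where the convexity along $\exp(i\fg_v)$-directions is essential — it simultaneously gives uniqueness of the minimizer, identifies the minimizers with zeros of $\mu_\R$, and provides the compactness needed for continuity of $\Psi^{-1}$.
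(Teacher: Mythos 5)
This theorem is not proved in the paper at all: it is quoted verbatim from Nakajima (\cite[Theorem~3.1]{Nakajima94}), with the final ``moreover'' clause attributed to \cite[Proposition~2.2]{Sjamaar95} in the remark that follows. Your proposal reconstructs the standard Kempf--Ness argument that underlies those references, and in outline it is the right proof: restricting the linear Kempf--Ness theory to the closed $G_v^c$-invariant affine subvariety $\mu_\C^{-1}(\xi_\C)$ costs nothing (an orbit is closed there iff closed in the ambient vector space, and the affine GIT quotient of a closed subvariety sits inside that of the vector space), the first and second variation formulas for $f_x$ along $\exp(i\fg_v)$-geodesics are correct, and they do yield both existence and $G_v$-uniqueness of zeros of $\mu_\R$ on each closed orbit, hence the set-theoretic bijection and the ``moreover'' statement. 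Two points deserve flagging. First, a cosmetic one: $f_x$ is invariant under $G_v$ acting on the \emph{left} (unitarity of the $G_v$-action), so it descends to $G_v\backslash G_v^c\cong\exp(i\fg_v)$; your ``right-invariant'' phrasing has the sides switched, though the substance is unaffected. Second, and more seriously, the continuity of $\Psi^{-1}$ is exactly the nontrivial analytic content here, and your sketch leans on ``the fact that the norm $\|x_n\|$ stays bounded'' --- but that boundedness of minimal-norm representatives over a compact subset of the GIT quotient is precisely the properness theorem one needs to prove (or cite, e.g.\ Neeman or Schwarz on the topology of affine quotients, or Sjamaar's treatment); as written it is assumed rather than established. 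You correctly identify this as the main obstacle, so the gap is acknowledged, but in a complete write-up that step cannot be left at the level of a gesture.
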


\begin{remark} The final statement is not explicitly stated in
\cite{Nakajima94}, but seems to be well-known. See for instance
\cite[Proposition~2.2]{Sjamaar95}.
\end{remark}

The above result implies that $\MC{M}_{(0,\xi_\C)}(Q,v)$
carries the structure of a complex analytic space. We will have use for
one final result. Let $v\in \Z^I_{\geq0}$ be a fixed dimension vector
and let $\xi_\C\in \C^I$ such that $\om{Re}\xi_\C,\om{Im}\xi_\C\in D_v$.
Choose $\xi_\R \in D_v-\bigcup_{\theta\in R_+(v)} D_\theta$ and set
$\xi = (0,\xi_\C)$ and $\wt{\xi}=(\xi_\R,\xi_\C)$. The space
$\MC{M}_{\wt{\xi}}(Q,v)$ is a smooth hyper-K\"{a}hler manifold by Theorem \ref{Smooth-Quotient-Theorem}.
The inclusion 
\[ \mu^{-1}(\wt{\xi})=\mu_\R^{-1}(\xi_\R)\cap \mu_\C^{-1}(\xi_\C) \inj
\mu_\C^{-1}(\xi_\C)  \]
induces a map $\pi\colon \MC{M}_{\wt{\xi}}(Q,v)\ra \mu_\C^{-1}(\xi_\C)//G_v^c \cong \MC{M}_\xi(Q,v)$. In the following result we regard
$\MC{M}_{\wt{\xi}}(Q,v)$ as a complex manifold by fixing the complex
structure induced by the standard complex vector space structure of 
$\om{Rep}(\ovl{Q},v)$. 

\begin{theorem} \cite[Theorem~4.1]{Nakajima94} \label{Resolution-Sing-Theorem}
The map $\pi$ is holomorphic and provided $\MC{M}_\xi^{\om{reg}}(Q,v)$ is 
nonempty, it is a resolution of singularities, that is,
\begin{enumerate}[label=(\arabic*), ref=(\arabic*)]
\item $\pi\colon \MC{M}_{\wt{\xi}}(Q,v)\ra \MC{M}_\xi(Q,v)$ is proper,
\item $\pi$ induces an isomorphism $\pi^{-1}(\MC{M}_{\xi}^{\om{reg}}(Q,v))\cong \MC{M}_\xi^{\om{reg}}(Q,v)$ and
\item $\pi^{-1}(\MC{M}_\xi^{\om{reg}}(Q,v))$ is a dense subset of $\MC{M}_{\wt{\xi}}(Q,v)$. 
\end{enumerate}
\end{theorem}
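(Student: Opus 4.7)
The plan is to exploit the GIT description of both quiver varieties and use the standard theory of variation of GIT. By Theorem \ref{Quotient-Equivalence-Theorem} the target $\MC{M}_\xi(Q,v)$ is homeomorphic to the affine GIT quotient $\mu_\C^{-1}(\xi_\C)//G_v^c$, and a shifted version of the same Kempf--Ness argument identifies $\MC{M}_{\wt{\xi}}(Q,v)$ biholomorphically with the GIT quotient $\mu_\C^{-1}(\xi_\C)^{\mathrm{ss}}//G_v^c$, where semi-stability is taken with respect to the character $\chi_{\xi_\R}\colon G_v^c\to \C^*$ determined by $\xi_\R\in D_v$. Under these identifications, $\pi$ becomes the canonical morphism induced by the inclusion of the $\chi_{\xi_\R}$-semi-stable locus. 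Since the source is the $\mathrm{Proj}$ of the graded ring of $\chi_{\xi_\R}$-semi-invariants over the affine GIT coordinate ring $\C[\mu_\C^{-1}(\xi_\C)]^{G_v^c}$, this morphism is holomorphic and proper, yielding the holomorphicity statement and part~(1).

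For part~(2), a point $[x]\in \MC{M}_\xi^{\om{reg}}(Q,v)$ corresponds under Theorem \ref{Quotient-Equivalence-Theorem} to a closed $G_v^c$-orbit $G_v^c\cdot x\subset \mu_\C^{-1}(\xi_\C)$ whose unique $G_v$-representative with $\mu_\R=0$ has stabilizer equal to $T\subset G_v$. The generic choice of $\xi_\R$ afforded by Theorem \ref{Smooth-Quotient-Theorem} ensures that $x$ is automatically $\chi_{\xi_\R}$-stable, and the shifted Kempf--Ness theorem then produces a unique $G_v$-orbit inside $G_v^c\cdot x$ on which $\mu_\R$ attains the value $\xi_\R$. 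This provides a set-theoretic inverse of $\pi$ over the regular locus. Holomorphicity of the inverse is local at smooth points of both source and target: the holomorphic bijection $\pi$ has complex-linear differential there (since both tangent spaces are realised as symplectic complements to the orbit directions under the relevant reductions), and the inverse function theorem supplies a holomorphic inverse.

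For part~(3), the set $\pi^{-1}(\MC{M}_\xi^{\om{reg}}(Q,v))$ is open in $\MC{M}_{\wt{\xi}}(Q,v)$ and, by part~(2), nonempty under the hypothesis that $\MC{M}_\xi^{\om{reg}}(Q,v)$ is nonempty. It therefore suffices to show $\MC{M}_{\wt{\xi}}(Q,v)$ is irreducible; as a smooth complex manifold this reduces to connectedness, which in turn follows from the irreducibility of the affine variety $\mu_\C^{-1}(\xi_\C)$ together with the fact that $\mu_\C^{-1}(\xi_\C)^{\mathrm{ss}}$ is a nonempty open subset thereof. The main obstacle in this plan is the precise identification $\MC{M}_{\wt{\xi}}(Q,v)\cong \mu_\C^{-1}(\xi_\C)^{\mathrm{ss}}//G_v^c$: Theorem \ref{Quotient-Equivalence-Theorem} only covers the case $\xi_\R=0$, and extending it to a generic $\xi_\R$ requires encoding $\xi_\R$ as a character of $G_v^c$ and verifying that the symplectic stability condition $\mu_\R=\xi_\R$ within a $G_v^c$-orbit agrees with GIT $\chi_{\xi_\R}$-stability for that character.
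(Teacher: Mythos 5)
First, a point of comparison: the paper does not prove this statement at all --- it is quoted directly as \cite[Theorem~4.1]{Nakajima94} and used as a black box. So your proposal is not competing with an internal argument; it is an attempt to reprove Nakajima's theorem. The route you choose (identify both sides with GIT quotients of $\mu_\C^{-1}(\xi_\C)$, one affine and one a $\mathrm{Proj}$ of $\chi_{\xi_\R}$-semi-invariants, and read off properness from projectivity of $\mathrm{Proj}\to\mathrm{Spec}$) is the standard modern proof, following King's stability for quiver representations and Nakajima's later papers; Nakajima's original 1994 argument for properness is instead a direct convergence/compactness estimate on $\mu^{-1}(\wt{\xi})$. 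However, as you yourself flag in the last sentence, the entire proposal rests on the identification $\MC{M}_{\wt{\xi}}(Q,v)\cong \mu_\C^{-1}(\xi_\C)^{\mathrm{ss}}/\!/G_v^c$, and this is precisely the nontrivial content: it requires (a) that $\xi_\R$ actually defines a character of $G_v^c$ (so an integrality or at least rationality argument, plus the observation that replacing a real generic $\xi_\R$ by a rational parameter in the same chamber does not change the quotient), and (b) a Kempf--Ness theorem for the \emph{shifted} real moment map, matching $\mu_\R^{-1}(\xi_\R)\cap G_v^c\cdot x\neq\emptyset$ with $\chi_{\xi_\R}$-semistability of $x$. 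Theorem \ref{Quotient-Equivalence-Theorem} only supplies this for $\xi_\R=0$. Acknowledging the gap does not close it; as written, parts (1) and (2) are not proved.

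Two further points need repair even granting that identification. In part (2), the inverse function theorem requires the differential of $\pi$ to be \emph{invertible}, not merely complex-linear; the clean statement to invoke is that an injective holomorphic map between equidimensional complex manifolds is a biholomorphism onto its (open) image. In part (3), connectedness of $\MC{M}_{\wt{\xi}}(Q,v)$ together with nonemptiness of the open set $\pi^{-1}(\MC{M}_\xi^{\om{reg}}(Q,v))$ does not yield density --- a nonempty open subset of a connected manifold need not be dense. What saves the argument is that $\pi^{-1}(\MC{M}_\xi^{\om{sing}}(Q,v))$ is a proper closed analytic (indeed Zariski-closed, in the GIT model) subset, and the complement of such a subset in a connected complex manifold, or any nonempty Zariski-open subset of an irreducible variety, is dense. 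The irreducibility of $\mu_\C^{-1}(\xi_\C)$ that you want comes from the last clause of Theorem \ref{CB-Theorem}, whose hypothesis (existence of a simple $\Pi^\lambda$-module of dimension $v$) is equivalent, via Proposition \ref{Semi-Simp-Quiver-Eq-Prop}, to the standing assumption that $\MC{M}_\xi^{\om{reg}}(Q,v)$ is nonempty; this chain of equivalences should be made explicit rather than assumed.
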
 

\section{Representations of Quivers}
We briefly recall a few basic notions concerning the representation theory
of quivers. An excellent reference for this material is \cite{Brion12}.  Afterwards we give the definition of the deformed preprojective algebras $\Pi^\lambda = \Pi^\lambda(Q)$ of \cite{Crawley98} and spell out the correspondence between $\MC{M}_{(0,\lambda)}(Q,v)$ and the isomorphism
classes of semi-simple $\Pi^\lambda$-modules. Finally, we recall the
construction of the root system associated with a quiver and state the key result of \cite{Crawley01} relevant for our purpose.

A (complex) representation of a quiver $Q$ is a pair $(V,f)$ where
$V=(V_i)_{i\in I}$ is a family of complex vector spaces and
$f=(f_h\colon V_{s(h)}\ra V_{t(h)})_{h\in Q}$ is a family of linear maps.
We will only be concerned with finite dimensional representations, i.e.,
$V_i$ is finite dimensional for each $i\in I$. 
The dimension of a representation $(V,f)$ is $\om{dim}V\coloneqq (\om{dim}(V_i))_{i\in I}\in \Z^I_{\geq 0}$. A homomorphism $u\colon (V,f)\ra (W,g)$ of representations is a collection of linear maps $u_i\colon V_i\ra W_i$ for $i\in I$ such that $f_hu_{s(h)}=u_{t(h)}g_h$ for each $h\in Q$. We therefore have a category of complex representations of $Q$. This category is equivalent to the category of left modules over the quiver algebra $\C Q$: the complex algebra generated by $\{e_i:i\in I\}$ and $\{h:h\in Q\}$
subject to the relations
\[ e_ie_j = \delta_{ij}e_i, \;\; e_ih = \delta_{it(h)}h \;\; \mbox{ and } \;\;  
 he_j = \delta_{s(h)j}h  \]
for all $i,j\in I$ and $h\in Q$, where $\delta_{ij}=1$ if $i=j$ and $\delta_{ij}=0$ otherwise. The $\{e_i\}_{i\in I}$ is a complete set of 
mutually orthogonal idempotents, in particular $1_{\C Q}=\sum_{i\in I}e_i$.

We briefly recall the equivalence between representations of $Q$ and left
$\C Q$-modules. Let $(V,f)$ be a representation of $Q$ and put $X=\oplus_{i\in I}V_i$. For each $i\in I$ let $\iota_i\colon V_i\ra X$ and $\pi_i\colon X\ra V_i$ denote the inclusion and projection, respectively. Define
$\rho\colon \C Q\ra \om{End}_\C (X)$ by $\rho(e_i) = \iota_i\circ \pi_i$ for each $i\in I$ and $\rho(h) = \iota_{t(h)}\circ f_h\circ \pi_{s(h)}$ for each $h\in Q$. One may then verify that $\rho$ is a well-defined homomorphism of 
$\C$-algebras and therefore
endows $X$ with a $\C Q$-module structure. One may recover $(V,f)$
from $(X,\rho)$ by setting $V_i = e_iX$ for $i\in I$ and
$f_h=\pi_{t(h)}\circ \rho(h)\circ \iota_{s(h)}$ for $h\in Q$. 
With this in mind, we will pass freely between the notion of a $Q$ representation and a $\C Q$-module. 

A $\C Q$-module $X$ of dimension $v\in \Z^I$ defines a unique $G_v^c$-orbit
$\MC{O}_X\subset \om{Rep}(Q,v)$. A representative $x$ for the orbit is
obtained by choosing a basis for $V_i = e_iX$, thereby identifying
$V_i\cong \C^{v_i}$, for each $i\in I$ and then letting $x_h\colon \C^{v_{s(h)}}\ra \C^{v_{t(h)}}$ be the corresponding linear maps. 
The correspondence $X\mapsto \MC{O}_X$ sets up a bijection
between the isomorphism classes of $\C Q$-modules of dimension $v$
and the set of $G_v^c$-orbits in $\om{Rep}(Q,v)$. Given a parameter 
$\lambda\in \C^I$ the $G_v^c$-orbits in $\mu_\C^{-1}(\lambda)\subset \om{Rep}(\ovl{Q},v)$ have a representation theoretic interpretation as well.  

\begin{definition}\cite[p.~611]{Crawley98} 
Let $Q$ be a quiver with vertex set $I$. The deformed preprojective algebra $\Pi^\lambda=\Pi^\lambda(Q)$
of weight $\lambda\in \C^I$ is defined to be the quotient of the quiver algebra 
$\C \ovl{Q}$ by the two-sided ideal generated by
\[ c = \sum_{i\in I} \lambda_i e_i - \sum_{h\in Q} [h,\ovl{h}]  .\]
\end{definition}

Observe that there is a decomposition $c=\sum_i c_i$ where
\[ c_i = e_i\left(\lambda_i1_{\C \ovl{Q}}-\sum_{h\in t^{-1}(i)} \eps(h)h\ovl{h}\right). \] 
In view of the formula \eqref{MomentMap-Eq} for $\mu_\C$, it is not hard to
see that the $G_v^c$-orbit of a $\C\ovl{Q}$-module $X$ is contained
in $\mu^{-1}(\lambda)$ precisely when $X$ descends to a $\Pi^\lambda$-module
along the projection $\C \ovl{Q}\ra \Pi^\lambda$. Therefore, the
$G^c_v$-orbits in $\mu_\C^{-1}(\lambda)\subset \om{Rep}(\ovl{Q},v)$ are
in natural bijection with the isomorphism classes of $\Pi^\lambda$-modules of dimension $v$. 

We have the following result describing the closed $G^c_v$-orbits
in $\om{Rep}(Q,v)$ (see for instance \cite[Section~2]{Brion12} for a proof).
Note that a $G_v^c$-orbit is closed in the Zariski topology if and only
if it is closed in the analytic topology. 

\begin{proposition} \label{Closed-Orbit-Prop}
Let $Q$ be a quiver with vertex set $I$ and let
$X$ be a finite dimensional $\C Q$-module of dimension $v\in \Z^I_{\geq 0}$.
Let $\MC{O}_X$ denote the orbit corresponding to the isomorphism
class of $X$ in $\om{Rep}(Q,v)$.
Then $\MC{O}_X$ is closed if and only if $X$ is semi-simple.
Moreover, let
\[ 0=X_0\subset X_1\subset X_2\subset \cdots \subset X_n = X \]
be a composition series for $X$, i.e., each quotient $X_k/X_{k-1}$, $1\leq k\leq n$, is a simple module, and let
$X_{ss}=\bigoplus_{i=1}^n X_i/X_{i-1}$ be the semi-simplification of $X$.
Then $\MC{O}_{X_{ss}}$ is the unique closed orbit contained in the closure 
of $\MC{O}_X$. 
\end{proposition}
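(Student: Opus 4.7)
The plan is to prove both parts of the proposition by identifying $\MC{O}_{X_{ss}}$ as the unique closed orbit contained in $\ovl{\MC{O}_X}$. Standard GIT for a reductive group acting algebraically on an affine variety over $\C$ (the same fact invoked in Lemma \ref{GIT-quotient-Lemma}) guarantees that each orbit closure contains a unique closed orbit. The task therefore reduces to two independent steps: (a) characterizing intrinsically when $\MC{O}_X$ is closed, which gives the first assertion and in particular identifies $\MC{O}_{X_{ss}}$ as closed since $X_{ss}$ is semi-simple, and (b) exhibiting $\MC{O}_{X_{ss}}$ inside $\ovl{\MC{O}_X}$.

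For step (a) I would apply Matsushima's criterion: a point in an affine $G$-variety, with $G$ reductive over $\C$, has closed orbit if and only if its stabilizer is reductive. Unpacking the action of $G_v^c$ on $\om{Rep}(Q,v)$, the stabilizer of a representative of $\MC{O}_X$ is precisely the unit group $\om{Aut}_{\C Q}(X)$ of the finite-dimensional associative $\C$-algebra $\om{End}_{\C Q}(X)$. The unit group of such an algebra is reductive exactly when the algebra is semi-simple; by Artin--Wedderburn this is equivalent to $X$ being a semi-simple $\C Q$-module. Combining these two equivalences gives the first claim.

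For step (b) I would construct an explicit one-parameter subgroup degeneration from $X$ to $X_{ss}$. Choose $\C$-linear splittings to identify $V_i = e_iX = \bigoplus_{k=1}^n (X_k/X_{k-1})_i$ for each $i \in I$, and define $\lambda\colon\C^*\to G_v^c$ by letting $\lambda(t)$ act as the scalar $t^{-k}$ on the $k$-th graded piece at every vertex. Because each $X_k$ is a $\C Q$-submodule, every structure map $x_h\colon V_{s(h)}\to V_{t(h)}$ is block triangular with respect to the grading, with the $(k,\ell)$-block $(x_h)_{k,\ell}$ vanishing unless $k\leq \ell$. A direct computation shows that $(\lambda(t)\cdot x)_h$ has $(k,\ell)$-block $t^{\ell-k}(x_h)_{k,\ell}$, so as $t\to0$ the strictly off-diagonal blocks vanish and the limit is the block-diagonal representation on $\bigoplus_k X_k/X_{k-1}$, a representative of $\MC{O}_{X_{ss}}$.

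Combining both steps: by (a) the orbit $\MC{O}_{X_{ss}}$ is closed, by (b) it lies in $\ovl{\MC{O}_X}$, and by uniqueness of the closed orbit in an orbit closure it must be the unique one therein. I expect the main obstacle to be step (a), specifically verifying Matsushima's criterion together with the equivalence between semi-simplicity of a module and reductivity of its automorphism group; these are well-documented classical results but are the only genuinely nonelementary inputs, whereas step (b) is an explicit matrix calculation driven by the composition series.
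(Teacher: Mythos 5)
Your step (b) is correct and is exactly the standard degeneration argument; it also yields, as you do not quite note, the direction ``closed $\Rightarrow$ semi-simple'' of the first assertion for free (if $X$ is not semi-simple then $X_{ss}\not\cong X$, so $\MC{O}_{X_{ss}}$ is an orbit in $\ovl{\MC{O}_X}$ distinct from $\MC{O}_X$, whence $\MC{O}_X$ is not closed). The genuine gap is in step (a), and it is fatal to the remaining direction ``semi-simple $\Rightarrow$ closed''. Two of the links in your chain of equivalences are false. First, Matsushima's criterion is not a biconditional in the form you use it: a closed orbit in an affine variety is affine and hence has reductive stabilizer, but a reductive (even trivial) stabilizer does not force the orbit to be closed. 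Second, semi-simplicity of the algebra $\om{End}_{\C Q}(X)$ does not imply semi-simplicity of $X$: any non-simple indecomposable brick has $\om{End}_{\C Q}(X)=\C$. Both failures are already visible in the smallest example: for $Q=A_2$ and $v=(1,1)$ the indecomposable representation $x=1\in\om{Rep}(Q,v)=\C$ has orbit $\C^*$, which is not closed, stabilizer $\{(s,s)\}\cong\C^*$, which is reductive, and endomorphism algebra $\C$, which is semi-simple as an algebra, yet $X$ is not a semi-simple module. So your step (a) proves neither implication of the first assertion.

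To repair the argument you must prove directly that a semi-simple $X$ has closed orbit. The standard route (this is what the reference \cite[Section~2]{Brion12} cited in the paper does) is the Hilbert--Mumford criterion: the unique closed orbit in $\ovl{\MC{O}_X}$ is of the form $\MC{O}_{y}$ with $y=\lim_{t\to 0}\lambda(t)\cdot x$ for some one-parameter subgroup $\lambda$ of $G_v^c$; the weight-space decomposition of $\lambda$ defines a filtration of $X$ by $\C Q$-submodules whose associated graded module is the limit $y$ --- i.e., the converse of your step (b) --- and for semi-simple $X$ the associated graded of any filtration by submodules is isomorphic to $X$ itself, so $\MC{O}_y=\MC{O}_X$ is closed. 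An alternative repair is via invariants: the functions $x\mapsto\om{tr}(a|_{X_x})$ for $a\in\C Q$ are $G_v^c$-invariant polynomials, hence constant on orbit closures, and a semi-simple module is determined up to isomorphism by its dimension vector and these traces, so the (semi-simple, by the contrapositive above) module on the closed orbit inside $\ovl{\MC{O}_X}$ must be isomorphic to $X$. Either way, some genuinely module-theoretic input beyond reductivity of the stabilizer is required.
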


Let $\MC{SS}(\Pi^\lambda,v)$ denote the set of isomorphism classes of
semi-simple $\Pi^\lambda$-modules of dimension $v$. For a semi-simple
$\Pi^\lambda$-module $X$ we let $[X]$ denote its isomorphism class
in $\MC{SS}(\Pi^\lambda,v)$.  

\begin{proposition} \label{Semi-Simp-Quiver-Eq-Prop}
Let $Q$ be a quiver with vertex set $I$ and let
$\Pi^\lambda$ be the associated deformed preprojective algebra of
weight $\lambda\in \C^I$. Then for each dimension vector $v\in \Z^I$,
the map
\[ \rho\colon \MC{M}_{(0,\lambda)}(Q,v)\ra \MC{SS}(\Pi^\lambda,v), \]
that assigns to a point $x\in \MC{M}_{(0,\lambda)}(Q,v)$ the isomorphism class
of the $\Pi^\lambda$-module corresponding to any point
$\tilde{x}\in \mu^{-1}(0,\lambda)$ in the fiber over $x$, is a well-defined bijection. 

Moreover, if $\rho(x)=[X]$ and $X=\bigoplus_{j=1}^k n_jX_j$ with the
$X_j$ simple and $n_j\in \N$, then for any point $\tilde{x}\in \mu^{-1}(0,\lambda)$ above $x$ there are isomorphisms
\[ (G_v)_{\tilde{x}} \cong \prod_{j=1}^k U(n_j) \;\; \mbox{ and }
  (G_v^c)_{\tilde{x}} \cong \prod_{j=1}^k \om{GL}(n_j,\C) .\]
In particular, $x\in \MC{M}^{\om{reg}}_{(0,\lambda)}(Q,v)$ if and only if
$X$ is simple.  
\end{proposition}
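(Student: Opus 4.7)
The plan is to assemble $\rho$ from the identifications already established in the excerpt. Combining Theorem~\ref{Quotient-Equivalence-Theorem} with Lemma~\ref{GIT-quotient-Lemma} identifies $\MC{M}_{(0,\lambda)}(Q,v)$ with the set of closed $G_v^c$-orbits in $\mu_\C^{-1}(\lambda)$. Pairing this with the dictionary (noted just before the proposition) between $G_v^c$-orbits in $\mu_\C^{-1}(\lambda)$ and isomorphism classes of $\Pi^\lambda$-modules of dimension $v$, together with Proposition~\ref{Closed-Orbit-Prop} which identifies closed orbits with semisimple modules, the map $\rho$ drops out.

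More precisely, for well-definedness I would argue that if $\tilde{x}_1,\tilde{x}_2\in\mu^{-1}(0,\lambda)$ lie above the same $x$, then they are in the same $G_v$-orbit, so the same $G_v^c$-orbit, and thus yield isomorphic $\Pi^\lambda$-modules. The orbit $G_v^c\cdot\tilde{x}$ is closed, since under the homeomorphism of Theorem~\ref{Quotient-Equivalence-Theorem} the point $x$ corresponds to a unique closed $G_v^c$-orbit that by the final assertion of that theorem meets $\mu^{-1}(0,\lambda)$ in $G_v\cdot\tilde{x}$, forcing $\tilde{x}$ into this closed orbit; hence the associated module is semisimple by Proposition~\ref{Closed-Orbit-Prop}. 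Injectivity is the uniqueness part of Theorem~\ref{Quotient-Equivalence-Theorem}: distinct $x$'s yield distinct closed $G_v^c$-orbits and hence non-isomorphic semisimple modules. Surjectivity is symmetric: given $[X]\in\MC{SS}(\Pi^\lambda,v)$, Proposition~\ref{Closed-Orbit-Prop} makes the corresponding orbit closed, and Theorem~\ref{Quotient-Equivalence-Theorem} produces a point $x\in\MC{M}_{(0,\lambda)}(Q,v)$ with $\rho(x)=[X]$.

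For the stabilizer computation, I would identify $(G_v^c)_{\tilde{x}}$ with $\om{Aut}_{\Pi^\lambda}(X)$ by observing that a tuple $(g_i)_{i\in I}\in G_v^c$ fixes $\tilde{x}$ exactly when it assembles into a $\Pi^\lambda$-module automorphism of $X=\bigoplus_i V_i$ (with $V_i=e_iX\cong\C^{v_i}$). Writing $X\cong\bigoplus_{j=1}^k \C^{n_j}\otimes X_j$ with the $X_j$ pairwise non-isomorphic simples, Schur's lemma yields $\om{End}_{\Pi^\lambda}(X)\cong \prod_j M_{n_j}(\C)$, whence $(G_v^c)_{\tilde{x}}\cong \prod_j \om{GL}(n_j,\C)$. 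Since $\tilde{x}\in\mu_\R^{-1}(0)$ and $G_v^c\cdot\tilde{x}$ is closed, $(G_v)_{\tilde{x}}=(G_v^c)_{\tilde{x}}\cap G_v$ is a maximal compact subgroup of the reductive group $(G_v^c)_{\tilde{x}}$, hence isomorphic to $\prod_j U(n_j)$. The final claim, that $x\in\MC{M}^{\om{reg}}_{(0,\lambda)}(Q,v)$ if and only if $X$ is simple, then follows by a dimension count: the central circle $T\subset G_v$ already sits inside $(G_v)_{\tilde{x}}$ as the diagonal scalars $(zI_{n_1},\dots,zI_{n_k})$, so $(G_v)_{\tilde{x}}=T$ forces $\sum_j n_j^2=1$, i.e.\ $k=n_1=1$.

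I expect the main technical obstacle to be the identification of $(G_v)_{\tilde{x}}$ as the \emph{unitary} maximal compact of $(G_v^c)_{\tilde{x}}$, rather than merely as some maximal compact subgroup. The cleanest route is likely a Kempf-Ness style argument: for a closed orbit meeting $\mu_\R^{-1}(0)$, $(G_v)_{\tilde{x}}$ is a compact real form of $(G_v^c)_{\tilde{x}}$, which determines it up to conjugation. Alternatively, one can argue directly that the isotypic decomposition $X\cong\bigoplus_j \C^{n_j}\otimes X_j$ may be chosen orthogonal with respect to the Hermitian structure on $\om{Rep}(\ovl{Q},v)$, so that $(G_v^c)_{\tilde{x}}\cap G_v$ visibly becomes $\prod_j U(n_j)$.
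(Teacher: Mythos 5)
Your proposal is correct and follows essentially the same route as the paper: identify points of $\MC{M}_{(0,\lambda)}(Q,v)$ with closed $G_v^c$-orbits via Theorem~\ref{Quotient-Equivalence-Theorem} and Lemma~\ref{GIT-quotient-Lemma}, match closed orbits with semisimple modules via Proposition~\ref{Closed-Orbit-Prop}, and compute $(G_v^c)_{\tilde{x}}\cong\om{Aut}_{\Pi^\lambda}(X)$ by Schur's lemma. The "main technical obstacle" you flag is resolved in the paper exactly as you anticipate, by citing the Kempf--Ness-type result (Sjamaar, Proposition~1.6) that for $\tilde{x}\in\mu_\R^{-1}(0)$ the inclusion $(G_v)_{\tilde{x}}\hookrightarrow(G_v^c)_{\tilde{x}}$ realizes the latter as the complexification of the former, so $(G_v)_{\tilde{x}}$ is a maximal compact subgroup and hence conjugate to $\prod_j U(n_j)$.
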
 
\begin{proof} We divide the proof into four steps. The first sentence
in each step is a claim that we then go on to verify.  

\textbf{Step 1}: The rule $[X]\mapsto \MC{O}_X\subset \mu_\C^{-1}(\lambda)$
defines a bijection between $\MC{SS}(\Pi^\lambda,v)$ and the set of
closed $G_v^c$-orbits in $\mu_\C^{-1}(\lambda)$. We have seen that
the given rule sets up a bijection between the set of isomorphism classes
of $\Pi^\lambda$-modules of dimension $v$ and the $G_v^c$-orbits contained
in $\mu_\C^{-1}(\lambda)$. Since a $\Pi^\lambda$-module
$X$ is semi-simple if and only if it is semi-simple as a $\C\ovl{Q}$-module,
Proposition \ref{Closed-Orbit-Prop} ensures that this bijection restricts to
a bijection between the isomorphism classes of the semi-simple $\Pi^\lambda$-modules and the closed $G_v^c$-orbits in $\mu_\C^{-1}(\lambda)$.  

\textbf{Step 2}: The rule $(G_v\cdot x)\mapsto (G_v^c\cdot x)$ for
$x\in \mu^{-1}(0,\lambda)$ defines a bijection between the $G_v$-orbits 
in $\mu^{-1}(0,\lambda)$ and the closed $G_v^c$-orbits in
$\mu_\C^{-1}(\lambda)$. For any dimension vector $v\in \Z^I$ we have a commutative diagram
\[ \begin{tikzcd} \mu_\R^{-1}(0)\cap \mu_\C^{-1}(\lambda) \arrow{r}{i}
\arrow{d}{p} & \mu_\C^{-1}(\lambda) \arrow{d}{q} \\
\MC{M}_{(0,\lambda)}(Q,v) \arrow{r}{j} &
\mu^{-1}_\C(\lambda)//G_v^c ,\end{tikzcd} \]
where $p$ and $q$ are the quotient maps, $i$ is the inclusion and $j$
is the induced map between the quotients. According to Theorem 
\ref{Quotient-Equivalence-Theorem} the map $j$ is a homeomorphism
and in particular a bijection. Therefore, the only thing we need to
prove is that for each $x\in \mu_\R^{-1}(0)\cap \mu_\C^{-1}(\lambda)$
the orbit $G_v^c\cdot x\subset \mu_\C^{-1}(\lambda)$ is closed.
By Lemma \ref{GIT-quotient-Lemma} there is a unique closed orbit
$G^c_v\cdot y \subset q^{-1}q(i(x))$. Moreover, by the second statement
in Theorem \ref{Quotient-Equivalence-Theorem} we may assume that
$y=i(z)$ for some $z\in \mu_\R^{-1}(0)\cap \mu_\C^{-1}(\lambda)$.
Then as $jp(x)=qi(x)=qi(z)=jp(z)$ and $j$ is injective we conclude
that $p(x)=p(z)$ and hence $G_v\cdot x=G_v\cdot z$. This implies that
$G_v^c\cdot x = G_v^c\cdot z$ and as the latter orbit is closed by
construction the claim has been verified.

\textbf{Step 3}: The map $\rho\colon \MC{M}_{(0,\lambda)}(Q,v)\ra \MC{SS}(\Pi^\lambda,v)$
is a well-defined bijection. Let $p\colon \mu^{-1}(0,\lambda)\ra \MC{M}_{(0,\lambda)}(Q,v)$ denote the quotient map as in the above diagram. The map sending
$x\in \MC{M}_{(0,\lambda)}(Q,v)$ to the $G_v$-orbit $p^{-1}(x)\subset \mu^{-1}(0,\lambda)$ is clearly a bijection. The map $\rho$ sending a
point $x\in \MC{M}_{(0,\lambda)}(Q,v)$ to the isomorphism class of the
$\Pi^\lambda$-module associated with any choice of $\tilde{x}\in p^{-1}(x)$
is then precisely the composition of the bijection $x\mapsto p^{-1}(x)=G_v\cdot \tilde{x}$, the bijection of step $2$ and the inverse of the bijection of step
$1$. It is then clear that $\rho$ is a well-defined bijection. 

\textbf{Step 4}: If $\rho(x) = [X]$ and $X=\sum_{j=1}^k n_jX_j$ is a decomposition of $X$ into simple modules, then for any $\tilde{x}\in p^{-1}(x)$
it holds true that
\[ (G_v)_{\tilde{x}}\cong \prod_{j=1}^k U(n_j) \;\; \mbox{ and }\;\; (G_v^c)_{\tilde{x}}\cong \prod_{j=1}^k \om{GL}(n_j,\C) .\]
Let $y\in \mu_\C^{-1}(\lambda)\subset \om{Rep}(\ovl{Q},v)$ and denote the
corresponding $\Pi^\lambda$-module by $Y$. It is then easy to see that
the stabilizer $(G_v^c)_y$ coincides with the module theoretic automorphism group $\om{Aut}_{\Pi^\lambda}(Y)$.
If $Y$ is semi-simple and $Y = \oplus_{j=1}^k n_jY_j$ is a decomposition
into simple modules, it follows by Schur's lemma that
\[ \om{Aut}_{\Pi^\lambda}(Y) \cong \prod_{j=1}^k \om{GL}(n_j,\C)  .\]
Let $x\in \MC{M}_{(0,\lambda)}$, let
$\tilde{x}\in \mu_\R^{-1}(0)\cap \mu_\C^{-1}(\lambda)$ be a point above
$x$ and let $X=\sum_{j=1}^kn_jX_k$ be the corresponding semi-simple
$\Pi^\lambda$-module decomposed into simple summands. From the above
considerations we may deduce that there is an isomorphism
$(G_v^c)_{\tilde{x}}\cong \prod_{j=1}^k \om{GL}(n_j,\C)$. 
For any point $y\in \mu_\R^{-1}(0)$ it holds true
that the inclusion of stabilizers $\iota\colon (G_v)_y\inj (G_v^c)_y$
induces an isomorphism between the complexification of $(G_v)_y$ and
$(G_v^c)_y$ (see \cite[Proposition~1.6]{Sjamaar95}). Applying
this in the situation above we deduce that
$\prod_{j=1}^k \om{GL}(n_j,\C)$ is isomorphic to the complexification
of $(G_v)_{\tilde{x}}$. In particular, $(G_v)_{\tilde{x}}$ is isomorphic
to a maximal compact subgroup of $\prod_{j=1}^k \om{Gl}_{n_j}(\C)$
and as all such subgroups are conjugate we deduce that there is
an isomorphism
\[ (G_v)_{\tilde{x}}\cong \prod_{j=1}^k U(n_j)  .\]
This completes the final step and hence the proof.     
\end{proof} 

In \cite{Crawley01} Crawley-Boevey gives a strong result on the
existence and uniqueness of simple $\Pi^\lambda$-modules. To state the
result we need to recall the construction of the root system associated
with a quiver. Here we follow \cite[Section~2]{Crawley01}.

Let $Q$ be a quiver with vertex set $I$ and let $(\cdot,\cdot)\colon \Z^I\times \Z^I\ra \Z$ be the associated symmetric bilinear form of Definition
\ref{Def-SB-form}. Let $\{\eps_i\in \Z^I:i\in I\}$ denote the standard
basis of $\Z^I$, that is, $(\eps_i)_j =\delta_{ij}$ for $i,j\in I$.
To simplify the exposition slightly we will assume that $Q$ contains no
edge loops, i.e., there is no $h\in Q$ with $s(h)=t(h)$. This is valid in the case of (extended) Dynkin quivers. Note that this condition implies that 
$(\eps_i,\eps_i)=2$ for each $i\in I$.

For each $i\in I$ there is a reflection $s_i\colon \Z^I\ra \Z^i$ defined by $s_i(v) = v-(v,\eps_i)\eps_i$. These reflections generate a finite subgroup $W\subset \om{Aut}_\Z(\Z^I)$ called the Weyl group.
The action of the Weyl group on $\Z^I$ preserves the symmetric
bilinear form associated with the quiver.  
The support of $\alpha\in \Z^I$ is the full subquiver of $Q$ with vertex set
$\{ i\in I:\alpha_i\neq 0\}$. The fundamental domain $F\subset \Z^I_{\geq 0}-\{0\}$ is then defined to be the set of $\alpha\in \Z^I_{\geq 0}$ with connected support satisfying $(\alpha,\eps_i)\leq 0$ for each $i\in I$. 
The root system associated with the quiver $Q$ is defined to be
$\Phi \coloneqq \Phi^{\om{re}}\cup \Phi^{\om{im}}\subset \Z^I$ where
\[ \Phi^{\om{re}} = \bigcup_{i\in I} W\cdot \eps_i \;\; \mbox{ and } \;\;
\Phi^{\om{im}} = W\cdot (F\cup -F) .\]
The elements of $\Phi^{\om{re}}$ are called real roots and the elements of
$\Phi^{\om{im}}$ are called imaginary roots. One may show that there is
a decomposition $\Phi=\Phi^+\cup \Phi^-$ into positive and negative roots,
where a root $\alpha$ is positive (respectively negative) if $\alpha\in \Z^I_{\geq 0}$ (respectively $\alpha\in \Z^I_{\leq 0}$). We record the
following elementary fact.  

\begin{lemma} \label{RootLemma0}
For each $\alpha\in \Phi^{\om{re}}$ it holds true that $(\alpha,\alpha)=2$. For each $\beta\in \Phi^{\om{im}}$ it holds true that $(\beta,\beta)\leq 0$.
\end{lemma}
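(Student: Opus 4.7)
The plan is to reduce both claims to the defining orbit descriptions of $\Phi^{\om{re}}$ and $\Phi^{\om{im}}$, then exploit the Weyl-invariance of the bilinear form together with the no-loops hypothesis and the defining inequalities of the fundamental domain.

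For the real roots, I would first recall that the excerpt states, and one easily verifies from the formula in Definition \ref{Def-SB-form}, that $(\eps_i,\eps_i)=2$ for every vertex $i\in I$ (this uses the absence of edge loops). Next, since each simple reflection $s_j$ acts on $\Z^I$ by an orthogonal transformation with respect to $(\cdot,\cdot)$, the whole Weyl group $W$ preserves the form. Any real root has the form $\alpha = w\cdot \eps_i$ for some $w\in W$ and $i\in I$, so
\[ (\alpha,\alpha) = (w\cdot \eps_i, w\cdot \eps_i) = (\eps_i,\eps_i) = 2. \]

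For the imaginary roots, by the same Weyl-invariance it suffices to treat $\alpha\in F\cup -F$, and replacing $\alpha$ by $-\alpha$ if necessary we may assume $\alpha\in F$. Writing $\alpha = \sum_{i\in I} \alpha_i \eps_i$ and using bilinearity gives the identity
\[ (\alpha,\alpha) = \sum_{i\in I} \alpha_i (\alpha,\eps_i). \]
By the definition of the fundamental domain, $\alpha_i\geq 0$ for every $i\in I$ while $(\alpha,\eps_i)\leq 0$ for every $i\in I$, so every summand on the right is non-positive, proving $(\alpha,\alpha)\leq 0$.

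There is no serious obstacle here; the only subtlety is to verify that the simple reflections really are orthogonal for $(\cdot,\cdot)$, which is an immediate computation using $s_i(v)=v-(v,\eps_i)\eps_i$ and $(\eps_i,\eps_i)=2$, and to handle the sign ambiguity $F$ versus $-F$ in the imaginary-root case. Both are standard manipulations.
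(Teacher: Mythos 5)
Your proof is correct and follows essentially the same route as the paper: both arguments use the Weyl-invariance of the form together with $(\eps_i,\eps_i)=2$ for the real roots, and reduce to $\beta\in F$ and expand $(\beta,\beta)=\sum_i b_i(\beta,\eps_i)\leq 0$ for the imaginary roots. Your extra remarks on checking that the simple reflections preserve the form and on the $F$ versus $-F$ sign are fine but standard.
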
 
\begin{proof} As already noted $(\eps_i,\eps_i)=2$ for each $i\in I$. The
first assertion now follows from the fact that each $\alpha\in \Phi^{\om{re}}$
may be expressed in the form $w\cdot \eps_i$ for some $w\in W$ and $i\in I$.
For the second assertion we may assume without loss of generality that
$\beta\in F$. Writing $\beta = \sum_{i\in I} b_i \eps_i$ with $b_i\geq 0$ we find
\[ (\beta,\beta) = \sum_{i\in I}b_i(\beta,\eps_i) \leq 0 \]
since by definition $(\beta,\eps_i)\leq 0$ for each $i\in I$. 
\end{proof}

We may now state the key result on the existence and uniqueness of simple
$\Pi^\lambda$-modules. In the following result the function
$p\colon \Z^I\ra \Z$ is defined by the formula $p(\alpha)=1-\frac12 (\alpha,\alpha)$. 

\begin{theorem}\cite[Theorem~1.2]{Crawley01} \label{CB-Theorem}
Let $Q$ be a quiver
with vertex set $I$. Let
$\Pi^\lambda$ be the associated deformed preprojective algebra of weight
$\lambda\in \Z^I$. Then for each $\alpha\in \Z^I_{\geq 0}$ the following is
equivalent
\begin{enumerate}[label = (\roman*), ref = (\roman*)]
\item There exists a simple $\Pi^\lambda$-module of dimension $\alpha$.
\item $\alpha$ is a positive root with $\lambda\cdot \alpha=0$ and for
every decomposition $\alpha =\sum_t \beta^{(t)}$ into positive roots
satisfying $\lambda\cdot \beta^{(t)}=0$ one has
\[ p(\alpha)> \sum_t p(\beta^{(t)})  .\]
\end{enumerate}
In that situation $\mu_\C^{-1}(\lambda)\subset \om{Rep}(\ovl{Q},\alpha)$
is a reduced and irreducible complete intersection of dimension
$\alpha\cdot \alpha -1 +2p(\alpha)$ and the general element is a simple
representation. 
\end{theorem}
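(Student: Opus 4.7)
The plan is to prove both implications (i)$\Leftrightarrow$(ii) and then deduce the geometric statement about $\mu_\C^{-1}(\lambda)$. I would organize the argument into three parts: the straightforward implication (i)$\Rightarrow$(ii); the sufficiency (ii)$\Rightarrow$(i), which carries the main difficulty; and finally the dimension and complete intersection assertion.

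For (i)$\Rightarrow$(ii), suppose $X$ is a simple $\Pi^\lambda$-module of dimension $\alpha$. Evaluating the defining relation $c = \sum_i \lambda_i e_i - \sum_{h\in Q}[h,\ovl{h}]$ on $X$ and taking ordinary traces (noting each commutator $[h,\ovl{h}]$ has vanishing trace when summed over $h\in Q$) forces $\lambda\cdot\alpha = 0$. To show $\alpha$ is a positive root I would combine Lemma \ref{RootLemma0} with the dimension count coming from Schur: since $\om{End}_{\Pi^\lambda}(X)=\C$, the $G_\alpha^c$-orbit $\MC{O}_X\subset \mu_\C^{-1}(\lambda)$ has dimension $\alpha\cdot\alpha-1$, so the local dimension of $\mu_\C^{-1}(\lambda)$ at $X$ is at least the expected dimension $\alpha\cdot\alpha-1+2p(\alpha)$, forcing $p(\alpha)\geq 0$ and, through the reflection-invariant refinement, placing $\alpha$ in $\Phi^+$. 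For the strict inequality over decompositions, I would construct, given a decomposition $\alpha=\sum_t \beta^{(t)}$ into positive roots orthogonal to $\lambda$, the constructible subset of $\mu_\C^{-1}(\lambda)$ consisting of modules filtered with subquotients of dimensions $\beta^{(t)}$ (using any choice of simples in those dimensions, which exist inductively), bound its dimension by $\sum_t (\beta^{(t)}\cdot\beta^{(t)}-1+2p(\beta^{(t)})) + \text{gluing contributions}$, and compare with the dimension $\alpha\cdot\alpha-1+2p(\alpha)$ of the simple stratum. Strictness of the inequality encodes the openness of the simple locus.

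For (ii)$\Rightarrow$(i) my approach would be induction on $\sum_i \alpha_i$, using the reflection functors at loop-free vertices. Given $i\in I$ with $\alpha_i\neq 0$ and $(\alpha,\eps_i)>0$, the reflection functor (a Morita-type equivalence on the appropriate module categories) sends simple $\Pi^\lambda$-modules of dimension $\alpha$ to simple $\Pi^{s_i\cdot\lambda}$-modules of dimension $s_i(\alpha)$, strictly decreasing $\sum \alpha_j$. Tracking that the numerical condition in (ii) is preserved (since $W$ preserves both the root system and $p$, and permutes the relevant decompositions), one reduces to the case that $\alpha$ lies in the fundamental region $F$ or equals a simple root $\eps_i$ with $\lambda_i=0$. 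The base cases are then handled directly: for $\alpha=\eps_i$ the one-dimensional module at vertex $i$ works provided $\lambda_i=0$; for $\alpha\in F$ one constructs a simple module by a genericity argument in $\om{Rep}(\ovl{Q},\alpha)$, showing that for generic $x\in\mu_\C^{-1}(\lambda)$ there is no proper subrepresentation of dimension satisfying the root-decomposition constraint, a nonemptiness statement made possible precisely by the strict inequality hypothesis.

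Finally, for the complete intersection statement, once a simple $\Pi^\lambda$-module of dimension $\alpha$ is known to exist, its $G_\alpha^c$-orbit has dimension $\alpha\cdot\alpha-1$ and lies in $\mu_\C^{-1}(\lambda)$, which is cut out of $\om{Rep}(\ovl{Q},\alpha)$ by the $\dim G_\alpha^c - 1 = \alpha\cdot\alpha-1$ independent scalar equations coming from $\mu_\C$ landing in $\fg^c_{\alpha,0}$. Comparing dimensions shows the intersection is of the expected codimension, hence a complete intersection; reducedness and irreducibility follow from the smoothness of $\mu_\C^{-1}(\lambda)$ at the dense open simple locus together with a connectedness argument. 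The main obstacle by far is the sufficiency direction in the imaginary-root case inside $F$: the reflection functor machinery only reduces the problem, it does not solve it, and the explicit construction of a simple representation from the numerical data is the substantial content that makes the theorem a deep structural result rather than a formal corollary of the dimension counts.
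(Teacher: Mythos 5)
This statement is imported verbatim from \cite[Theorem~1.2]{Crawley01}; the paper you are reading offers no proof of it, so there is no internal argument to compare against. Your sketch does track the broad strategy of Crawley-Boevey's actual proof: reflection functors to reduce the existence direction to the fundamental region, and a stratification-by-representation-type dimension count for the converse and for the complete-intersection statement. But as a proof it has several genuine gaps, and one step as stated is wrong.

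First, the reflection functor $E_i$ is an equivalence between $\Pi^\lambda$-modules and $\Pi^{r_i\lambda}$-modules only when $\lambda_i\neq 0$; your reduction ``reflect at any $i$ with $(\alpha,\eps_i)>0$'' is not available when $\lambda_i=0$, and handling exactly this case (showing that if $\lambda_i=0$, $(\alpha,\eps_i)>0$ and $\alpha\neq\eps_i$ then condition (ii) already fails, via the decomposition $\alpha=\eps_i+(\alpha-\eps_i)$) is one of the key combinatorial lemmas you omit. Second, the base case $\alpha$ in the fundamental region --- constructing a simple representation from the numerical hypothesis --- is precisely the substantial content of the theorem; you acknowledge this but supply no argument, so the sufficiency direction is not proved. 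Third, in (i)$\Rightarrow$(ii) the claim that $\alpha$ is a positive root is asserted via a ``reflection-invariant refinement'' that is not an argument (note that a simple $\Pi^\lambda$-module is an indecomposable representation of $\ovl{Q}$, whose Kac root system is that of the graph with doubled edges, not of $Q$, so Kac's theorem does not apply directly). Finally, for the complete-intersection statement, exhibiting one component of the expected dimension $\alpha\cdot\alpha-1+2p(\alpha)$ (the closure of the orbit of a simple) gives only a lower bound; one must also prove that \emph{every} stratum of non-simple representations, indexed by decompositions of $\alpha$ into roots orthogonal to $\lambda$, has dimension strictly less than this --- the same estimate that underlies the strict inequality in (ii) --- and irreducibility requires a further argument. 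None of these estimates are carried out in your proposal, so what you have is a correct roadmap of \cite{Crawley01} rather than a proof.
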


\section{Extended Dynkin Quivers and their Root Systems}
In our later work it will be important to have a firm grip on the
relation between the Dynkin diagrams and root systems of type $ADE$ and
their extended counterparts of type $\wt{ADE}$. In this section we
briefly review the necessary root space theory, establish our notation
and prove two basic lemmas needed to effectively apply Theorem
\ref{CB-Theorem}.   

Let $K$ be a Dynkin diagram of type $A_n$, $D_n$, $E_6$, $E_7$ or $E_8$,
for short type $ADE$. Fix an identification of the set of vertices
with $\{1,2,\cdots,n\}$ for some $n\in \N$. The Cartan
matrix $C=(c_{ij})_{ij} \in M_n(\Z)$ of $K$ is then defined by $c_{ij}=2\delta_{ij}-a_{ij}$, where $a_{ij}=a_{ji}=1$ precisely when there is an edge connecting $i$ to $j$ in $K$ and otherwise $0$. The associated root system 
$\Phi\subset \Z^n$
is then constructed just as in the previous section using the pairing
$(v,w)_C \coloneqq  v\cdot Cw$. Note that this pairing is positive definite so in particular $\Phi=\Phi^{\om{re}}$. In particular, the coordinate
vectors $\{\eps_i :1\leq i\leq n\}$ is a set of simple roots for $\Phi$.
There is a unique maximal root $d\in \Phi^+\subset \Z^n$ with respect to the partial ordering $\leq$ on $\Z^n$ (see \cite[Section~10.4]{Humphreys78}). 
The extended Dynkin diagram 
$\wt{K}$ is constructed from $K$ by adjoining a single vertex $0$ and one edge connecting $0$ to $i$ if $(d,\eps_i)_C = 1$ for each $1\leq i\leq n$. The extended Cartan matrix $\tilde{C}$ is constructed from $\wt{K}$ in the same way $C$ was constructed from $K$. Explicitly, if we identify $\Z^{n+1}=\Z \eps_0\oplus \Z^n$,
\[ \wt{C} = \left( \begin{array}{cc} 2 & -d^tC \\ -Cd & C \end{array}\right). \]
The associated root system $\wt{\Phi}\subset \Z^{n+1}$ is then constructed
using the pairing $(v,w)_{\wt{C}}\coloneqq v^t\wt{C}w$. We have the following
useful description of the real roots in $\Phi$ and $\wt{\Phi}$
(see \cite[Proposition~5.10]{Kac90})
\begin{equation} \label{Real-Root-Criterion}
\Phi = \{ \alpha \in \Z^n :(\alpha,\alpha)_C =2\} \;\; \mbox{ and } \;\;
 \wt{\Phi}^{re} = \{\beta\in \Z^{n+1} :(\beta,\beta)_{\wt{C}}=2 \} .
\end{equation} 
To understand the imaginary roots in $\wt{\Phi}$ define a linear map
$\psi\colon \Z^{n+1}\ra \Z^n$ by $\psi(\eps_0)=-d$ and $\psi(\eps_i)=\eps_i$ for $1\leq i\leq n$. Then, using the above explicit description of $\wt{C}$, one obtains the following identity
\[ (v,w)_{\wt{C}} =(\psi(v),\psi(w))_C  .\]
As the latter pairing is positive definite one deduces that 
$(\cdot,\cdot)_{\wt{C}}$ is positive semi-definite. It follows by Lemma \ref{RootLemma0} that the set of imaginary roots must coincide with the
nonzero elements of $\om{Ker}(\psi)$, that is,
\[ \wt{\Phi}^{\om{im}}=\{r\delta :r\in \Z-\{0\}\}   \]
where $\delta = (1,d)^t\in \Z\eps_0\oplus \Z^n = \Z^{n+1}$ is the minimal positive imaginary root. 
We will need two lemmas concerning these root systems.

\begin{lemma} \label{Root-Bijection-Lemma} 
Define $\Sigma \coloneqq \{\beta \in \wt{\Phi} :0<\beta<\delta\}$.
Then the map $\psi\colon\Z^{n+1}\ra \Z^n$ restricts to a bijection
$\psi\colon \Sigma \ra \Phi$ with inverse given by
\[ \psi^{-1}(\alpha) = \left\{ \begin{array}{cc} (0,\alpha) & \mbox{ if } \alpha\in \Phi^+ \\
(1,d+\alpha) & \mbox{ if } \alpha\in \Phi^- \end{array} \right. \]
with respect to the decomposition $\Z^{n+1}=\Z\eps_0 \oplus \Z^n$.
Furthermore, the adjoint $\psi^*\colon \R^n\ra \R^{n+1}$, determined by
$\psi(\theta)\cdot \tau = \theta \cdot \psi^*(\tau)$ for
$\theta\in \Z^{n+1}$ and $\tau\in \R^n$, is given by
$\psi^*(\tau) = (-d\cdot \tau,\tau)$ and corestricts to an isomorphism
$\R^n\cong \delta^\perp \subset \R^{n+1}$. 
\end{lemma}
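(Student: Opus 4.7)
The plan is to construct the inverse $\iota\colon \Phi \to \Sigma$ explicitly using the formulas in the statement, verify that $\iota$ indeed lands in $\Sigma$ and satisfies $\psi \circ \iota = \mathrm{id}_\Phi$, and then show that every element of $\Sigma$ lies in the image of $\iota$. The adjoint formula for $\psi^*$ and its corestriction onto $\delta^\perp$ will follow from a direct computation on the standard basis. The key inputs are the real-root characterization \eqref{Real-Root-Criterion}, the identity $(v,w)_{\widetilde{C}} = (\psi(v),\psi(w))_C$ already recorded, the classification $\widetilde{\Phi}^{\mathrm{im}} = \{r\delta : r\in \mathbb{Z}\setminus\{0\}\}$, and the fact that $d$ is the maximal positive root so that $\alpha \leq d$ for all $\alpha \in \Phi^+$. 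I expect the main obstacle to be nothing more than careful bookkeeping to match the strict inequalities in $0 < \beta < \delta$ with the appropriate nonvanishing conditions on $\alpha$.

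For the bijection, I first check $\iota(\Phi) \subset \Sigma$. When $\alpha \in \Phi^+$, the bounds $0 < \alpha \leq d$ give $0 < (0,\alpha) < \delta$ (the right strict inequality coming from $\delta_0 = 1$), while the pairing identity yields $(\iota(\alpha),\iota(\alpha))_{\widetilde{C}} = (\alpha,\alpha)_C = 2$, so $\iota(\alpha) \in \widetilde{\Phi}^{\mathrm{re}}$ by \eqref{Real-Root-Criterion}. When $\alpha \in \Phi^-$, applying the same reasoning to $-\alpha \in \Phi^+$ gives $0 \leq d + \alpha \leq d$, and $\alpha \neq 0$ ensures $(1,d+\alpha) \neq \delta$ so that $\iota(\alpha) < \delta$. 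A direct substitution shows $\psi \circ \iota = \mathrm{id}_\Phi$. For surjectivity, let $\beta \in \Sigma$; since $0 < \beta < \delta$ rules out $\beta$ being a nonzero integer multiple of $\delta$, it is not imaginary, hence $\beta \in \widetilde{\Phi}^{\mathrm{re}}$ and $\psi(\beta) \in \Phi$ (the pairing value $2$ rules out $\psi(\beta)=0$). Because $\delta_0 = 1$, the constraint forces $\beta_0 \in \{0,1\}$; a case-split on this value and direct comparison with $\iota(\psi(\beta))$ yields $\beta = \iota(\psi(\beta))$, where the case $\beta_0 = 1$ uses $\beta \neq \delta$ to conclude $\psi(\beta) \neq 0$ and thus $\psi(\beta) \in \Phi^-$.

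For the adjoint, evaluating the defining relation $\psi(\theta) \cdot \tau = \theta \cdot \psi^*(\tau)$ on the basis vectors $\varepsilon_0,\ldots,\varepsilon_n$ immediately yields $\psi^*(\tau) = (-d\cdot\tau,\tau)$. Reading off the last $n$ coordinates shows $\psi^*$ is injective. For any $\tau \in \mathbb{R}^n$ one has $\delta \cdot \psi^*(\tau) = -d\cdot\tau + d\cdot\tau = 0$, so the image lies in $\delta^\perp$; conversely, any $\eta \in \delta^\perp$ satisfies $\eta_0 = -d\cdot(\eta_1,\ldots,\eta_n)$, so $\eta = \psi^*(\eta_1,\ldots,\eta_n)$, proving the image is all of $\delta^\perp$.
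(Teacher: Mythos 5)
Your proposal is correct and follows essentially the same route as the paper: it uses the compatibility $(v,w)_{\widetilde{C}}=(\psi(v),\psi(w))_C$ together with the real-root criterion to show both $\psi(\Sigma)\subset\Phi$ and that the explicit candidate inverse is well defined, then verifies the two compositions, and computes $\psi^*$ on the standard basis. The only cosmetic difference is that you verify the corestriction onto $\delta^\perp$ by direct coordinate computation where the paper invokes surjectivity of $\psi$ and the identity $\operatorname{Ker}(\psi)^\perp=\delta^\perp$; both are fine.
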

\begin{proof} Note first $\Sigma\subset \wt{\Phi}^{\om{re}}$ since
$\delta$ is the minimal positive imaginary root. As
$(\alpha,\beta)_{\wt{C}}=(\psi(\alpha),\psi(\beta))_{C}$ for all
$\alpha,\beta\in \Z^{n+1}$, it follows
from the description of the real roots in \eqref{Real-Root-Criterion} that
$\psi(\Sigma)\subset \Phi$. The same result shows that the map
$\kappa\colon \Phi\ra \Sigma$ given by $\kappa(\alpha)=(0,\alpha)$ if $\alpha\in \Phi^+$ and $\kappa(\beta)=(1,d+\beta)$ if $\beta\in \Phi^-$ is well-defined. Using the definition of $\psi$ one easily verifies that
$\psi\kappa=\om{id}_{\Phi}$ and $\kappa\psi = \om{id}_{\Sigma}$. Hence,
$\psi$ is a bijection with inverse $\psi^{-1}=\kappa$.

For the second part note that $\psi$ extends uniquely to a linear map
$\psi\colon \R^{n+1}\ra \R^n$ and hence has an adjoint 
$\psi^*:\R^n\ra \R^{n+1}$ uniquely determined by the formula given in the statement. For each $1\leq i \leq n$
we find $\psi^*(\tau)_i = \psi^*(\tau)\cdot \eps_i = \tau\cdot \psi(\eps_i)=\tau_i$, while $\psi^*(\tau_0) = \tau\cdot \psi(\eps_0)=-\tau\cdot d$. Thus $\psi^*(\tau) = (-d\cdot \tau,\tau)$. Finally, 
since $\psi:\R^{n+1}\ra \R^n$ is surjective, it follows that $\psi^*$ corestricts to an isomorphism onto $\om{Ker}(\psi)^\perp = \delta^\perp$.    
\end{proof}

In the following lemma we regard $\Phi\subset \Z^n\subset \R^n$ as above,
and we write $(\cdot,\cdot)\colon \Z^n\times \Z^n\ra \Z$ for the Cartan
pairing. 

\begin{lemma} \label{RootSpace-Decomp-Lemma} 
For $\tau \in \C^n$ define $\tau^\perp \coloneqq \om{Span}_\R(\om{Re}\tau,\om{Im}\tau)^\perp\subset \R^n$ with respect to the standard
scalar product on $\R^n$. Then if $\tau^\perp\cap \Phi$
is non-empty, it is a root system in the subspace it spans and decomposes into a disjoint union of root systems of type $ADE$
\[ \tau^\perp\cap \Phi = \Phi_1\cup \Phi_2\cup \cdots \cup \Phi_r .\]
Furthermore, $\Phi_j$ admits a unique base contained in
$\Phi^+$ for each $1\leq j\leq r$. 
\end{lemma}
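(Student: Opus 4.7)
The plan is to verify directly that $\Psi \coloneqq \tau^\perp \cap \Phi$ satisfies the axioms of a finite reduced crystallographic root system in the subspace $V \coloneqq \om{Span}_\R(\Psi)\subset \R^n$ equipped with the restriction of the Cartan pairing $(\cdot,\cdot)$, and then to invoke the classification of irreducible simply-laced root systems together with the standard theory of positive systems to handle the base. I will rely on Lemma \ref{RootLemma0} and the fact that $\Phi = \Phi^{\om{re}}$ to ensure $(\alpha,\alpha) = 2$ for every $\alpha \in \Phi$, together with the positive-definiteness of $(\cdot,\cdot)$ on $\R^n$ (valid since $\Phi$ is of $ADE$ type) to guarantee that its restriction to $V$ is an inner product.

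The key step will be closure of $\Psi$ under reflections. For any $\alpha,\beta\in\Psi$, the reflection $s_\alpha(\beta)=\beta-(\beta,\alpha)\alpha$ is a linear combination of $\alpha$ and $\beta$; since $\tau^\perp$ is a linear subspace of $\R^n$ containing both of them, $s_\alpha(\beta)$ lies in $\tau^\perp$, and since $s_\alpha(\Phi)\subset\Phi$ one obtains $s_\alpha(\beta)\in\Psi$. Crystallographic integrality of Cartan integers and reducedness are inherited directly from the ambient root system $\Phi$, so $\Psi$ is indeed a root system in $V$. From standard root-system theory I will then obtain an orthogonal decomposition into irreducible components $\Psi=\Phi_1\cup\cdots\cup\Phi_r$. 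Because every element of $\Psi$ has squared length $2$, each $\Phi_j$ is simply-laced, and the classification of irreducible finite reduced crystallographic simply-laced root systems forces each $\Phi_j$ to be of type $A_n$, $D_n$, $E_6$, $E_7$, or $E_8$.

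For the final claim about the base, I will set $\Phi_j^+ \coloneqq \Phi_j \cap \Phi^+$. Intersecting the decomposition $\Phi = \Phi^+ \sqcup (-\Phi^+)$ with $\Phi_j$ yields $\Phi_j = \Phi_j^+ \sqcup (-\Phi_j^+)$, and since $\Phi^+ = \Phi \cap \Z_{\geq 0}^n$ is additively closed within $\Phi$, it follows that $\Phi_j^+$ is a positive system in $\Phi_j$. The standard theory then yields a unique base $\Delta_j$ consisting of those elements of $\Phi_j^+$ that are indecomposable as sums of two elements of $\Phi_j^+$, and this base is contained in $\Phi^+$ by construction. Any other base of $\Phi_j$ lying in $\Phi^+$ would induce, via nonnegative integer combinations, a positive system contained in $\Phi^+$, hence equal to $\Phi_j^+$, and would therefore coincide with $\Delta_j$. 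I do not expect a serious obstacle; the only mildly subtle point is that $\tau^\perp$ is defined with respect to the standard dot product on $\R^n$ rather than the Cartan pairing, but this is immaterial since the reflection-closure argument uses only that $\tau^\perp$ is a linear subspace.
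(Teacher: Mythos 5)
Your proposal is correct and follows essentially the same route as the paper: both establish the root-system property via closure under reflections inside the subspace $\tau^\perp$ (the paper cites the corresponding Humphreys exercise), deduce type $ADE$ from the common root length, and obtain the base in $\Phi^+$ as the indecomposable elements of the induced positive system $\Phi_j\cap\Phi^+$. The only notable differences are that you decompose into irreducible components before constructing the base (the paper constructs one base for all of $\tau^\perp\cap\Phi$ and then splits it into orthogonal indecomposable pieces), and that you spell out the uniqueness of the base, which the paper's proof leaves implicit.
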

\begin{proof} Write $\Phi_\tau=\tau^\perp\cap \Phi$. The fact that 
$\tau^\perp\cap \Phi$ is a root system in the subspace it spans follows
from \cite[Exercise~III.9.7]{Humphreys78}. To see that $\Phi_\tau$
admits a base contained in $\Phi^+\subset \Z^n_{\geq 0}$ we mimic the proof
for the existence of bases in a root system in \cite[p.~48]{Humphreys78}.  
We may write $\Phi_\tau = \Phi_\tau^+\cup \Phi_\tau^-$ where
$\Phi_\tau^{\pm} = \tau^\perp \cap \Phi^\pm$. As $\Phi_\tau^- = -\Phi_\tau^+$, it follows that $\Phi_\tau$ is nonempty if and only if
$\Phi_\tau^+$ is nonempty. We may therefore define $S\subset \Phi_\tau^+$ to be the subset of $\alpha\in \Phi_\tau^+$ that admits
no decomposition $\alpha = \beta +\gamma$ for $\beta,\gamma\in \Phi_\tau^+$. This set is nonempty since any
$\alpha =\sum_i a_i\eps_i\in \Phi_\tau^+$, $a_i \geq 0$, with $\sum_i a_i$ minimal must belong to $S$.
For any pair $\alpha\neq\beta \in S$ we have have $(\alpha,\beta)\leq 0$.
Indeed, if $(\alpha,\beta)=1$, then either $\alpha-\beta$ or $\beta-\alpha$ will belong to $\Phi_\tau^+$ contradicting either $\alpha\in S$ or $\beta\in S$. To see that the set $S$ is linearly independent,
suppose that $\sum_{s\in S} a_ss=0$. Put $S_1 = \{s\in S:a_s>0\}$, 
$S_2 = S-S_1$ and write $u = \sum_{s\in S_1} a_ss = \sum_{t\in S_2} b_tt$
where $a_s>0$ and $b_t = -a_t\geq 0$. Then
\[ (u,u)=  \sum_{s,t} a_sb_t(s,t)\leq 0,  \]
which is only possible if $u = 0$. Hence, as each $s\in S$ is nonzero
and has non-negative coefficients with respect to the standard basis
$\eps_i$, $1\leq i\leq n$, it follows that $a_s=0$ for all $s\in S$
as required. It is clear that every root $\alpha\in \Phi_\tau^+$
can be written as a positive integral linear combination of the
elements of $S$ and we have thus verified that $S$ is a base for $\Phi_\tau$. 
At this point we may decompose $S=S_1\cup S_2\cup \cdots \cup S_r$
into pairwise orthogonal sets in such a way that each $S_i$ is indecomposable, i.e., admits no further decomposition into pairwise
orthogonal sets. This yields a corresponding 
decomposition into irreducible root systems (see \cite[Section~10.4]{Humphreys78}) 
$\Phi_\tau = \Phi_1\cup \Phi_2\cdots \cup \Phi_r,$
where $S_i$ is a base for $\Phi_i$ for each $1\leq i\leq r$. As each
$\Phi_j$ is contained in $\Phi$, all the roots have the same length
and this implies that $\Phi_j$ must be of type $ADE$ for each $j$. 
\end{proof}

The graphs $K$ and $\wt{K}$ are transformed into quivers by giving
the edges arbitrary orientations. As already mentioned the corresponding
symmetric bilinear forms and root systems are independent of the
choice of orientations. In particular, if $Q$ is an extended Dynkin
quiver we may identify the set of vertices with $\{0,1,\cdots,n\}$
for some $n\in \N$ and assume that we have root systems $\wt{\Phi}\subset \Z^{n+1}$,
$\Phi\subset \Z^n$ such that the minimal positive imaginary root
$\delta$ takes the form $(1,d)\in \Z\eps_0\oplus \Z^n$, where
$d\in \Phi$ is the maximal positive root. Furthermore, by Lemma \ref{Root-Bijection-Lemma} we have the
map $\psi\colon \Z^{n+1}\ra \Z^n$ relating them and the adjoint $\psi^*:\R^n\ra \R^{n+1}$ that allows us to identify $\R^n\cong \delta^\perp$. We will work under these assumptions whenever convenient in the rest of the paper.  
 
\section{Classification of Singularities}
Let $Q$ be an extended Dynkin quiver with vertex set $I$ and minimal
imaginary root $\delta\in \Z^I$. In this section we will give a
description of the singular set in the quiver variety 
$\MC{M}_{(0,\lambda)}(Q,\delta)$ for $\lambda\in \C^I$.
According to Proposition \ref{Semi-Simp-Quiver-Eq-Prop} the singular set
$\MC{M}^{\om{sing}}_{(0,\lambda)}(Q,\delta)$ is in natural bijection with
the isomorphism classes of semi-simple, non-simple $\Pi^\lambda$-modules
of dimension $\delta$, so it suffices to determine the latter set. 

For this purpose let $\wt{\Phi}$ denote the root system associated with
$Q$ and let $\Sigma=\{\alpha \in \wt{\Phi}:0<\alpha<\delta\}$ as in Lemma \ref{Root-Bijection-Lemma}. For $\lambda\in \C^I$ define $\Sigma_\lambda = \{\alpha\in \Sigma:\alpha\cdot \lambda=0\}$ and let this set be partially ordered by $\alpha\prec \beta$ if and only if
$\beta-\alpha = \sum_t \gamma^{(t)}$ for some $\gamma^{(t)}\in \Sigma_\lambda$.
Finally, let $\Sigma_\lambda^{\om{min}}\subset \Sigma_\lambda$ denote the subset of
minimal elements with respect to this partial ordering. 

\begin{lemma} \label{Simp-Existence-Lemma} 
There exists a simple $\Pi^\lambda$-module of dimension
$\delta$ if and only if $\delta\cdot \lambda=0$. Moreover, there exists
a simple $\Pi^\lambda$-module of dimension $\alpha$ satisfying $0<\alpha<\delta$ if and only if $\alpha\in \Sigma_\lambda^{\om{min}}$ and in that case the simple module
is unique up to isomorphism. 
\end{lemma}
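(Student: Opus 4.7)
The plan is to derive the lemma as a direct consequence of Crawley--Boevey's Theorem \ref{CB-Theorem}. Three preliminary observations will drive the argument. First, since $\delta$ is an imaginary root with $\psi(\delta)=0$, the identity $(v,w)_{\wt{C}}=(\psi(v),\psi(w))_C$ from Lemma \ref{Root-Bijection-Lemma} forces $(\delta,\delta)=0$, so $p(\delta)=1$. Second, for any $\alpha\in\Sigma$ the equality $(\alpha,\alpha)=2$ from Lemma \ref{RootLemma0} gives $p(\alpha)=0$. Third, since $\delta$ is the minimal positive imaginary root, every positive root $\beta$ strictly less than $\delta$ lies in $\Sigma$ and is therefore real. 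I will interpret the decompositions appearing in condition (ii) of Theorem \ref{CB-Theorem} as consisting of at least two summands, in accordance with Crawley--Boevey's convention.

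For the first assertion concerning $\alpha=\delta$, I would verify conditions (i) and (ii) of Theorem \ref{CB-Theorem}. Condition (i) reduces to $\lambda\cdot\delta=0$, while for condition (ii), any non-trivial decomposition $\delta=\sum_{t=1}^{r}\beta^{(t)}$ into positive roots forces each $\beta^{(t)}<\delta$, hence real, so $p(\beta^{(t)})=0$ and $\sum_{t}p(\beta^{(t)})=0<1=p(\delta)$. Therefore (ii) is automatic and a simple $\Pi^\lambda$-module of dimension $\delta$ exists precisely when $\lambda\cdot\delta=0$.

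For the second assertion concerning $0<\alpha<\delta$, condition (i) translates into $\alpha\in\Sigma_\lambda$. Given $\alpha\in\Sigma_\lambda$, any non-trivial decomposition $\alpha=\sum_{t}\beta^{(t)}$ into positive roots with $\lambda\cdot\beta^{(t)}=0$ forces each $\beta^{(t)}\in\Sigma_\lambda$, yielding $\sum_{t}p(\beta^{(t)})=0=p(\alpha)$ and violating the required strict inequality. Hence (ii) is equivalent to $\alpha$ admitting no non-trivial decomposition into elements of $\Sigma_\lambda$, which I will unpack as the condition $\alpha\in\Sigma_\lambda^{\om{min}}$: a relation $\alpha'\prec\alpha$ with $\alpha'\in\Sigma_\lambda$ and $\alpha'\neq\alpha$ amounts to a non-trivial expression $\alpha=\alpha'+\sum_{t}\gamma^{(t)}$ in $\Sigma_\lambda$, and conversely.

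Finally, to establish uniqueness of the simple module when $\alpha\in\Sigma_\lambda^{\om{min}}$, I would invoke the second part of Theorem \ref{CB-Theorem}: the variety $\mu_\C^{-1}(\lambda)\subset\om{Rep}(\ovl{Q},\alpha)$ is irreducible of dimension $\alpha\cdot\alpha-1+2p(\alpha)=\alpha\cdot\alpha-1$. Since Schur's lemma gives $\om{Aut}_{\Pi^\lambda}(X)\cong\C^*$ for any simple $\Pi^\lambda$-module $X$ of dimension $\alpha$, the $G_\alpha^c$-orbit of any representative of $X$ has dimension $\alpha\cdot\alpha-1$, matching the ambient variety. Hence this orbit is open and dense in $\mu_\C^{-1}(\lambda)$, and irreducibility forces it to be the unique such orbit, so $X$ is unique up to isomorphism. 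The substantive input is therefore the irreducibility portion of Theorem \ref{CB-Theorem}, which I will take as a black box; the existence half of the lemma is then essentially bookkeeping given the three preliminary observations.
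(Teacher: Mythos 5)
Your proposal is correct and follows essentially the same route as the paper: both arguments reduce the lemma to Theorem \ref{CB-Theorem} via the observations $p(\delta)=1$, $p=0$ on $\Sigma$, and the fact that all summands in a nontrivial decomposition of $\delta$ (or of $\alpha<\delta$) lie in $\Sigma$, so that condition (ii) is automatic for $\delta$ and equivalent to minimality in $\Sigma_\lambda$ for $\alpha<\delta$. The only difference is that for uniqueness the paper simply cites the explanation in Crawley--Boevey, whereas you reproduce that dimension-count/irreducibility argument explicitly; your version of it is correct.
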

\begin{proof} According to Theorem \ref{CB-Theorem} there exists a simple
$\Pi^\lambda$-module of dimension $\alpha \in \Z^I_{\geq 0}$ if and only if
$\alpha$ is a root satisfying $\alpha\cdot \lambda=0$ and for every decomposition $\alpha = \sum_t \beta^{(t)}$ into positive roots satisfying 
$\beta^{(t)}\cdot \lambda=0$, it holds true that $p(\alpha)>\sum_t p(\beta^{(t)})$, where we recall that $p(\alpha)=1-\frac12 (\alpha,\alpha)$. 
In our case of an extended Dynkin quiver we have $p(\delta)=1-\frac12 (\delta,\delta)=1$ and $p(\alpha)=1-\frac12 (\alpha,\alpha)=0$ for every real root
$\alpha \in \wt{\Phi}^{\om{re}}$. In any decomposition $\delta = \sum_t \beta^{(t)}$
into positive roots with at least two summands the roots $\beta^{(t)}$ must be real because $\delta$ is the minimal positive imaginary root. Therefore, the
condition $p(\delta)=1>0=\sum_t p(\beta^{(t)}$ is trivially satisfied. We
conclude that there exists a simple $\Pi^\lambda$-module of dimension 
$\delta$ if and only if $\delta\cdot \lambda=0$.
 
If $\alpha$ satisfies $0<\alpha <\delta$, there exists a simple $\Pi^\lambda$-module of dimension $\alpha$ if and only if $\alpha\in \Sigma_\lambda$ and
for every decomposition $\alpha=\sum_t \beta^{(t)}$ with $\beta^{(t)}\in \Sigma_\lambda$ it holds true that $p(\alpha)>\sum_t p(\beta^{(t)})$.
This inequality is never satisfied since both sides reduce to
zero. Consequently, the above condition can only be satisfied if
$\alpha$ does not admit such a decomposition at all and this is equivalent
to $\alpha\in \Sigma^{\om{min}}_\lambda$. The fact that the simple 
$\Pi^\lambda$-module is unique up to isomorphism in this case follows from the final part of Theorem \ref{CB-Theorem} as explained in \cite[p.~260]{Crawley01}.
\end{proof}    

Before we proceed we record the following consequence.

\begin{lemma} \label{Connectivity}
Let $Q$ be an extended Dynkin quiver with vertex set $I$
and minimal imaginary root $\delta$. Let $\lambda\in \C^I$ satisfy
$\lambda\cdot \delta=0$.  Then the quiver variety $\MC{M}_{(0,\lambda)}(Q,\delta)$ is connected and $\MC{M}_{(0,\lambda)}^{\om{reg}}(Q,\delta)$ is nonempty. \end{lemma}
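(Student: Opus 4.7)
The plan is to handle the two assertions independently, each by a short application of results already assembled.

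For the nonemptiness of the regular set, the hypothesis $\lambda\cdot \delta=0$ is exactly what Lemma \ref{Simp-Existence-Lemma} requires to produce a simple $\Pi^\lambda$-module $X$ of dimension $\delta$. By the bijection $\rho$ of Proposition \ref{Semi-Simp-Quiver-Eq-Prop}, the isomorphism class $[X]$ corresponds to a well-defined point $x\in \MC{M}_{(0,\lambda)}(Q,\delta)$, and the final clause of that proposition guarantees $x\in \MC{M}^{\om{reg}}_{(0,\lambda)}(Q,\delta)$, since simple modules give precisely the regular points.

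For connectedness, I would invoke the uniform irreducibility statement in Theorem \ref{CB-Theorem}. Because we have just produced a simple $\Pi^\lambda$-module of dimension $\delta$, the theorem applies with $\alpha=\delta$ and tells us that $\mu_\C^{-1}(\lambda)\subset \om{Rep}(\ovl{Q},\delta)$ is a reduced and irreducible (complete intersection) affine variety. Its coordinate ring $\C[\mu_\C^{-1}(\lambda)]$ is therefore an integral domain, and the subring $\C[\mu_\C^{-1}(\lambda)]^{G_\delta^c}$ of $G_\delta^c$-invariants is a fortiori an integral domain. Hence the affine GIT quotient
\[ \mu_\C^{-1}(\lambda)//G_\delta^c \;=\; \om{Spec}\,\C[\mu_\C^{-1}(\lambda)]^{G_\delta^c} \]
is an irreducible complex affine variety; in particular it is connected in the analytic topology (since connected components of a complex variety are Zariski closed, an irreducible variety has a single component).

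Finally, Theorem \ref{Quotient-Equivalence-Theorem} supplies a homeomorphism $\MC{M}_{(0,\lambda)}(Q,\delta)\cong \mu_\C^{-1}(\lambda)//G_\delta^c$, so connectedness of the right-hand side transfers to $\MC{M}_{(0,\lambda)}(Q,\delta)$. There is no real obstacle here: the only mildly nontrivial ingredient is the passage from irreducibility of $\mu_\C^{-1}(\lambda)$ to irreducibility of the GIT quotient, which is the standard observation that a subring of an integral domain is a domain. An alternative route, if one prefers to avoid invoking the irreducibility half of Theorem \ref{CB-Theorem}, is to pick $\xi_\R\in D_\delta-\bigcup_{\theta\in R_+(\delta)}D_\theta$, note that by Theorem \ref{Resolution-Sing-Theorem} the resulting proper holomorphic map $\pi\colon \MC{M}_{\wt{\xi}}(Q,\delta)\ra \MC{M}_{(0,\lambda)}(Q,\delta)$ is surjective (its image is closed and contains the dense regular set), and then use that $\MC{M}_{\wt{\xi}}(Q,\delta)$ is diffeomorphic to the minimal resolution of $\C^2/\Gamma$ by Kronheimer's theorem recalled in the introduction, hence connected.
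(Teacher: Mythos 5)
Your proposal is correct and follows essentially the same route as the paper: nonemptiness of the regular set via Lemma \ref{Simp-Existence-Lemma} and Proposition \ref{Semi-Simp-Quiver-Eq-Prop}, and connectedness via the irreducibility of $\mu_\C^{-1}(\lambda)$ from the last part of Theorem \ref{CB-Theorem} together with the homeomorphism of Theorem \ref{Quotient-Equivalence-Theorem}. The only (harmless) difference is that the paper passes from irreducibility to analytic connectedness of the fiber and then takes the continuous image, whereas you transport irreducibility to the GIT quotient first; both work.
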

\begin{proof} By the above lemma there exists a simple $\Pi^\lambda$-module
of dimension $\delta$ in this situation. By Proposition \ref{Semi-Simp-Quiver-Eq-Prop} this implies that $\MC{M}_{(0,\lambda)}^{\om{reg}}(Q,\delta)$ is
nonempty. Furthermore, by the final part of Theorem \ref{CB-Theorem} the
variety $\mu_\C^{-1}(\lambda)$ is irreducible in the Zariski topology.
It is therefore connected in the analytic topology and it follows that
the quotient $\MC{M}_{(0,\lambda)}(Q,\delta)\cong \mu_\C^{-1}(\lambda)//G_\delta^c$ is connected as well. \end{proof} 

In the following theorem we make the assumptions on the extended
Dynkin quiver $Q$ as explained in the end of the previous section.

\begin{theorem} \label{SemiSimpClass}
Let $Q$ be an extended Dynkin quiver with vertex set $\{0,1,\cdots,n\}$ and
let $\Pi^\lambda$ be the associated deformed preprojective algebra
of weight $\lambda\in \C^{n+1}$ satisfying $\lambda\cdot \delta=0$. 
Let $\Phi\subset \Z^n$ be the root system of type $ADE$ associated with
$Q$. Write $\lambda = (\lambda_1,\tau)$ where $\lambda_1\in \C$ and 
$\tau\in \C^n$ and let
\[ \tau^\perp \cap \Phi = \Phi_1\cup \cdots \cup \Phi_r  \]
be a decomposition into (irreducible) subsystems of type $ADE$ as in
Lemma \ref{RootSpace-Decomp-Lemma}. Then there is
a bijection between $\{\Phi_1,\cdots,\Phi_r\}$ and the isomorphism classes
of semi-simple, non-simple $\Pi^\lambda$-modules
of dimension $\delta$. \end{theorem}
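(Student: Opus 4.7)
The plan is to combine Proposition \ref{Semi-Simp-Quiver-Eq-Prop} and Lemma \ref{Simp-Existence-Lemma} to reduce the classification to an enumeration of decompositions of $\delta$ into dimensions drawn from $\Sigma_\lambda^{\om{min}}$, identify $\Sigma_\lambda$ with $\tau^\perp\cap\Phi$ via $\psi$, pin down $\Sigma_\lambda^{\om{min}}$ explicitly in terms of the decomposition $\tau^\perp\cap\Phi=\Phi_1\cup\cdots\cup\Phi_r$, and finally show that exactly one admissible decomposition of $\delta$ arises from each component $\Phi_{j}$.

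First I would reduce the problem. By Proposition \ref{Semi-Simp-Quiver-Eq-Prop} a semi-simple $\Pi^\lambda$-module of dimension $\delta$ decomposes as $\bigoplus_j n_j X_j$ with each $X_j$ simple of some dimension $\alpha_j$, subject to $\sum_j n_j\alpha_j=\delta$. By Lemma \ref{Simp-Existence-Lemma} and the hypothesis $\lambda\cdot\delta=0$, each $\alpha_j$ lies in $\Sigma_\lambda^{\om{min}}\cup\{\delta\}$, and the isomorphism class of $X_j$ is determined by $\alpha_j$. If some $\alpha_j=\delta$ the module is simple, so the non-simple isomorphism classes are in bijection with unordered decompositions $\delta=\sum_\alpha n_\alpha\,\alpha$ into elements $\alpha\in\Sigma_\lambda^{\om{min}}$.

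Next I would describe $\Sigma_\lambda$ using Lemma \ref{Root-Bijection-Lemma}. The condition $\lambda\cdot\delta=\lambda_1+d\cdot\tau=0$ yields $\psi^{-1}(\alpha)\cdot\lambda=\alpha\cdot\tau$ for every $\alpha\in\Phi$: this is immediate for $\alpha\in\Phi^+$, and for $\alpha=-\beta\in\Phi^-$ one has $(1,d-\beta)\cdot(\lambda_1,\tau)=\lambda_1+d\cdot\tau-\beta\cdot\tau=-\beta\cdot\tau=\alpha\cdot\tau$. Hence $\psi$ restricts to a bijection $\Sigma_\lambda\cong\tau^\perp\cap\Phi=\Phi_1\cup\cdots\cup\Phi_r$. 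For each $j$ let $S_j\subset\Phi_j^+$ denote the canonical base provided by Lemma \ref{RootSpace-Decomp-Lemma}, and let $d_j\in\Phi_j^+$ denote the unique highest root of the irreducible ADE system $\Phi_j$.

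The heart of the proof, and the main obstacle, is identifying
\[ \Sigma_\lambda^{\om{min}}=\bigl\{(0,s):s\in S_1\cup\cdots\cup S_r\bigr\}\cup\bigl\{(1,d-d_j):1\leq j\leq r\bigr\}. \]
I would split according to the $\eps_0$-coordinate. Any nontrivial $\Sigma_\lambda$-decomposition of $(0,\beta)$ must have every summand of $\eps_0$-type $0$, so via $\psi$ it corresponds to a decomposition of $\beta$ as a sum of positive roots in $\Phi_\tau^+=\bigcup_j\Phi_j^+$; hence $(0,\beta)$ is minimal iff $\beta$ belongs to the canonical base $S_1\cup\cdots\cup S_r$. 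Likewise, since the $\eps_0$-coordinates of a decomposition of $(1,d-\beta)$ must sum to $1$, exactly one summand has type $1$ and the rest have type $0$, so $(1,d-\beta)$ admits a nontrivial $\Sigma_\lambda$-decomposition iff there exists $\beta^{(1)}\in\Phi_\tau^+$ with $\beta^{(1)}=\beta+\sum_t\delta^{(t)}$ for some nonempty collection of $\delta^{(t)}\in\Phi_\tau^+$. Because the $\Phi_j$ are pairwise orthogonal and irreducible, projecting this relation onto the span of any $\Phi_j$ shows that such a chain stays within a single component $\Phi_j^+$ and coincides with the usual dominance order; its unique maximum is the highest root $d_j$. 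Therefore $(1,d-\beta)$ is minimal iff $\beta=d_j$ for some $j$, establishing the claimed description of $\Sigma_\lambda^{\om{min}}$.

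Finally, I would count decompositions. Given $\delta=\sum_\alpha n_\alpha\,\alpha$ with $\alpha\in\Sigma_\lambda^{\om{min}}$, equating $\eps_0$-coordinates forces exactly one summand of the form $(1,d-d_{j_0})$ with multiplicity one, and the remainder becomes $d_{j_0}=\sum_{j,i}m^{(j)}_i s^{(j)}_i$ in $\Z^n$. Since the $S_j$ are pairwise orthogonal and linearly independent, this coincides with the unique expansion of $d_{j_0}$ in the simple roots $S_{j_0}$; in particular $m^{(j)}_i=0$ for $j\neq j_0$ and the coefficients $m^{(j_0)}_i$ are uniquely determined. Thus each $j_0\in\{1,\dots,r\}$ yields exactly one decomposition of $\delta$, and the total number of simple summands $1+\sum_i m^{(j_0)}_i\geq 2$ ensures that the associated semi-simple module is non-simple. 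This produces the desired bijection $\Phi_{j_0}\leftrightarrow [X_{j_0}]$.
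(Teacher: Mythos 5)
Your proposal is correct and follows essentially the same route as the paper: reduce via Proposition \ref{Semi-Simp-Quiver-Eq-Prop} and Lemma \ref{Simp-Existence-Lemma} to counting decompositions $\delta=\sum_t n_t\gamma_t$ with $\gamma_t\in\Sigma_\lambda^{\om{min}}$, transfer to $\Phi\cap\tau^\perp$ via $\psi$, isolate the unique summand with $\eps_0$-coordinate $1$, and match it with the highest root and base of a single component $\Phi_j$. Your explicit description of $\Sigma_\lambda^{\om{min}}$ (including the projection argument showing the dominance chain stays in one component) just spells out a step the paper states more briefly.
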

\begin{proof} Let $\wt{\Phi}\subset \Z^{n+1}$ be the root system 
associated with $Q$. Let $\Sigma_\lambda^{\om{min}}\subset \Sigma_\lambda \subset \Sigma\subset \wt{\Phi}$ be defined as in the beginning of the section.
The content of Lemma \ref{Simp-Existence-Lemma} is then that there
exists a simple $\Pi^\lambda$-module of dimension $\alpha$, $0<\alpha<\delta$
if and only if $\alpha\in \Sigma_\lambda^{\om{min}}$ and in that case the
module is unique up to isomorphism. This implies that 
a semi-simple, non-simple $\Pi^\lambda$-module $X=\sum_{t=0}^k n_tX_t$ of dimension $\delta$ is uniquely determined up to isomorphism by the
roots $\gamma_t\coloneqq  \om{dim}X_t\in \Sigma_\lambda^{\om{min}}$ and the 
multiplicities $n_t\in \N$. 
We therefore have a bijective correspondence between the isomorphism classes
of semi-simple, non-simple $\Pi^\lambda$-modules of dimension $\delta$ and sets
$\{(n_t,\gamma_t)\}_{t=0}^k$ for which $n_t\in \N$,
$\gamma_t\in \Sigma_\lambda^{\om{min}}$ for each $t$, $\delta=\sum_t n_t\gamma_t$
and either $k\geq 1$ or $n_0>1$.

Our task is to relate the collection of such sets with the root systems
in the decomposition
\[ \tau^\perp\cap \Phi=\Phi_1\cup \cdots \cup \Phi_r \]
given in the statement of the theorem. Suppose that $\{(n_t,\gamma_t)\}_{t=0}^k$
is such a set. As $\delta = (1,d)\in \Z\oplus \Z^n$, where $d\in \Phi$ is the
maximal root, the condition $\delta=\sum_t n_t\gamma_t$ implies that there
is a distinguished root $\gamma_t$ with nonzero first component and thus necessarily $n_t=1$.
After possibly rearranging the roots we may take this root to be $\gamma_0$. 
By Lemma \ref{Root-Bijection-Lemma} there are unique positive roots 
$\beta,\alpha_t\in \Phi^+$, $1\leq t\leq k$, such that 
$\gamma_0 =\psi^{-1}(-\beta)= (1,d-\beta)$ and $\gamma_t = \psi^{-1}(\alpha_t)= (0,\alpha_t)$ for $1\leq t\leq k$. Moreover, since $\lambda\cdot \delta=0$,
there is a unique $\tau\in \C^n$ such that $\lambda = (-d\cdot \tau,\tau)=\psi^*(\tau)$. The relation $\theta\cdot \lambda = \psi(\theta)\cdot \tau$ for each $\theta\in \Z^{n+1}$ ensures that the bijection
$\psi\colon \Sigma \cong \Phi$ restricts to a bijection
$\Sigma_\lambda \cong \Phi\cap \tau^\perp$. In particular,
$\beta,\alpha_1,\cdots,\alpha_k\in \Phi\cap \tau^\perp$.  
Moreover, the minimality of $\gamma_t=(0,\alpha_t)$, $1\leq t\leq k$, translates to the fact that each $\alpha_t$ is minimal among the
roots in $\Phi^+\cap \tau^\perp$, while the minimality of 
$\gamma_0=(1,d-\beta)$ translates to the fact that 
$\beta\in \Phi^+\cap \tau^\perp$ is maximal.
This means that $\beta$ must be the unique maximal positive root in precisely one of the systems $\Phi_j$ occurring in the decomposition of $\Phi\cap \tau^\perp$. Furthermore, since the equality $\delta = \sum_t n_t\gamma_t$
is equivalent to the equality $\beta=\sum_t n_t\alpha_t$, we also
deduce that $\{\alpha_t:1\leq t\leq k\} $ must be the unique positive base in the same system. 

This procedure is clearly reversible. Given a system $\Phi_j$ let
$\alpha_t$, $1\leq t\leq k$ be the unique positive base and let
$\beta = \sum_t n_t\alpha_t$ be the maximal root. We may then define
$\gamma_0 = (1,d-\beta)\in \Sigma_\lambda^{\om{min}}$, $n_0=1$ and 
$\gamma_t = (0,\alpha_t)\in \Sigma_\lambda^{\om{min}}$ for $1\leq t\leq k$. It
then follows from our previous arguments that the set $\{(n_t,\gamma_t\}_{t=1}^k$ satisfies the required conditions:
$n_t\in \N$, $\gamma_t\in \Sigma_\lambda^{\om{min}}$ for all $t$ and $\sum_t n_t\gamma_t=\delta$. 
This completes the proof of the theorem. 
\end{proof}
 
\section{Local Structure and the Proof of Theorem \ref{Singularity-Theorem}}
The combination of Proposition \ref{Semi-Simp-Quiver-Eq-Prop} and
Theorem \ref{SemiSimpClass} give full control over the singularities
in $\MC{M}_{(0,\lambda)}(Q,\delta)$ for an extended Dynkin quiver $Q$.
In this section we establish the final results needed to complete
the proof of Theorem \ref{Singularity-Theorem}.  

Let $Q$ be a quiver with vertex set $I$
and let $\lambda\in \C^I$ be a parameter. Given a point $x\in \mu^{-1}(\lambda)$ consider the sequence
\[ \begin{tikzcd} G^c_v \arrow{r}{b_x} & \om{Rep}(\ovl{Q},\delta)
\arrow{r}{\mu_\C} & \fg_v^c ,\end{tikzcd}  \]
where $b_x(g) = g\cdot x$ is the orbit map at $x$. As $\mu_\C$ is
$G^c_v$-equivariant and $\lambda\in \C^I$ is identified with an
element of $Z(\fg^c_v)$, the composition $\mu_\C\circ b_x$ is the
constant map at $\lambda$. Hence, by differentiating this sequence at
$1\in G^c_v$ we obtain a three term complex
\begin{equation} \label{Complex1}
\begin{tikzcd} 0\arrow{r} & \fg_v^c \arrow{r}{\sigma_x} & \om{Rep}(\ovl{Q},v)
\arrow{r}{\nu_x} & \fg_v^c \arrow{r} & 0 \end{tikzcd}
\end{equation}    
where $\sigma_x = d(b_x)_1$ and $\nu_x = d(\mu_\C)_x$. 
If $x\in \mu_\R^{-1}(0)$ such that the orbit $G_v^c\cdot x$ is closed
and hence an embedded complex submanifold of $\om{Rep}(\ovl{Q},v)$, one may identify $\om{Im}(\sigma_x)=T_x(G^c_v\cdot x)$. By general properties of the
moment map it holds true that $\om{Ker}(\nu_x) = \om{Im}(\sigma_x)^{\omega_\C}$, where the upper case $\omega_\C$ denotes the complex symplectic complement.
In particular, the space $T_x(G^c_v\cdot x)$ is isotropic with respect to $\omega_\C$. Moreover, the stabilizer $H \coloneqq (G^c_v)_x$ acts linearly on all the spaces involved and the maps $\sigma_x$ and $\nu_x$ are $H$-equivariant. Therefore, $T_x(G^c_v \cdot x)^{\omega_\C}/T_x(G^c_v\cdot x)= \om{Ker}(\nu_x)/\om{Im}(\sigma_x)$ obtains a complex symplectic
form preserved by the induced action of $H$. 

\begin{definition} \label{Def-Symp-Slice}
Let $x\in \mu^{-1}(0,\lambda)$. Then the complex
symplectic slice at $x$ is the complex symplectic $(G_v^c)_x$-representation 
\[ T_x(G^c_v\cdot x)^{\omega_\C}/T_x(G^c_v\cdot x) = \om{Ker}(\nu_x)/\om{Im}(\sigma_x)  .\]
\end{definition}  

The following result is a consequence of \cite[Theorem~1.4(iv)]{Mayrand18}.
Here we regard $\MC{M}_{(0,\lambda)}(Q,v)$ as a complex analytic space
using Theorem \ref{Quotient-Equivalence-Theorem}. 

\begin{lemma} \label{Local-Structure-Lemma} 
Let $Q$ be a quiver with vertex set $I$, let $v\in \Z^I$
be a dimension vector and let $\lambda\in \C^I$ be a parameter. 
Let $y\in \MC{M}_{(0,\lambda)}(Q,v)$ and let
$x\in \mu^{-1}(0,\lambda)\subset \om{Rep}(\ovl{Q},\delta)$ be a point
above $y$. Set
\[ H \coloneqq (G^\C_v)_x  \;\; \mbox{ and } \;\;  
W \coloneqq T_p(G^c_v\cdot x)^{\omega_\C}/T_x(G^c_v\cdot x)  \]
Let $\mu_W\colon W\ra \mathfrak{h}^*$ be the
unique complex symplectic moment map vanishing at $0$, where $\mathfrak{h}=\om{Lie}(H)$. Then a neighborhood of $y\in \MC{M}_{(0,\lambda)}(Q,v)$ is biholomorphic with a neighborhood of $0$ in (the analytification of) the GIT quotient $\mu_W^{-1}(0)//H$.
\end{lemma}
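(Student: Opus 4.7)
The plan is to verify that the hypotheses of \cite[Theorem~1.4(iv)]{Mayrand18} are met for our situation and then simply invoke that result. First I would use Theorem \ref{Quotient-Equivalence-Theorem} to work in the GIT picture, identifying a neighborhood of $y$ in $\MC{M}_{(0,\lambda)}(Q,v)$ with a neighborhood of the image of $x$ in $\mu_\C^{-1}(\lambda)//G_v^c$. The point $x\in \mu^{-1}(0,\lambda)$ has the property that the orbit $G_v^c\cdot x$ is closed in $\mu_\C^{-1}(\lambda)$: this is exactly what was established in Step 2 of the proof of Proposition \ref{Semi-Simp-Quiver-Eq-Prop}, using Lemma \ref{GIT-quotient-Lemma} and the final part of Theorem \ref{Quotient-Equivalence-Theorem}. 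Closedness of the orbit ensures $G_v^c\cdot x$ is an embedded complex submanifold of $\om{Rep}(\ovl{Q},v)$, so that $\om{Im}(\sigma_x) = T_x(G_v^c\cdot x)$ and the complex symplectic slice $W=\om{Ker}(\nu_x)/\om{Im}(\sigma_x)$ is genuinely a finite dimensional complex symplectic vector space.

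Next I would check that the stabilizer $H=(G_v^c)_x$ is reductive, so that the GIT quotient $\mu_W^{-1}(0)//H$ is well-defined. By Proposition \ref{Semi-Simp-Quiver-Eq-Prop}, one has $H\cong \prod_{j=1}^k \om{GL}(n_j,\C)$, which is manifestly reductive; alternatively $H$ is the complexification of the compact group $(G_v)_{x}\cong \prod_j U(n_j)$ via Sjamaar's result \cite[Proposition~1.6]{Sjamaar95}. The linear action of $H$ on $W$ is induced from its equivariance on the complex \eqref{Complex1}, and it preserves the symplectic form coming from $\omega_\C$ as described just before Definition \ref{Def-Symp-Slice}. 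Since the $H$-action is linear and symplectic, there is a unique algebraic moment map $\mu_W\colon W\ra \mathfrak{h}^*$ vanishing at $0$, which is homogeneous quadratic.

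At this point the result of \cite[Theorem~1.4(iv)]{Mayrand18} applies directly: it provides a $G_v^c$-equivariant holomorphic slice for $\mu_\C^{-1}(\lambda)$ through $x$ modeled on $\mu_W^{-1}(0)$, and descends to a biholomorphism between a neighborhood of $[x]$ in $\mu_\C^{-1}(\lambda)//G_v^c$ and a neighborhood of $0$ in the analytification of $\mu_W^{-1}(0)//H$. Combining this with the identification from Theorem \ref{Quotient-Equivalence-Theorem} yields the claimed biholomorphism. The only real content beyond bookkeeping is the verification that $G_v^c\cdot x$ is closed in $\mu_\C^{-1}(\lambda)$, which is what I expect to be the one non-trivial input, and this has already been handled in Step 2 of the proof of Proposition \ref{Semi-Simp-Quiver-Eq-Prop}; the remainder is a direct appeal to \cite{Mayrand18}.
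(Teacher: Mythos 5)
Your proposal is correct and follows exactly the route the paper takes: the paper states this lemma with no proof at all beyond the citation of \cite[Theorem~1.4(iv)]{Mayrand18}, and your verification of the hypotheses (closedness of the orbit $G_v^c\cdot x$ via Step 2 of Proposition \ref{Semi-Simp-Quiver-Eq-Prop}, reductivity of the stabilizer, and the identification of the complex analytic structure via Theorem \ref{Quotient-Equivalence-Theorem}) is precisely the bookkeeping the paper leaves implicit.
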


In view of this result our task is to determine the complex
symplectic slices at the points above the singular points in
$\MC{M}_{(0,\lambda)}(Q,\delta)$. It will be useful to introduce the
following notation. 

\begin{definition} Let $Q$ be a quiver with vertex set $I$. For a pair
of dimension vectors $v,w\in \Z^I_{\geq 0}$ define
\[ \om{Hom}(v,w)\coloneqq \bigoplus_{i\in I} \om{Hom}(V_i,W_i) \;\; \mbox{ and }
\;\; \om{Rep}(Q;v,w) \coloneqq \bigoplus_{h\in \ovl{Q}}\om{Hom}(V_{s(h)},W_{t(h)}), \]
where $V_i = \C^{v_i}$ and $W_i=\C^{w_i}$ for each $i\in I$.  
\end{definition}

Note that $\om{Rep}(Q;v,v)=\om{Rep}(Q,v)$ and that $\om{End}(v) \coloneqq \om{Hom}(v,v)=\fg_v^c$. The complex in \eqref{Complex1} also has a relative
analogue. Let $v,w\in \Z^I$ be a pair of dimension vectors and let
$x\in \om{Rep}(\ovl{Q},v)$ and $y\in \om{Rep}(\ovl{Q},w)$ satisfy
$\mu_\C(x)=\mu_\C(y)=\lambda$ for some $\lambda\in \C^I$. Define
$C_Q(x,y)$ to be the sequence given by
\[ \begin{tikzcd} 0 \arrow{r} & \om{Hom}(v,w) \arrow{r}{\sigma_{x,y}}
& \om{Rep}(Q;v,w) \arrow{r}{\nu_{x,y}} & \om{Hom}(v,w) \arrow{r} & 0
\end{tikzcd} \]
where 
\begin{align*}
\sigma_{x,y}((u_i)_{i\in I}) &= \; (u_{t(h)}x_h-y_hu_{s(h)})_{h\in \ovl{Q}} \\
\nu_{x,y}((v_h)_{h\in \ovl{Q}}) &= \; \left(\sum_{h\in t^{-1}(i)}\eps(h)(u_hx_{\ovl{h}}+y_hu_{\ovl{h}}) \right)_{i\in I}   .
\end{align*}
Note that $C_Q(x,x)$ is the complex of \eqref{Complex1}. 

\begin{lemma} \label{Homology-Lemma} 
Let $X$ and $Y$ denote the $\Pi^\lambda$-modules corresponding
to $x\in \om{Rep}(\ovl{Q},v)$ and $y\in \om{Rep}(\ovl{Q},w)$. Then
$C_Q(x,y)$ is a chain complex, i.e., $\nu_{x,y}\circ \sigma_{x,y}=0$ and
if we denote the cohomology groups from left to right by
$H^i_Q(x,y)$ for $0\leq i\leq 2$ we have
\begin{enumerate}[label=(\arabic*), ref=(\arabic*)]
\item $H^0_Q(x,y) \cong \om{Hom}_{\Pi^\lambda}(X,Y)$,
\item $H^2_Q(x,y) \cong \om{Hom}_{\Pi^\lambda}(Y,X)^*$,
\item $\om{dim}_\C H^1_Q(x,y) =\om{dim}_\C H^0_Q(x,y)+\om{dim}_\C H^2_Q(x,y)
-(v,w)$. 
\end{enumerate} 
\end{lemma}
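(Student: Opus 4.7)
The plan is to handle the three claims in sequence, reducing everything to moment-map computations and a single duality argument. The chain-complex condition $\nu_{x,y}\circ \sigma_{x,y}=0$ comes by direct expansion: when $\nu_{x,y}$ is applied to $\sigma_{x,y}(u)$, the cross terms involving $u_{s(h)} x_{\ovl{h}}$ cancel in pairs, leaving $u_i \mu_\C(x)_i - \mu_\C(y)_i u_i$, which vanishes because both moment maps equal the central element $\lambda_i$ at the $i$-th factor. The signs $\eps(h)$ in the definitions of $\sigma_{x,y}$ and $\nu_{x,y}$ are precisely what makes this cancellation work.

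For (1), the equation $\sigma_{x,y}(u)=0$ reads $u_{t(h)} x_h = y_h u_{s(h)}$ for every $h \in \ovl{Q}$, which is exactly the intertwining condition for $u$ to define a $\C\ovl{Q}$-module homomorphism $X \to Y$. Since $x, y \in \mu_\C^{-1}(\lambda)$, both $X$ and $Y$ descend to $\Pi^\lambda$-modules, so any $\C\ovl{Q}$-homomorphism between them is automatically a $\Pi^\lambda$-homomorphism, giving $H^0_Q(x,y) \cong \om{Hom}_{\Pi^\lambda}(X,Y)$.

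The key step is (2), which I would establish by exhibiting a duality between $C_Q(x,y)$ and $C_Q(y,x)$. Equip $\om{Hom}(v,w)$ and $\om{Hom}(w,v)$ with the non-degenerate trace pairing $\langle u, u'\rangle_1 = \sum_{i\in I}\om{tr}(u_i u'_i)$, and $\om{Rep}(Q;v,w)$ and $\om{Rep}(Q;w,v)$ with the twisted pairing $\langle a, a'\rangle_2 = \sum_{h \in \ovl{Q}} \eps(h)\om{tr}(a_h a'_{\ovl{h}})$. Using cyclic invariance of the trace together with the involutive sign relation $\eps(\ovl{h}) = -\eps(h)$, a direct (though fiddly) calculation shows that $\sigma_{x,y}$ and $\nu_{y,x}$ are mutually adjoint, and similarly $\nu_{x,y}$ and $\sigma_{y,x}$. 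By non-degeneracy, $\om{Im}(\nu_{x,y})^\perp = \ker(\sigma_{y,x})$, whence
\[ H^2_Q(x,y)^* \cong \ker(\sigma_{y,x}) = H^0_Q(y,x) \cong \om{Hom}_{\Pi^\lambda}(Y,X), \]
and dualizing yields (2). The sign and reversal bookkeeping in this adjointness computation is the one step that truly requires care; it is essentially the reason the twisted pairing is introduced.

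Finally, (3) is the Euler characteristic identity for the finite-dimensional three-term complex $C_Q(x,y)$:
\[ \dim H^0_Q(x,y) - \dim H^1_Q(x,y) + \dim H^2_Q(x,y) = 2\sum_{i\in I} v_i w_i - \sum_{h\in \ovl{Q}} v_{s(h)} w_{t(h)} = (v,w), \]
using Definition \ref{Def-SB-form}, and rearranging gives the claimed formula.
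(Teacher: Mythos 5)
Your proposal is correct and follows essentially the same route as the paper: the direct moment-map cancellation for $\nu_{x,y}\circ\sigma_{x,y}=0$, the identification of $\ker(\sigma_{x,y})$ with $\om{Hom}_{\Pi^\lambda}(X,Y)$, the duality between $C_Q(x,y)$ and $C_Q(y,x)$ via the trace and $\eps$-twisted pairings (which is exactly the paper's commutative square identifying $\sigma_{y,x}$ with $(\nu_{x,y})^*$), and the Euler characteristic count for (3). The only caveat is that both you and the paper defer the actual adjointness computation, which is the one place the signs must be checked.
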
 
\begin{proof} To simplify the notation we will write $V_i = \C^{v_i}$ and
$W_i = \C^{w_i}$ for $i\in I$. Let $u = (u_i\colon V_i\ra W_i)_{i\in I} \in \om{Hom}(v,w)$. Then using the definitions of $\sigma_{x,y}$ and $\nu_{x,y}$
we see that $\nu_{x,y}\circ \sigma_{x,y}(u)$ equals
\begin{align*} 
 =& \left( \sum_{h\in t^{-1}(i)} 
\eps(h)(u_{t(h)}x_hx_{\ovl{h}}-y_hu_{s(h)}x_{\ovl{h}}+y_hu_{t(\ovl{h})}x_{\ovl{h}}
-y_hy_{\ovl{h}}u_{s(\ovl{h})} ) \right)_{i\in I}  \\
=& \left( \sum_{h\in t^{-1}(h)} u_i(\eps(h)x_hx_{\ovl{h}}) -(\eps(h)y_hy_{\ovl{h}})u_i \right)_{i\in I} = (u_i\lambda_i - \lambda_i u_i)_{i\in I} =0 . 
\end{align*}
Here we have used that $s(\ovl{h})=t(h)$, $t(\ovl{h})=s(h)$ and that
$\mu_\C(x)=\mu_\C(y) = \lambda$. This shows that $C_Q(x,y)$ is a chain
complex.   

Recall that $\Pi^\lambda$ was defined to be a quotient
of the quiver algebra $\C \ovl{Q}$. Therefore, we may also
regard $X$ and $Y$ as $\C \ovl{Q}$-modules and clearly 
$\om{Hom}_{\C \ovl{Q}}(X,Y) = \om{Hom}_{\Pi^\lambda}(X,Y)$.
From the definition of a homomorphism of representations it is clear
that $\om{Hom}_{\C \ovl{Q}}(X,Y)=\om{Ker}(\sigma_{x,y}) = H^0_Q(x,y)$ proving
part $(1)$. 

For the second part we use an idea from the proof of  \cite[Lemma~3.1]{Crawley00} (this lemma and its proof implies our result for $\lambda=0$).
Let $\phi\colon \om{Hom}(w,v)\ra \om{Hom}(v,w)^*$ be the isomorphism given by
$\phi(u)(v) = \sum_{i\in I} \om{tr}(u_iv_i)$ and let $\psi\colon \om{Rep}(Q;w,v)\ra \om{Rep}(Q;v,w)^*$ be the isomorphism given by
$\psi(f)(g) = \sum_{h\in \ovl{Q}}\eps(h)\om{tr}(f_hg_{\ovl{h}})$. 
Then a rather tedious calculation shows that the following diagram commutes 
\[ \begin{tikzcd} \om{Hom}(w,v) \arrow{r}{\sigma_{y,x}} \arrow{d}{\phi}
& \om{Rep}(Q;w,v) \arrow{d}{\psi} \\
\om{Hom}(v,w)^* \arrow{r}{(\nu_{x,y})^*} & \om{Rep}(Q;v,w)^*.
\end{tikzcd} \]  
Since both the vertical maps are isomorphisms we conclude that
\[ \om{Coker}(\nu_{x,y})^* \cong \om{Ker}((\nu_{x,y})^*)\cong \om{Ker}(\sigma_{y,x}) =\om{Hom}_{\Pi^\lambda}(Y,X) , \]
where the final equality follows from the first part. Hence, $H^2_Q(x,y)\cong \om{Hom}_{\Pi^\lambda}(Y,X)^*$. 

For the final part observe that
\[ (v,w) = 2\sum_{i\in I}v_iw_i -\sum_{h\in \ovl{Q}} v_{s(h)}w_{t(h)}
= 2\om{dim}_\C \om{Hom}(v,w) - \om{dim}_\C \om{Rep}(Q;v,w)  \]
is the Euler characteristic of the complex $C_Q(v,w)$. Since the
Euler characteristic is preserved upon passage to cohomology, we obtain
$(v,w) = \om{dim}_\C H^0_Q(v,w)-\om{dim}_\C H^1_Q(v,w) + \om{dim}_\C H^2_Q(v,w)$ and this is equivalent to the formula stated in part $(3)$.   
\end{proof}

\begin{remark} It is in fact also true that $H^1_Q(x,y) \cong \om{Ext}^1_{\Pi^\lambda}(X,Y)$. We give a sketch of the proof. By 
\cite[Corollary~1.4.2]{Brion12} it holds true that
$\om{Coker}(\sigma_{x,y}) = \om{Ext}^1_{\C \ovl{Q}}(X,Y)$.
Moreover, there is an explicit way to relate this group to the set of
isomorphism classes of extensions $0\ra Y\ra Z \ra X \ra 0$.
Given an element $[z]\in \om{Ext}^1_{\C \ovl{Q}}(X,Y)$ represented
by $z = (z_u\colon V_{s(h)}\ra W_{t(h)})$ one may construct the extension $Z$
by setting $e_iZ=U_i = V_i\oplus W_i$ for each $i\in I$ and letting
$z_h\colon U_{s(h)}\ra U_{t(h)}$ for $h\in \ovl{Q}$ be given by the matrix
\[ z_h =\left(  \begin{array}{cc} x_h & 0 \\ z_h & y_h \end{array} \right) .\]
The exact sequence $0\ra Y\ra Z\ra X\ra 0$ is given componentwise by the canonical exact sequence $0\ra W_i\ra V_i\oplus W_i\ra V_i\ra 0$.
This is then an extension of $\Pi^\lambda$-modules if and only if
$\mu_\C(Z)=\lambda$. It is then a matter of calculation to check that this
is the case if and only if $z\in \om{Ker}(\nu_{x,y})$. 
\end{remark}  

Let $Q$ be an extended Dynkin quiver with vertex set identified with
$\{0,1,\cdots,n\}$ and minimal imaginary root $\delta\in \Z^{n+1}$.
Let $\lambda=(\lambda_1,\tau) \in \C\oplus \C^n=\C^{n+1}$ satisfy
$\delta\cdot \lambda=0$. Denote the root systems by $\tilde{\Phi}\subset \Z^{n+1}$ and $\Phi\subset \Z^n$ as usual. By Proposition \ref{Semi-Simp-Quiver-Eq-Prop} and Theorem \ref{SemiSimpClass} the singular points in
$\MC{M}_{(0,\lambda)}(Q,\delta)$ are in bijection with the components
in the root space decomposition
\[ \Phi\cap \tau^\perp = \Phi_1\cup \Phi_2 \cup \cdots \cup \Phi_r  .\]
Write $\MC{M}^{\om{sing}}_{(0,\lambda)}(Q,\delta) = \{y_1,y_2,\cdots,y_r\}$ where $y_i$ corresponds to $\Phi_i$ for each $1\leq i \leq r$. 

\begin{proposition} \label{Symplectic Slice} 
In the above situation fix $i$, $1\leq i\leq r$, and
let $x\in \mu^{-1}(0,\lambda)\subset \om{Rep}(Q,\delta)$ be a point above $y_i$. Let $Q'$ be the extended Dynkin quiver associated with the root system
$\Phi_i$ and let $\delta'$ denote its minimal imaginary root. Then there
is an isomorphism $(G_\delta^c)_x \cong G_{\delta'}^c$ and there
is a complex symplectic isomorphism
\[ T_x(G^c_\delta\cdot x)^{\omega_\C}/T_x(G^c_\delta\cdot x) \cong \om{Rep}(\ovl{Q'},\delta')   \]
equivariant along the above isomorphism of groups. 
\end{proposition}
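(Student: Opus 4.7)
The plan is to compute the complex symplectic slice $W = T_x(G^c_\delta \cdot x)^{\omega_\C}/T_x(G^c_\delta \cdot x)$ via the relative complex of Lemma \ref{Homology-Lemma} and identify the resulting cohomology with the representation space of the extended Dynkin quiver $Q'$. I start from the explicit decomposition of $X$ produced in the proof of Theorem \ref{SemiSimpClass}: writing $X = \bigoplus_{j=0}^k U_j \otimes_\C X_j$, where $X_0$ is the simple $\Pi^\lambda$-module of dimension $\gamma_0 = (1,d-\beta)$, each $X_t$ for $1 \leq t \leq k$ is the simple of dimension $\gamma_t = (0,\alpha_t)$ (with $\alpha_1,\ldots,\alpha_k$ the positive base and $\beta$ the highest root of $\Phi_i$), and $U_j = \C^{n_j}$ with $n_0 = 1$ and $\delta = \sum_j n_j \gamma_j$. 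Proposition \ref{Semi-Simp-Quiver-Eq-Prop} identifies the stabilizer as $(G^c_\delta)_x \cong \prod_{j=0}^k \om{GL}(U_j)$; since the extended Dynkin diagram $\wt{K'}$ of $\Phi_i$ has vertex set $\{0,1,\ldots,k\}$ with minimal imaginary root $\delta' = (1,n_1,\ldots,n_k)$, this is precisely $G^c_{\delta'}$, yielding the first isomorphism in the statement.

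For the slice itself, Lemma \ref{Homology-Lemma} together with the subsequent remark identify $W \cong \om{Ext}^1_{\Pi^\lambda}(X,X)$, and the isotypic decomposition of $X$ yields an $H$-equivariant block decomposition
\[ W \cong \bigoplus_{j,j'} \om{Hom}(U_j,U_{j'}) \otimes_\C \om{Ext}^1_{\Pi^\lambda}(X_j, X_{j'}), \]
with $H$ acting only on the $\om{Hom}$ factors. Schur's lemma combined with Lemma \ref{Homology-Lemma}(2) gives $\om{dim} H^0_Q(X_j,X_{j'}) = \om{dim} H^2_Q(X_j,X_{j'}) = \delta_{jj'}$, and since each $\gamma_j$ is a real root with $(\gamma_j,\gamma_j) = 2$, Lemma \ref{Homology-Lemma}(3) yields $\om{dim} \om{Ext}^1(X_j,X_{j'}) = 2\delta_{jj'} - (\gamma_j,\gamma_{j'})$. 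Applying the identity $(\cdot,\cdot)_{\wt{C}} = (\psi\cdot,\psi\cdot)_C$ from Lemma \ref{Root-Bijection-Lemma} with $\psi(\gamma_0) = -\beta$ and $\psi(\gamma_t) = \alpha_t$, the matrix of pairings $(\gamma_j,\gamma_{j'})$ is exactly the extended Cartan matrix of $\Phi_i$ (since $\beta$ is the highest root and the $\alpha_t$'s its positive base). Hence $-(\gamma_j,\gamma_{j'})$ equals the edge multiplicity $a'_{jj'}$ between $j$ and $j'$ in $\wt{K'}$ for $j \neq j'$ and vanishes on the diagonal. Comparing with $\om{Rep}(\ovl{Q'},\delta') \cong \bigoplus_{j \neq j'} \om{Hom}(U_j,U_{j'})^{\oplus a'_{jj'}}$, any choice of linear identifications $\om{Ext}^1(X_j,X_{j'}) \cong \C^{a'_{jj'}}$ produces an $H$-equivariant linear isomorphism $W \cong \om{Rep}(\ovl{Q'},\delta')$.

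The hard part is arranging this isomorphism to be complex symplectic. Both forms are $H$-invariant, and by the isotypic structure each pairs only the $(j,j')$-block against the $(j',j)$-block, factoring as the trace pairing $\om{Hom}(U_j,U_{j'}) \otimes \om{Hom}(U_{j'},U_j) \to \C$ tensored with a non-degenerate bilinear pairing on the respective multiplicity spaces. On the side of $\om{Rep}(\ovl{Q'},\delta')$ the auxiliary pairing is the $\eps$-signed trace formula \eqref{Symp-Complex-Formula}, while on the Ext side it is the Yoneda composition $\om{Ext}^1(X_j,X_{j'}) \otimes \om{Ext}^1(X_{j'},X_j) \to \om{Ext}^2(X_j,X_j) \cong \C$ using the isomorphism of Lemma \ref{Homology-Lemma}(2). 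The plan is to fix the identifications $\om{Ext}^1(X_j,X_{j'}) \cong \C^{a'_{jj'}}$ freely for $j < j'$ and then use non-degeneracy on both sides to determine those for $j > j'$ uniquely. The main obstacle is to verify consistency of the choices across each pair of blocks, which amounts to matching the antisymmetry relations satisfied by both pairings; this should ultimately follow from the fact that both forms are built from the trace pairing on the underlying representation spaces, with signs tracked by $\eps$ in one case and by the sign convention in Yoneda composition in the other.
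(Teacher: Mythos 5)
Your proposal is correct and follows essentially the same route as the paper: decompose $X$ into isotypic components, identify the stabilizer via Proposition \ref{Semi-Simp-Quiver-Eq-Prop}, decompose the slice $H^1_Q(x,x)$ into blocks $\om{Hom}(U_j,U_{j'})\otimes H^1_Q(z_j,z_{j'})$, compute block dimensions from Lemma \ref{Homology-Lemma}(3) and Schur's lemma to recover the extended Cartan matrix of $\Phi_i$, and then match the symplectic form block-by-block, with the residual sign choices defining the orientation of $Q'$. Your final step is left at the same level of detail as the paper's (the paper likewise asserts that ``appropriately chosen'' identifications put the form in standard shape), and your handling of possible edge multiplicities $a'_{jj'}>1$ is if anything slightly more careful.
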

\begin{proof} First note that the complex symplectic slice at $x$
is precisely the cohomology group $H^1_Q(x,x)$. We will determine this
complex symplectic space as an $H\coloneqq (G^c_\delta)_x$ representation.
Let $X=\oplus_{t=0}^k n_tZ_t$ denote the semi-simple $\Pi^\lambda$-module corresponding to $x$ decomposed into simple summands.
Then, according to Proposition \ref{Semi-Simp-Quiver-Eq-Prop}, we
have $H = \prod_{t=0}^k \om{GL}(n_t,\C)$. Recall from the proof
of Theorem \ref{SemiSimpClass} that if we write $\gamma_t = \om{dim}Z_t\in \Z^{n+1}$ for $0\leq t\leq k$, then after possibly rearranging
the indices we have $\gamma_0 = (1,d-\beta) \in \Z^{n+1}$ and
$\gamma_t = (0,\alpha_t)\in \Z^{n+1}$, $1\leq t\leq k$, where
$\alpha_1,\alpha_2,\cdots,\alpha_t\in \Phi_i\subset \Phi\cap \tau^{\perp}$
is a base and $\beta = \sum_{t=1}^n n_t\alpha_t$ is the maximal root.

Let $z_j\in \mu_\C^{-1}(\lambda)\subset \om{Rep}(Q,\gamma_j)$ be the
point corresponding to $Z_j$. Then the complex $C_Q(x,x)$ decomposes
according to the decomposition $X=\sum_{t=0}^kn_tZ_t$, namely,
\[ C_Q(x,x) \cong \bigoplus_{t,s} \om{Hom}(\C^{n_s},\C^{n_t})\otimes C_Q(z_s,z_t)  .\]
The stabilizer $H=\prod_{t=0}^k \om{GL}(n_t,\C)$ only acts
on the first factors, i.e.,  
\[  (u_j)_{j}\cdot (f_{t,s}\otimes B_{t,s})_{t,s}
= (u_tf_{t,s}u_s^{-1}\otimes B_{t,s})_{t,s}  \]
for $(u_j)_j\in H$ and $f_{t,s}\otimes B_{t,s}\in \om{Hom}(\C^{n_s},\C^{n_t})\otimes C_Q(z_t,z_s)$.
Passing to cohomology we obtain
\begin{equation} \label{Space-Formula} 
H^1_Q(x,x) \cong \bigoplus_{s,t} \om{Hom}(\C^{n_s},\C^{n_t})\otimes H^1_Q(z_s,z_t)
\end{equation} 
and the action of $H$ is the same as described above. 
By Lemma \ref{Homology-Lemma} part $(3)$ and the fact that each $Z_t$
is a simple module we find
\begin{align} \label{Dimension-Formula}
 \om{dim}_\C H^1_Q(z_s,z_t) =& \; \om{dim}_C \om{Hom}_{\Pi^\lambda}(Z_s,Z_t)
+ \om{dim}_\C \om{Hom}_{\Pi^\lambda}(Z_t,Z_s)^*-(\gamma_s,\gamma_t) 
\nonumber \\  =& \; 2\delta_{st}-(\gamma_s,\gamma_t).
\end{align}  
Let $\wt{K}$ be the extended Dynkin graph associated with the root system 
$\Phi_i$. Specifically, the vertex set is $I = \{0,1,\cdots,k\}$ corresponding
to the roots $\alpha_0=-\beta,\alpha_1,\cdots,\alpha_t$ 
and a single edge connecting $s$ to $t$
if and only if $(\alpha_s,\alpha_t)=-1$. As $(\gamma_s,\gamma_t)=(\alpha_s,\alpha_t)$ for all $s,t$, we conclude by the dimension formula \eqref{Dimension-Formula} that
$H^1_Q(z_s,z_t)\cong \C$ precisely when $s\neq t$ and $s$ and $t$
are adjacent in $\wt{K}$ and $H^1_Q(z_s,z_t)=0$ otherwise.
The expression in \eqref{Space-Formula} then takes the form
\[ H^1_Q(x,x) \cong \bigoplus_{s\to t \; \mbox{ in } \wt{K}} \om{Hom}(\C^{n_s},\C^{n_t}) , \]
where each edge is repeated twice once with each orientation.
If the identifications $H^1_Q(z_s,z_t)\cong \C$ for $s$ and $t$ adjacent
in $\wt{K}$ are chosen appropriately,
the induced symplectic form is given by
\[ \omega ((f_{s,t})_{s,t},(g_{s,t})_{s,t})) = \sum_{s<t} \eps(s,t)(\om{tr}(f_{s,t}g_{t,s}) -\om{tr}(f_{t,s}g_{s,t}))  .\]
for some $\eps(s,t)=\pm 1$. If $s<t$ and there is an edge connecting $s$
to $t$, we specify the orientation of the edge by $s\to t$ if $\eps(s,t)=1$
and $t\to s$ if $\eps(t,s)=-1$. This gives rise to an extended Dynkin
quiver $Q'$ with minimal imaginary root $\delta'=(n_0=1,n_1,\cdots,n_k)$.
It is now clear from the above work that
$H^1_Q(x,x)\cong \om{Rep}(\ovl{Q'},\delta')$ as complex symplectic
$H\cong \prod_{t=0}^k \om{GL}(n_t,\C)=G_{\delta'}^c$ representations. 
\end{proof} 

To complete the proof of Theorem \ref{Singularity-Theorem} we will
need the following result.

\begin{lemma} \cite[Corollary~3.2]{Kronheimer89} Let $Q$ be an
extended Dynkin quiver with minimal imaginary root $\delta$.
Let $\Gamma\subset \om{SU}(2)$ be the finite subgroup associated
with the underlying unoriented graph of $Q$ under the MacKay correspondence.
Then there is a homeomorphism
\[ \MC{M}_0(Q,\delta) \cong \C^2/\Gamma  \]
that restricts to an isometry away from the singular point. In particular,
$\MC{M}_0^{\om{reg}}(Q,\delta) =\MC{M}_0(Q,\delta)-\{0\}$. 
\end{lemma}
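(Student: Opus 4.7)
The plan is to deduce the lemma from Kronheimer's original construction (which predates Nakajima's quiver-variety formulation) by making explicit the identification between the two descriptions of the space, and then pinning down the regular locus using our singularity classification.

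First I would make explicit the identification between $\om{Rep}(\ovl{Q},\delta)$ and Kronheimer's space, following Nakajima 94 (pp.~372--373). Under the McKay correspondence the vertex set $I$ is in bijection with the set of isomorphism classes of irreducible complex $\Gamma$-representations $\{V_i\}_{i \in I}$, with $\delta_i = \om{dim}_\C V_i$, and the regular representation decomposes as $R = \bigoplus_{i \in I} V_i^{\oplus \delta_i}$. Using $\Gamma \subset \om{SU}(2)$ the standard representation $\C^2$ is a $\Gamma$-module, and Schur's lemma identifies $\om{Hom}_\Gamma(R, \C^2 \otimes R)$ with $\om{Rep}(\ovl{Q},\delta)$ as a $G_\delta$-equivariant quaternionic representation; under this identification the hyper-K\"{a}hler moment map \eqref{MomentMap-Eq} becomes the system of ADHM-type equations appearing in Kronheimer's construction.

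Next I would construct the comparison map. Each $v \in \C^2$ gives a $\Gamma$-equivariant map $R \to \C^2 \otimes R$ by $r \mapsto v \otimes r$, and this defines a map $\Psi\colon \C^2 \to \om{Rep}(\ovl{Q},\delta)$ whose image lies in $\mu^{-1}(0)$ by a direct computation (commutativity of $\C^2$ yields $\mu_\C = 0$, while a scalar calculation gives $\mu_\R = 0$). Since $\Psi$ is $\Gamma$-equivariant with $G_\delta$-invariant target, it descends to a continuous map $\bar\Psi\colon \C^2/\Gamma \to \MC{M}_0(Q,\delta)$. Kronheimer's Corollary~3.2 asserts that $\bar\Psi$ is a homeomorphism and restricts to an isometry on $(\C^2 - \{0\})/\Gamma$, where both sides carry a canonical flat hyper-K\"{a}hler metric (inherited from $\C^2$ on the right, and from Theorem \ref{HK-Free-Theorem} applied to the free action of $G_\delta'$ on the open set of free orbits in $\mu^{-1}(0)$ on the left). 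The main obstacle lies precisely in this step: establishing that $\bar\Psi$ is a homeomorphism and an isometry is conceptually straightforward but requires the careful representation-theoretic bookkeeping carried out in \cite{Kronheimer89}.

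Finally, for the concluding assertion I would apply Theorem \ref{SemiSimpClass} with $\lambda = 0$: writing $\lambda = (\lambda_1, \tau)$ gives $\tau = 0$, so $\tau^\perp \cap \Phi = \Phi$ is a single irreducible root system of type $ADE$. Theorem \ref{SemiSimpClass} then produces a unique isomorphism class of semi-simple, non-simple $\Pi^0$-module of dimension $\delta$, which by Proposition \ref{Semi-Simp-Quiver-Eq-Prop} corresponds to exactly one singular point in $\MC{M}_0(Q, \delta)$. The homeomorphism $\bar\Psi$ must send this singular point to the image of the unique fixed point $0 \in \C^2/\Gamma$, both being characterized as the unique points with nontrivial stabilizer, so $\MC{M}_0^{\om{reg}}(Q, \delta) = \MC{M}_0(Q, \delta) - \{0\}$.
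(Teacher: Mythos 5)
The paper does not actually prove this lemma: it is imported verbatim as \cite[Corollary~3.2]{Kronheimer89}, so what you are writing is necessarily a gloss on Kronheimer's argument. Your overall strategy --- identify $\om{Rep}(\ovl{Q},\delta)$ with $\om{Hom}_\Gamma(R,\C^2\otimes R)$ via the McKay correspondence, build an explicit map from $\C^2/\Gamma$ to $\MC{M}_0(Q,\delta)$, and defer the homeomorphism/isometry statement to Kronheimer --- has the right shape, and your derivation of the final clause from Theorem \ref{SemiSimpClass} (with $\tau=0$ one gets $\tau^\perp\cap\Phi=\Phi$, a single irreducible system, hence exactly one singular point by Proposition \ref{Semi-Simp-Quiver-Eq-Prop}) is a clean internal argument that the paper leaves implicit and which is not circular, since Theorem \ref{SemiSimpClass} is established before this lemma is used.

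There is, however, a concrete error in your comparison map. For $v\in\C^2$ the map $R\to\C^2\otimes R$, $r\mapsto v\otimes r$, is \emph{not} $\Gamma$-equivariant: equivariance would require $v\otimes\gamma r=\gamma v\otimes\gamma r$ for all $\gamma\in\Gamma$, i.e.\ $\gamma v=v$, and since a nontrivial $\Gamma\subset\om{SU}(2)$ acts freely on $\C^2-\{0\}$ this forces $v=0$. So your $\Psi(v)$ does not lie in $\om{Hom}_\Gamma(R,\C^2\otimes R)\cong\om{Rep}(\ovl{Q},\delta)$ for $v\neq0$, and the subsequent moment-map computation is being applied to a point outside the space. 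The correct map sends the orbit $\Gamma v$ to the pair of commuting normal endomorphisms of $R\cong\C[\Gamma]$ that are diagonal in the group basis with eigenvalues the coordinates of the points $\gamma v$, $\gamma\in\Gamma$ (equivalently, to the $\Pi^0$-module structure on the coordinate ring of the orbit $\Gamma v$, which is simple of dimension $\delta$ for $v\neq0$ and isomorphic to $\bigoplus_i \delta_i S_i$ for $v=0$). With that replacement the invariance, the vanishing of $\mu_\R$ and $\mu_\C$, and the descent to a map $\C^2/\Gamma\to\MC{M}_0(Q,\delta)$ all go through as you describe, and the remaining analytic content is genuinely \cite[Corollary~3.2]{Kronheimer89}.
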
 

\begin{proof}[Proof of Theorem \ref{Singularity-Theorem}]
Let $Q$ be an extended Dynkin quiver with vertex set
$\{0,1,\cdots,n\}$ and minimal imaginary root $\delta =(1,d)\in \Z^{n+1}$,
where $d$ is the maximal positive root in the associated
root system $\Phi\subset \Z^n$ of type $ADE$. Let
$\lambda\in \C^{n+1}$ be a parameter satisfying $\lambda\cdot \delta=0$
and write $\lambda=(\lambda_1,\tau)\in \C\oplus \C^n$. Then by Theorem \ref{SemiSimpClass} there is a bijection between 
$\MC{M}_{(0,\lambda)}^{\om{sing}}(Q,\delta)$ and the components in the root space decomposition
\[ \Phi\cap \tau^\perp = \Phi_1\cup \Phi_2 \cup \cdots \cup \Phi_q . \]

Write $\MC{M}_{(0,\lambda)}^{\om{sing}}(Q,\delta) = \{x_1,\cdots,x_q\}$,
where $x_i$ corresponds to $\Phi_i$ for $1\leq i\leq q$. 
For each $1\leq i\leq q$, let $Q^{(i)}$ denote the extended Dynkin quiver associated with the root system $\Phi_i$ and let $\delta^{(i)}$ be
the associated minimal positive imaginary root.  
Then, according to Proposition \ref{Symplectic Slice} and Lemma \ref{Local-Structure-Lemma}, there is for each $1\leq i\leq q$ an open neighborhood
$U_i$ of $x_i\in \MC{M}_{(0,\lambda)}(Q,\delta)$, an open neighborhood
$V_i$ of $0\in \MC{M}_{0}(Q^{(i)},\delta^{(i)})$ and a biholomorphism $\rho_i\colon U_i\ra V_i$.
Importantly, since the category of complex manifolds is a full subcategory of the category of complex analytic spaces, this biholomorphism restricts to a biholomorphism $\rho_i\colon U_i^{\om{reg}} \cong V_i^{\om{reg}}$ of
complex manifolds. 

Let $\Gamma_i\subset \om{SU}(2)$ be the finite subgroup associated with
$Q^{(i)}$ under the McKay correspondence. 
By the above lemma there is for each $i$, $1\leq i\leq q$, a homeomorphism $\MC{M}_0(Q^{(i)},\delta^{(i)})\cong \C^2/\Gamma_i$ that restricts to an isometry away from the singular point. This map restricts to a homeomorphism
$\kappa_i\colon V_i\cong W_i\subset \C^2/\Gamma_i$ for some open neighborhood $W_i$ around $0$. By shrinking the $U_i$ and $V_i$ if necessary, we may assume
that $W_i = B_r(0)/\Gamma_i$ for some $r>0$ for each $1\leq i \leq q$.
The compositions $\phi_i \coloneqq \kappa_i\circ \rho_i\colon U_i\ra B_r(0)/\Gamma_i$ are then the required homeomorphisms. Indeed, for each $i$ both $\rho_i$
and $\kappa_i$ restrict to diffeomorphisms away from the singular point,
so we deduce that the restriction
\[ \phi_i=\kappa_i\circ \phi_i\colon  
\MC{M}_{(0,\lambda)}^{\om{reg}}(Q,\delta)\cap U_i
= U_i-\{x_i\} \cong (B_r(0)-\{0\})/\Gamma_i \]
is a diffeomorphism. This completes the proof.   
\end{proof}

\section{Configurations of Singularities and the Proof of Theorem \ref{Bordism-Theorem}}
Let $Q$ be an extended Dynkin quiver with vertex set $I=\{0,1,\cdots,n\}$
and minimal imaginary root $\delta\in \Z^{n+1}$. 
In this section we take up the question of what kind of configurations
of singularities that can occur in $\MC{M}_{(0,\lambda)}(Q,\delta)$ 
by varying the parameter $\lambda$. Assume that $\lambda\cdot \delta=0$ and
write $\lambda=(\lambda_1,\tau)\in \C\oplus \C^n$. Then according
to Theorem \ref{SemiSimpClass} and the local structure result in the
previous section, the configuration of singularities is uniquely determined
by the root space decomposition
\[ \Phi\cap \tau^\perp = \Phi_1\cup \cdots \cup \Phi_r,   \]
where $\Phi\subset \Z^n$ is the root system of type $ADE$ associated
with $Q$. The problem therefore reduces to determining the number and types
of root systems that can occur in the above root space decomposition.

Give $\C$ the total ordering determined by $z\leq w$ if and only
if either $\om{Re}(z)\leq \om{Re}(w)$ or $\om{Re}z=\om{Re}w$
and $\om{Im}z\leq \om{Im}w$. Note that this ordering is additive,
that is, $z\leq w\implies z+c\leq w+c$ for each $c\in \C$. 
We say that an element $\tau\in \C^n$ is dominant if $\tau_i\geq 0$ for each $i$. The value of this notion comes from the simple observation that if 
$\tau\in \C^n$ is dominant and $\theta\in \Z^n$ then $\tau\cdot \theta=0$ if and only if $\om{supp}(\theta)\cap \om{supp}(\tau) = \emptyset$. 

\begin{lemma} \label{Dynkin-Decomposition-Lemma} 
Let $K$ denote the Dynkin diagram associated with the
root system $\Phi\subset \Z^n$. Suppose $\tau\in \C^n$ is dominant and let
$J$ be the complement of $\om{supp}(\tau)$ in $\{1,2,\cdots,n\}$. Let
$K_J\subset K$ be the full subgraph of $K$ with vertex set
$J\subset \{1,2,\cdots,n\}$. Let
\[ K_J = K_1\sqcup K_2 \sqcup \cdots \sqcup K_r  \]
be the decomposition of $K_J$ into connected components. Then 
\[ \Phi\cap \tau^\perp = \Phi_1\cup \Phi_2 \cup \cdots \cup \Phi_r, \] 
where $\Phi_i$ is the $ADE$ root system associated with $K_i$ for
each $1\leq i\leq r$. 
\end{lemma}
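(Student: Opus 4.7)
The plan is to exploit dominance of $\tau$ to convert the orthogonality condition into a support condition, then invoke the fact that every root in the $ADE$ root system $\Phi$ has connected support in $K$, and finally identify the roots supported on a connected component $K_i$ with the root system $\Phi_i$.

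First I would use the observation noted immediately before the lemma: since $\tau$ is dominant, for any $\alpha\in\Z^n$ the relation $\tau\cdot\alpha=0$ is equivalent to $\om{supp}(\alpha)\cap\om{supp}(\tau)=\emptyset$. Therefore
\[
\Phi\cap\tau^\perp \;=\; \{\alpha\in\Phi : \om{supp}(\alpha)\subseteq J\}.
\]

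Next I would prove that every root $\alpha\in\Phi$ has connected support in $K$. Suppose for contradiction that $\om{supp}(\alpha)=S_1\sqcup S_2$ with no edge of $K$ between $S_1$ and $S_2$. Writing $\alpha=\alpha^{(1)}+\alpha^{(2)}$ with $\om{supp}(\alpha^{(j)})\subseteq S_j$, the absence of adjacencies gives $(\alpha^{(1)},\alpha^{(2)})_C=0$ directly from the formula $(v,w)_C=2v\cdot w-v\cdot Aw$. The same formula shows that $(v,v)_C=2\bigl(v\cdot v-\sum_{i<j}a_{ij}v_iv_j\bigr)$ is always an even integer, and it is positive for $v\neq 0$ since the $ADE$ Cartan form is positive definite; hence $(\alpha^{(j)},\alpha^{(j)})_C\geq 2$ for each nonzero $\alpha^{(j)}$. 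This forces $(\alpha,\alpha)_C\geq 4$, contradicting $(\alpha,\alpha)_C=2$ which holds for every root of the finite $ADE$ system $\Phi$ by \eqref{Real-Root-Criterion}. Consequently, each $\alpha\in\Phi\cap\tau^\perp$ has connected support contained in $J$, and thus lies in a unique connected component $K_i$ of $K_J$, giving
\[
\Phi\cap\tau^\perp \;=\; \bigsqcup_{i=1}^r \{\alpha\in\Phi : \om{supp}(\alpha)\subseteq K_i\}.
\]

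Finally I would identify $\{\alpha\in\Phi : \om{supp}(\alpha)\subseteq K_i\}$ with $\Phi_i$. Because $K_i$ is a \emph{full} subgraph of $K$, the restriction of $(\cdot,\cdot)_C$ to $\Z^{K_i}\subset\Z^n$ is exactly the Cartan pairing of $K_i$. A connected full subgraph of an $ADE$ Dynkin diagram is itself an $ADE$ diagram (by direct inspection of the $A_n$, $D_n$, $E_6$, $E_7$, $E_8$ graphs), so $\Phi_i$ is a finite $ADE$ root system and in particular all of its roots are real. By \eqref{Real-Root-Criterion} applied to $K_i$, $\Phi_i$ equals the set of $\alpha\in\Z^{K_i}$ with $(\alpha,\alpha)_C=2$, which is precisely $\{\alpha\in\Phi : \om{supp}(\alpha)\subseteq K_i\}$ in view of the same characterization applied to $K$. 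Assembling the three steps yields the stated decomposition. The main obstacle is the second step, the connected-support property of roots; once the positive-definite/evenness observation is in hand it is a short argument, and the remaining steps are essentially unwindings of definitions.
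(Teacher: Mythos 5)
Your proof is correct, but it takes a genuinely different route from the paper's. The paper argues via bases: it observes that $S=\{\eps_j : j\in J\}$ is a base for $\Phi\cap\tau^\perp$ (every positive root in $\tau^\perp$ has support in $J$ and is hence a nonnegative integral combination of these coordinate vectors), decomposes $S$ into its minimal pairwise orthogonal pieces $S_i=\{\eps_j:j\in J_i\}$, and identifies $\Phi_i$ as the subsystem generated by $S_i$, invoking the standard correspondence between indecomposable bases and irreducible components. You instead bypass bases entirely: you prove that every root of a simply-laced finite system has connected support (via the evenness and positive-definiteness of $(\cdot,\cdot)_C$), so each root of $\Phi\cap\tau^\perp$ lives on a single component $K_i$, and then you identify the roots supported on $K_i$ with the abstract root system of $K_i$ using the norm-two characterization \eqref{Real-Root-Criterion} and the fact that the Cartan form of a full subgraph is the restriction of the ambient one. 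Your version has the advantage of making the identification of $\Phi_i$ with the root system of $K_i$ completely explicit, where the paper leans on the base-to-component dictionary; the paper's version avoids having to prove the connected-support lemma. One minor point shared with the paper: the equivalence ``$\tau\cdot\theta=0$ iff $\om{supp}(\theta)\cap\om{supp}(\tau)=\emptyset$'' is false for arbitrary $\theta\in\Z^n$ (take $\tau=(1,1)$, $\theta=(1,-1)$); it holds only because roots of $\Phi$ have all coordinates of one sign, and you should restrict the claim accordingly.
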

\begin{proof} Note first that every connected subgraph of a Dynkin graph
of type $ADE$ is again a Dynkin graph of type $ADE$. Let $J_i$ be the
set of vertices for $K_i$ in the decomposition in the statement and put
$S_i = \{\eps_j : j\in J_i \}$. We claim that $S = \cup_i S_i$ is a base
for $\Phi\cap \tau^\perp$. Indeed, $S$ clearly consists of linearly independent
elements and every element $\alpha\in \Phi^+\cap \tau^\perp$
satisfies $\om{supp}(\alpha)\subset J$ so it can be written as a positive
linear integral combination of the elements of $S$. Then, as in the
proof of Lemma \ref{RootSpace-Decomp-Lemma}, the root space decomposition
\[ \Phi\cap \tau^\perp = \Phi_1\cup \cdots \Phi_r  \]
is obtained by decomposing $S$ into minimal pairwise orthogonal sets
$S = \cup_i S_i$ and letting $\Phi_i$ be the subsystem generated
by $S_i$. Importantly, this decomposition $S = \cup_i S_i$ is precisely
the decomposition introduced in the beginning. We conclude that
$\Phi_i$ is the root system associated with the Dynkin graph $K_i$
for each $1\leq i\leq r$. 
\end{proof}

For completeness we also show that the decomposition for an arbitrary
parameter $\tau$ can in fact be put in the above standard form.
Recall that the Weyl group associated with $\Phi$ is the finite group
$W\subset \om{Aut}_\Z(\Z^n)$ generated by the simple reflections
$s_i\colon \Z^n\ra \Z^n$ in the coordinate vectors $\eps_i$ for $1\leq i\leq n$. There is a unique action of $W$ on
$\C^n$ such that $(w\alpha)\cdot \tau = \alpha\cdot (w^{-1}\tau)$
for all $\alpha\in \Z^n$ and $\tau\in \C^n$. This is the complexification
of the dual action, where we identify $(\R^n)^*\cong \R^n$ using the
standard scalar product. 

The following lemma follows essentially
from the proof in  \cite[p.~51]{Humphreys78}, see also \cite[Lemma~7.2]{Crawley98}.

\begin{lemma} For every $\tau\in \C^n$ there exists $w\in W$ such
that $w\tau$ is dominant. \end{lemma}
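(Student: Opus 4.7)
The plan is to adapt the classical proof of dominance in the real setting (see \cite[p.~51]{Humphreys78}) to the complex case, by replacing the usual positive-definite inner product with a $\C$-linear functional $L\colon \C^n\to \C$ and exploiting the fact that the total ordering on $\C$ defined above is additive and preserved by multiplication by positive reals.

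First I would record how the simple reflections act on $\C^n$. Since $s_i$ acts on $\Z^n$ by $v\mapsto v-(v,\eps_i)\eps_i = v-(v\cdot C\eps_i)\eps_i$ and the action on $\C^n$ is its transpose under the standard pairing, a short direct calculation gives
\[ s_i\tau = \tau - \tau_i\cdot C\eps_i, \quad\text{so that}\quad s_i\tau-\tau = -\tau_i\cdot C\eps_i. \]
In particular, $s_i\tau-\tau$ is controlled by the single scalar coordinate $\tau_i$.

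Next I would pick a vector $\eta\in \R^n$ satisfying $\eta\cdot C\eps_i>0$ for every $1\leq i\leq n$. For Dynkin diagrams of type $ADE$ the Cartan matrix $C$ is symmetric positive definite with $C^{-1}$ having strictly positive entries (a standard fact); setting $\eta\coloneqq C^{-1}(1,1,\cdots,1)^T$ therefore produces a real vector with positive entries satisfying $\eta\cdot C\eps_i = (C\eta)_i = 1$ for every $i$. Let $L\colon \C^n\to \C$ be the $\C$-linear functional $L(\sigma)\coloneqq \eta\cdot \sigma$.

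The finishing step uses the finiteness of $W$. The orbit $W\tau$ is finite, so we may choose $w\in W$ such that $L(w\tau)$ is maximal in the given total ordering on $\C$. Writing $\sigma\coloneqq w\tau$, maximality gives $L(s_i\sigma)\leq L(\sigma)$ for each simple reflection $s_i$, and by the formula of the second paragraph $L(s_i\sigma)-L(\sigma) = -\sigma_i(\eta\cdot C\eps_i) = -\sigma_i$. Additivity of the ordering then forces $-\sigma_i\leq 0$, i.e.\ $\sigma_i\geq 0$, for every $i$, so $w\tau$ is dominant. The only step requiring genuine thought is the construction of $\eta$; once a real vector pairing positively with each ``simple coroot'' $C\eps_i$ is in hand, the Weyl-group maximization argument proceeds essentially verbatim from the real case.
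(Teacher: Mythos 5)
Your proof is correct and is essentially the paper's argument: the paper maximizes $\gamma\cdot w\tau$ over the finite Weyl orbit where $\gamma=\tfrac12\sum_{\alpha\in\Phi^+}\alpha$ satisfies $s_i\gamma=\gamma-\eps_i$, i.e.\ $C\gamma=(1,\dots,1)^T$, so your $\eta=C^{-1}(1,\dots,1)^T$ is literally the same vector and the two maximization arguments coincide. (The positivity of the entries of $C^{-1}$ is not actually needed — only that $\eta$ is real with $\eta\cdot C\eps_i>0$.)
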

\begin{proof} Write $\Phi = \Phi^+\cup \Phi^-$ and define
$\gamma = \frac12 \sum_{\alpha\in \Phi^+} \alpha$. By \cite[p.~50]{Humphreys78} one has $s_i(\gamma)=\gamma-\eps_i$ for each $1\leq i\leq n$.
Choose $w\in W$ such that $\gamma\cdot w\tau \geq \gamma\cdot w'\tau$ for
every $w'\in W$ with respect to the total ordering on $\C$. We claim that $\tau'\coloneqq w\cdot \tau$ is dominant.
Indeed, for each $1\leq i\leq n$ it holds true that
\[ \gamma\cdot \tau'\geq \gamma\cdot s_i\tau'
 = s_i\gamma \cdot \tau'=\gamma\cdot \tau'-\eps_i\cdot \tau'\]
or equivalently $\tau'_i=\eps_i\cdot \tau'\geq 0$. This shows that
$w\tau=\tau'$ is dominant.
\end{proof}  

\begin{proposition} \label{Configuration-Prop}
Let $K$ denote the Dynkin diagram associated with the
root system $\Phi\subset \Z^n$. Given $\tau\in \C^n$ let
\[ \Phi\cap \tau^\perp = \Phi_1\cup \cdots \cup \Phi_r \]
be the corresponding decomposition into $ADE$ root systems. Then there
exists a full subgraph $K'\subset K$ and a decomposition 
$K'=K_1\sqcup \cdots \sqcup K_r$ into connected components such that
$\Phi_i$ is isomorphic to the root system associated with $K_i$ for each
$i$.\end{proposition}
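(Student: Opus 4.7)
The plan is to reduce the general case to the dominant case already handled in Lemma \ref{Dynkin-Decomposition-Lemma} using the Weyl group action.

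First I would use the preceding lemma to pick $w\in W$ with $\sigma\coloneqq w\tau\in \C^n$ dominant. The next step is to verify that $w$ induces a bijection $\tau^\perp\cap \Phi \ra \sigma^\perp \cap \Phi$. Since the action of $W$ on $\C^n$ is defined by $(w\alpha)\cdot \tau = \alpha\cdot (w^{-1}\tau)$ for $\alpha\in \Z^n$, a short manipulation gives $\alpha\cdot (w\tau) = (w^{-1}\alpha)\cdot \tau$, so $\alpha\in \tau^\perp$ iff $w\alpha \in \sigma^\perp$. Combined with the fact that the Weyl group preserves $\Phi$, this yields the desired bijection.

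Next I would apply Lemma \ref{Dynkin-Decomposition-Lemma} to $\sigma$: letting $J\subset \{1,\dots,n\}$ be the complement of $\om{supp}(\sigma)$ and $K_J\subset K$ the full subgraph on $J$ with connected components $K_J = K_1\sqcup \cdots \sqcup K_r$, one obtains a decomposition
\[ \sigma^\perp\cap \Phi = \Phi_1'\cup \cdots \cup \Phi_r' \]
in which $\Phi_i'$ is (isomorphic to) the root system associated with $K_i$. Since the simple reflections generating $W$ preserve the Cartan pairing on $\Z^n$, so does $w$; hence $w\colon \Z^n\ra \Z^n$ is a $\Z$-linear isomorphism of root systems from $\tau^\perp\cap \Phi$ onto $\sigma^\perp\cap \Phi$. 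By the uniqueness of the decomposition into irreducible subsystems, $w$ permutes the irreducible components, so pulling back gives
\[ \tau^\perp\cap \Phi = w^{-1}\Phi_1'\cup \cdots \cup w^{-1}\Phi_r', \]
with each $w^{-1}\Phi_i'$ isomorphic as a root system to $\Phi_i'$ and therefore to the root system of $K_i$. After relabelling this is precisely the statement of the proposition.

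The only subtlety is bookkeeping: one must keep the two Weyl group actions (the natural action on $\Z^n$ and the dual action on $\C^n$) straight, and confirm that the preserved pairing is the correct one for transporting the root system structure (the Cartan pairing, not the standard inner product). Once this is sorted out the argument is essentially a one-line corollary of Lemma \ref{Dynkin-Decomposition-Lemma} and the preceding Weyl group lemma, so there is no serious obstacle.
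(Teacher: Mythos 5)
Your argument is correct and is essentially the same as the paper's: choose $w\in W$ with $w\tau$ dominant, use the defining relation of the dual action to see that $w$ restricts to an isomorphism of root systems $\tau^\perp\cap\Phi\ra (w\tau)^\perp\cap\Phi$ preserving the irreducible decomposition, and then apply Lemma \ref{Dynkin-Decomposition-Lemma} to the dominant parameter. The bookkeeping point you flag (dual action on $\C^n$ versus the action on $\Z^n$, and the Cartan pairing as the transported structure) is handled the same way in the paper.
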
 
\begin{proof} By the previous lemma there exists a Weyl transformation
$w\in W$ such that $w\tau\in \C^n$ is dominant. From the relation
$\tau \cdot \alpha = w\tau \cdot w\alpha$ we deduce that the isomorphism
$w\colon \Phi\ra \Phi$ restricts to an isomorphism $\tau^\perp\cap \Phi \ra (w\tau)^\perp\cap \Phi$. As this is an isomorphism of root systems,
it preserves the decomposition into irreducible components. The result
therefore follows from Lemma \ref{Dynkin-Decomposition-Lemma}
as $w\tau$ is dominant. \end{proof}

The final ingredient needed to complete the proof of Theorem \ref{Bordism-Theorem} is contained in the following proposition. We use the notation
$B_r(x)\subset \C^2$ and $\ovl{B}_r(x)\subset\C^2$ for the open and
closed ball, respectively, with center $x\in \C^2$ and radius $r$. 

\begin{proposition} \label{Structure-Infty-Prop}  
Let $Q$ be an extended Dynkin quiver with minimal imaginary root $\delta$.
Let $\Gamma\subset \om{SU}(2)$ be the finite subgroup associated with
the underlying extended Dynkin graph under the McKay correspondence.
Let $\lambda\in \C^{n+1}$ be a parameter with $\lambda\cdot \delta=0$.
Then there is an open subset $U\subset \MC{M}_{(0,\lambda)}^{\om{reg}}(Q,\delta)$ with compact complement in $\MC{M}_{(0,\lambda)}(Q,\delta)$ and a diffeomorphism $\phi\colon U\ra (\C^2-\ovl{B}_R(0))/\Gamma$. 
Moreover, $\phi^{-1}((\C^2-B_{R'}(0))/\Gamma)$ is closed in $\MC{M}_{(0,\lambda)}(Q,\delta)$ for each $R'>R$. \end{proposition}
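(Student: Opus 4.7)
The strategy is to pull back the standard ALE end through a smooth resolution. Pick $\xi_\R \in D_\delta$ avoiding every wall $D_\theta$ for $\theta \in R_+(\delta)$, set $\wt{\xi} = (\xi_\R, \lambda)$, and let $X \coloneqq \MC{M}_{\wt{\xi}}(Q,\delta)$. By Theorem \ref{Smooth-Quotient-Theorem}, $X$ is a smooth $4$-dimensional hyper-K\"{a}hler manifold, and by Kronheimer's construction \cite{Kronheimer89} it is an ALE space asymptotic to $\C^2/\Gamma$. Concretely there is a compact $K_X \subset X$, an $R_0 > 0$, and a diffeomorphism $\psi\colon X - K_X \ra (\C^2 - \ovl{B}_{R_0}(0))/\Gamma$; pulling back the radius function, we obtain a continuous proper function $\rho\colon X \ra [0,\infty)$ that agrees with $|\psi(\cdot)|$ on $X - K_X$ and takes values $\leq R_0$ on $K_X$. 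In particular $\{\rho > R\} \subset X - K_X$ and $\psi$ sends $\{\rho > R\}$ diffeomorphically onto $(\C^2 - \ovl{B}_R(0))/\Gamma$ for every $R \geq R_0$.

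By Theorem \ref{Resolution-Sing-Theorem}, there is a proper holomorphic map $\pi\colon X \ra \MC{M}_{(0,\lambda)}(Q,\delta)$ restricting to a biholomorphism over the regular set. Combining Theorem \ref{SemiSimpClass} with Proposition \ref{Semi-Simp-Quiver-Eq-Prop}, the singular locus $\MC{M}^{\om{sing}}_{(0,\lambda)}(Q,\delta)$ is finite, so $S \coloneqq \pi^{-1}(\MC{M}^{\om{sing}}_{(0,\lambda)}(Q,\delta))$ is compact. Using properness of $\rho$, choose $R \geq R_0$ large enough that $S \subset \{\rho < R\}$. Then $\{\rho > R\} \subset X - S$, so $\pi$ restricts to a biholomorphism onto $U \coloneqq \pi(\{\rho > R\}) \subset \MC{M}^{\om{reg}}_{(0,\lambda)}(Q,\delta)$; this set is open because $\pi|_{X - S}$ is a biholomorphism onto $\MC{M}^{\om{reg}}_{(0,\lambda)}(Q,\delta)$, so $U$ is the image of an open set under a biholomorphism. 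Define
\[ \phi \coloneqq \psi \circ (\pi|_{\{\rho > R\}})^{-1}\colon U \ra (\C^2 - \ovl{B}_R(0))/\Gamma, \]
which is a diffeomorphism since both factors are. The complement $\MC{M}_{(0,\lambda)}(Q,\delta) - U$ is closed (complement of the open set $U$) and contained in the compact set $\pi(\{\rho \leq R\})$, hence compact.

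For the final assertion, observe that on $\{\rho > R\} \subset X - K_X$ the function $\rho$ coincides with $|\psi|$, so
\[ \phi^{-1}((\C^2 - B_{R'}(0))/\Gamma) = \pi(\{\rho \geq R'\}) \]
for every $R' > R$. The set $\{\rho \geq R'\}$ is closed in $X$ by continuity of $\rho$, and since $\pi$ is a proper continuous map between locally compact Hausdorff spaces it is a closed map; therefore $\pi(\{\rho \geq R'\})$ is closed in $\MC{M}_{(0,\lambda)}(Q,\delta)$, as claimed. The main technical input is the ALE asymptotic structure on $X$ due to Kronheimer; once this is granted, the remainder is routine bookkeeping with the properness of $\pi$ and the finiteness of the singular locus established in earlier sections.
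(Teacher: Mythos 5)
Your proposal is correct and follows essentially the same route as the paper: pass to a smooth deformation $\MC{M}_{\wt{\xi}}(Q,\delta)$ with generic real parameter, use Kronheimer's identification of it with the minimal resolution of $\C^2/\Gamma$ to get the end structure, push the end down through the resolution map $\pi$ of Theorem \ref{Resolution-Sing-Theorem} after enlarging the radius to clear the compact preimage of the finite singular set, and deduce the closedness claim from properness of $\pi$. The only cosmetic difference is that the paper phrases the end structure via the proper map $\hat{\pi}\colon \wt{X}\ra \C^2/\Gamma$ rather than your radius function $\rho$.
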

\begin{remark} 
The final assertion is included to explicitly state that there
are no limit points in $\MC{M}_{(0,\lambda)}(Q,\delta)$ as
$x\in (\C^2-\ovl{B}_R(0))/\Gamma$ tends to $\infty$.
\end{remark}  
\begin{proof} Choose a parameter $\zeta \in \R^{n+1}$ satisfying
$\zeta\cdot \delta=0$ and $\zeta\cdot \theta\neq 0$ for each $\theta\in R_+(\delta)$ (defined in Theorem \ref{Smooth-Quotient-Theorem}) and
put $\xi = (0,\lambda)$ and $\wt{\xi}=(\zeta,\lambda)$. To simplify the
notation write
\[ \wt{X}=\MC{M}_{\wt{\xi}}(Q,\delta) \;\; \mbox{ and } \;\;
   X = \MC{M}_\xi(Q,\delta)  .\] 
Then according to Theorem \ref{Resolution-Sing-Theorem} 
there is a holomorphic map
$\pi\colon \wt{X}\ra X$ which is a resolution of singularities.
Furthermore, by Kronheimer's result mentioned in the introduction
\cite[Corollary~3.12]{Kronheimer89}, the smooth
$4$-dimensional hyper-K\"{a}hler manifold $\wt{X}$ is diffeomorphic
to the minimal resolution of the quotient singularity $\C^2/\Gamma$.
We may therefore assume that there is a continuous proper map
$\hat{\pi}\colon \wt{X}\ra \C^2/\Gamma$ that restricts to a diffeomorphism
$\hat{\pi}^{-1}((\C^2-\{0\})/\Gamma)\cong (\C^2-\{0\})/\Gamma$.
The situation is summarized in the following diagram
\[ \begin{tikzcd} X & \wt{X} \arrow{l}[swap]{\pi} \arrow{r}{\hat{\pi}} &
\C^2/\Gamma  \end{tikzcd} .\]
Since the open sets $\hat{\pi}^{-1}(B_R(0)/\Gamma)$ for $1<R<\infty$
cover $\wt{X}$ and $\pi^{-1}(X^{\om{sing}})$ is compact, there exists
an $R$ such that $\pi^{-1}(X^{\om{sing}})\subset \hat{\pi}^{-1}(B_R(0)/\Gamma)$.
Hence, 
\[ V\coloneqq \hat{\pi}^{-1}((\C^2-\ovl{B}_R(0))/\Gamma)\subset \pi^{-1}(X^{\om{reg}}),  \]
and as $\hat{\pi}$ is proper $X-V =\hat{\pi}^{-1}(\ovl{B}_R(0)/\Gamma)$ is
compact. The biholomorphism $\pi\colon \pi^{-1}(X^{\om{reg}})\cong X^{\om{reg}}$ therefore maps $V$ onto an open subset $U\subset X^{\om{reg}}$. The composition of the restrictions $\pi^{-1}\colon U\ra V$ and 
$\hat{\pi}\colon V\ra (\C^2-\ovl{B}_R(0))/\Gamma$ gives the required
diffeomorphism $\phi\colon U\cong (\C^2-\ovl{B}_R(0))/\Gamma$. Finally,
 \[ \phi^{-1}(\C^2-B_{R'}(0)))/\Gamma   = \pi(\hat{\pi}^{-1}(\C^2-B_{R'}(0))/\Gamma) \] 
is closed in $X$ for each $R'>R$ because $\hat{\pi}$ is continuous and
$\pi$ is a closed map (as it is proper and $X$ is locally compact Hausdorff).
\end{proof}

\begin{proof}[Proof of Theorem \ref{Bordism-Theorem}] Let
$\Gamma_0,\Gamma_1,\cdots,\Gamma_q\subset \om{SU}(2)$ be finite subgroups
and let $K_i$ denote the Dynkin diagram associated with $K_i$ for each
$0\leq i\leq q$. Assume that $K'\coloneqq  K_1\sqcup K_2 \sqcup \cdots \sqcup K_q$ can be realized
as a full subgraph of $K_0$. Identify the vertex set of $K_0$ with
$\{1,2,\cdots,n\}$ for some $n\in \N$ and let $J\subset \{1,\cdots,n\}$
be the vertices of the subgraph $K'$. Let $\Phi\subset \Z^n$ be the
root system associated with $K$ and specify $\tau\in \C^n$ by
$\tau_j = 1$ if $j\notin J$ and $\tau_j=0$ otherwise. Then $\tau$ is
dominant and $\om{supp}\tau$ is complementary to $J$. By Lemma \ref{Dynkin-Decomposition-Lemma} we have a root space decomposition
\begin{equation} \label{FinalProofEq}
\Phi\cap \tau^\perp = \Phi_1\cup \cdots \cup  \Phi_q,
\end{equation}
where $\Phi_i$ is the $ADE$ root system associated with
the Dynkin graph $K_i$ for each $1\leq i\leq q$. 

Let $Q$ be an extended Dynkin quiver with underlying extended Dynkin graph
corresponding to $\Gamma_0$ under the McKay correspondence (i.e., $\wt{K_0}$).
We identify the set of vertices with $\{0,1,\cdots,n\}$ such that the minimal
imaginary root is given by $(1,d)\in \Z^{n+1}$ where $d\in \Phi\subset \Z^n$
is the maximal positive root. Then $\lambda \coloneqq (-d\cdot \tau,\tau)\in \C^{n+1}$ satisfies $\lambda\cdot \delta=0$. Set $X\coloneqq \MC{M}_{(0,\lambda)}(Q,\delta)$. Then, according to Theorem \ref{Singularity-Theorem},
we may write $X^{\om{sing}}=\{x_1,x_2,\cdots,x_q\}$ and for each $1\leq i\leq q$ there is an open neighborhood $x_i\subset V_i\subset X$ and a homeomorphism
$\phi_i\colon V_i\ra B_r(0)/\Gamma_i$, for some fixed $r$ independent of $i$.
Furthermore, each $\phi_i$ restricts to a diffeomorphism away from the
singular point. Next, by Proposition \ref{Structure-Infty-Prop} there is an open subset $U'\subset X^{\om{reg}}$ with $X-U'$ compact and a diffeomorphism $\phi_0\colon U'\cong (\C^2-\ovl{B}_{R'}(0))/\Gamma_0$ for some $R'>0$. In addition, $\phi_0^{-1}((\C^2-B_{R}(0))/\Gamma)$ is closed in $X$ for each $R>R'$.

For part (i) we already know that $X^{\om{reg}}$ is a smooth hyper-K\"{a}hler
$4$-manifold. The space $X$ is connected by Lemma \ref{Connectivity}
and, in view of the above local models around the
singularities, it is clear that $X^{\om{reg}}=X-\{x_1,\cdots,x_q\}$ is
connected as well. 

For part (ii) and (iii) fix $R>R'$ and let $C\subset X$ be the closed 
subset $\phi^{-1}((\C^2-B_{R}(0))/\Gamma)$. Since $C\subset X^{\om{reg}}$
and $X$ is Hausdorff, we may assume after possibly shrinking the $V_i$ (and hence $r>0$) that the open sets $V_1,V_2,\cdots,V_q$ are pairwise disjoint
and that $V_i\cap C=\emptyset$ for each $i$. Put 
\[ U_0 \coloneqq \phi^{-1}((\C^2-\ovl{B}_R(0))/\Gamma)\subset X^{\om{reg}}
\; \mbox{ and } \; U_i \coloneqq V_i-\{x_i\}\subset X^{\om{reg}}, \; 
1\leq i\leq q. \]
Then the open subset $U_0,U_1,U_2,\cdots,U_q$ are pairwise disjoint, the complement of their union is compact in $X^{\om{reg}}$, and we
have diffeomorphisms $\phi_0\colon U_0\cong (\C^2-\ovl{B}_R(0))/\Gamma$
and $\phi_i\colon U_i\cong (B_r(0)-\{0\})/\Gamma$ for $1\leq i\leq q$.
We now decrease $r$ and increase $R$ slightly to ensure that each
$\phi_i$ extends over a slightly bigger open set for each $0\leq i\leq q$.  
The proof of part (ii) is completed by composing $\phi_0$ with the evident
diffeomorphism $(\C^2-\ovl{B}_R(0))/\Gamma \cong (R,\infty)\times S^3/\Gamma\cong (0,\infty)\times S^3/\Gamma$ and by composing $\phi_i$ with
the diffeomorphism
\[ (B_r(0)-\{0\})/\Gamma_i \cong (0,r)\times S^3/\Gamma_i \cong (0,\infty)\times S^3/\Gamma_i, \] 
where the final diffeomorphism includes a time reversal, for each $1\leq i\leq q$. Finally, $Y=X^{\om{reg}}-\cup_{i=0}^qU_i$ is compact a manifold with
boundary components $S^3/\Gamma_i$, $0\leq i\leq q$, 
because we arranged that $\phi_i$ actually
extends to a diffeomorphism $\phi_i'\colon U_i'\cong (-t_0,\infty)\times S^3/\Gamma_i$ for some $t_0>0$ for each $0\leq i\leq q$. This completes
the verification of part (iii) and hence the proof.   
\end{proof}

\printbibliography
\end{document}